\theoremstyle{plain}
\newtheorem{theorem}{Theorem}[section]
\newtheorem{corollary}[theorem]{Corollary}
\newtheorem{proposition}[theorem]{Proposition}
\theoremstyle{definition}
\newtheorem{definition}[theorem]{Definition}
\theoremstyle{remark}
\newtheorem{remark}[theorem]{Remark}
\newtheorem{example}[theorem]{Example}
\newtheorem*{ak}{Acknowledgements}
\newcommand{\form}{\alpha}
\DeclareSymbolFont{AMSb}{U}{msb}{m}{n}
\DeclareMathSymbol{\N}{\mathalpha}{AMSb}{"4E}
\DeclareMathSymbol{\R}{\mathalpha}{AMSb}{"52}
\DeclareMathSymbol{\Z}{\mathalpha}{AMSb}{"5A}
\DeclareMathSymbol{\D}{\mathalpha}{AMSb}{"44}
\DeclareMathSymbol{\s}{\mathalpha}{AMSb}{"53}
\newcommand{\sF}{\scriptscriptstyle{F}}
\newcommand{\sC}{\scriptscriptstyle{C}}
\newcommand{\sB}{\scriptscriptstyle{B}}
\newcommand{\sX}{\scriptscriptstyle{X}}
\newcommand{\sM}{\scriptscriptstyle{M}}
\newcommand{\sN}{\scriptscriptstyle{N}}
\newcommand{\sK}{\scriptscriptstyle{K}}
\newcommand{\sZ}{\scriptscriptstyle{Z}}
\newcommand{\sscr}{\scriptscriptstyle}
\newcommand{\mms}{(X,\de_{{\scriptscriptstyle{X}}},\m_{{\scriptscriptstyle{X}}})}
\DeclareMathOperator{\tr}{tr}
\DeclareMathOperator{\vol}{vol}
\DeclareMathOperator{\supp}{supp}
\DeclareMathOperator{\de}{d}
\DeclareMathOperator{\M}{M}
\DeclareMathOperator{\X}{X}
\DeclareMathOperator{\F}{F}
\DeclareMathOperator{\m}{m}
\DeclareMathOperator{\ric}{ric}
\DeclareMathOperator{\diam}{diam}
\DeclareMathOperator{\Ent}{Ent}
\title{Lagrangian calculus for nonsymmetric diffusion operators}
\author{Christian Ketterer}
\email{christian.ketterer@math.uni-freiburg.de}
\begin{document}
\maketitle
\begin{abstract}
We characterize lower bounds for the Bakry-Emery Ricci tensor of nonsymmetric diffusion operators
by convexity 
of entropy on the $L^2$-Wasserstein space, and
define a curvature-dimension condition for general metric measure spaces together with a square integrable $1$-form in the sense of \cite{giglinonsmooth}.
This extends the Lott-Sturm-Villani approach for lower Ricci curvature bounds of metric measure spaces. In generalized smooth context, consequences are new Bishop-Gromov estimates, pre-compactness under measured Gromov-Hausdorff convergence, and a Bonnet-Myers theorem that
generalizes previous results by Kuwada \cite{kuwadamaximaldiameter}. We show that $N$-warped products together with lifted vector fields satisfy the curvature-dimension condition. For smooth Riemannian manifolds we derive an evolution variational inequality and contraction estimates for 
the dual semigroup of nonsymmetric diffusion operators. Another theorem of Kuwada \cite{kuwadaduality, kuwadaspacetime} yields Bakry-Emery gradient estimates.
\end{abstract}

\tableofcontents
\section{Introduction}
\noindent
In this article we present a Lagrangian approach for studying possibly nonsymmetric diffusion operators.
For instance, we consider operators of the form $L=\Delta+\alpha$ where $\Delta$ is the Laplace-Beltrami operator of a compact smooth Riemannian manifold $(M,g_{\sM})$ and $\alpha$ is a smooth $1$-form on $M$.
Then the Bakry-Emery $N$-Ricci tensor associated to $L$ is defined by
\begin{align*}
\ric_{\sM,\alpha}^{\sN}=\ric_{\sM}-\nabla^s\alpha-\textstyle{\frac{1}{N}}\alpha\otimes \alpha
\end{align*}
for $N\in (0,\infty]$
where $2\nabla^s\form(v,w)=\nabla_v\alpha+\nabla_w\form$ denotes the symmetric derivation of $\form$ with respect to the Levi-Civita connection $\nabla$ of $M$. 
Note $\ric_{\sM,\alpha}^{\sN}$ is also meaningful for $N\leq 0$ but we will not consider these cases.

If $\form=-df$, 
$L$ is the diffusion operator of the canonical symmetric Dirichlet form associated to 
the smooth metric measure space $(M,g_{\sM},\m_{\sM})$ with $\m_{\sM}=e^{-f}\vol_{\sM}$,
and $\ric_{\sM,-df}^{\sN}$ is understood as the Ricci curvature of $(M,g_{\sM},\m_{\sM})$.
In celebrated articles by Lott, Sturm and Villani \cite{lottvillani, stugeo1, stugeo2} - built on previous results in \cite{ottovillani, cms, sturmrenesse} - a definition of lower Ricci curvature bounds for 
general metric measure spaces in terms of convexity properties of entropy functionals on 
the $L^2$-Wasserstein space was introduced. In smooth context these definitions are equivalent to lower bounds for the Bakry-Emery tensor provided $\alpha$ is exact.

For diffusion operators where $\alpha$ is not necessarily exact such a geometric picture was missing. 
Though the operator $L$ yields a bilinear form, in general this form is not symmetric and therefore cannot arise as
Dirichlet form of a metric measure space.
Nevertheless, there are numerous results dealing with probabilistic, analytic and geometric properties of $L$ under lower bounds on 
$\ric_{\sM,\alpha}^{\sN}$. The results are very similar to properties that one derives for symmetric operators with lower bounded Ricci curvature, e.g. 
\cite{kuwadamaximaldiameter, kuwadaspacetime, wang}.

In this article we derive a geometric picture associated to the diffusion operator $L=\Delta + \form$ for general $1$-forms $\form$ in the spirit of the work by Lott, Sturm and Villani. 
We characterize lower bounds on $\ric_{\sM,\alpha}^{\sN}$ in terms of convexity for line integrals along $L^2$-Wasserstein geodesics. 
Moreover, for generalized smooth metric measures spaces (Definition \ref{gsmms}) we impose the following definition. For simplicity, in this introduction we assume $N=\infty$. 
We will say $(X,\de_{\sX},\m_{\sX})$ together with a $1$-form $\alpha$ satisfies the curvature-dimension condition $CD(K,\infty)$ if and only if for every pair $\mu_0,\mu_1\in\mathcal{P}^2(\m_{\sX})$ there
 exists an $L^2$-Wasserstein geodesic $\Pi$ such that 
\begin{align*}
\Ent(\mu_t)-\phi_{t}(\Pi)\leq (1-t)\Ent(\mu_0)&+t\left[\Ent(\mu_1)-\phi_{1}(\Pi)\right]
\\
&\hspace{1cm}
-\frac{1}{2}Kt(1-t)KW_2(\mu_0,\mu_1)^2,
\end{align*}
where $\phi_t(\Pi)=\int \int_0^t\form(\dot{\gamma}(s))dsd\Pi(\gamma)$ and $\int_0^t\alpha(\dot{\gamma}(s))ds$ denotes the line integral of $\alpha$ along $\gamma$.
For the case $N<\infty$ corresponding definitions are made in Definition \ref{defA} and Definition \ref{defB}.
In particular, we emphasize that Definition \ref{defB} is also meaningful in the class of general metric measure spaces
together with $L^2$-integrable $1$-forms in the sense of \cite{giglinonsmooth}. 
However, in this article we will only study the generalized smooth case.

We prove several geometric consequences: Generalized Bishop-Gromov estimates, 
pre-compactness under Gromov-Hausdorff convergence, and a generalized Bonnet-Myers Theorem.
The latter generalizes a result of Kuwada in \cite{kuwadamaximaldiameter} - even 
for smooth ingredients. 
Then, we show that the condition $CD(K,N)$ is stable under $N$-warped product constructions. This also includes so-called euclidean $N$-cones and $N$-suspensions.

In the last section we introduce the notion of $EVI_{\sK}$-flows that arise naturally on generalized smooth metric measure spaces together with a $1$-form satisfying
a curvature-dimension condition. More preciely, if $P_t$ is the semigroup associated to the operator $L=\Delta+\alpha$ on a smooth Riemannian manifold
$M$ such that $(M,g_{\sM},\vol_{\sM},\alpha)$ satisfies $CD(K,\infty)$, and if 
$\mathcal{H}_t$ is the dual flow acting on probability measures, then it is an absolutely continuous curve in $L^2$-Wasserstein space and for any probability measure $\mu$, $\mathcal{H}_t\mu$ satisfies the following inequality
\begin{align*}
\frac{1}{2}\frac{d}{ds}W^2_2(\mathcal{H}_s\mu,\nu)+\frac{K}{2}W^2_2(\mathcal{H}_s\mu,\nu)\leq \int_0^1\int \form(\dot{\gamma})d\Pi^s(\gamma)dt + \Ent(\nu)-\Ent(\mathcal{H}_s(\mu))
\end{align*}
provided standard regularity assumption for the corresponding heat kernel (Proposition \ref{propA}).
$\nu$ is any absolutely continuous probability measure, and $\Pi^s$ is the $L^2$-Wasserstein geodesic between $\mathcal{H}_s\mu$ and $\nu$.  
This yields a contraction estimate for $\mathcal{H}_t$ that is again equivalent to the Bakry-Emery conditon for $P_t$ by \cite{kuwadaduality}.
Therefore, we obtain the following theorem.
\begin{theorem}
Let $(M,g_{\sM})$ be a compact smooth Riemannian manifold, and let $\alpha$ be a smooth $1$-form. We denote with $(M,\de_{\sM},\vol_{\sM})$ the corresponding metric measure space, 
and let $P_t$ and $\mathcal{H}_t$ be as in the section 7.
Then, the following statements are equivalent.
\begin{itemize}
\smallskip
 \item[(i)]$\ric_{\sM,\alpha}^{\infty}\geq K$, 
\smallskip
 \item[(ii)]$(M,\de_{\sM},\vol_{\sM},\alpha)$ satisfies the condition $CD(K,\infty)$,
\smallskip
 \item[(iii)]For every $\mu\in\mathcal{P}^2(X)$ $\mathcal{H}_t\mu$ is an $EVI_{K,\infty}$-flow curve starting in $\mu$,
\smallskip
 \item[(iv)]$\mathcal{H}_t$ satisfies the contraction estimate in corollary \ref{contractionxxx},
\smallskip
 \item[(v)]$P_t$ satisfies the condition $BE(K,\infty)$.
\end{itemize}
\end{theorem}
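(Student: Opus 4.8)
The plan is to prove the cycle of implications $(i)\Rightarrow(ii)\Rightarrow(iii)\Rightarrow(iv)\Rightarrow(v)\Rightarrow(i)$. Most of the argument is a matter of assembling tools already available: the characterization of lower Bakry-Emery bounds by entropy convexity with line-integral correction proved earlier in the paper, Proposition \ref{propA}, and two classical inputs, namely the Bochner formula for $L=\Delta+\alpha$ and the duality of \cite{kuwadaduality} between $L^2$ gradient estimates and $L^2$-Wasserstein contraction. The scheme actually delivers the short equivalences $(i)\Leftrightarrow(v)$ and $(i)\Leftrightarrow(ii)$ on the nose as well.

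For $(v)\Rightarrow(i)$ I would begin from the splitting of the iterated carr\'e du champ of $L=\Delta+\alpha$. A direct computation with the Bochner identity shows that for every $f\in C^\infty(M)$
\begin{align*}
\Gamma_2(f)=|\nabla^2 f|^2+\ric_{\sM,\alpha}^{\infty}(\nabla f,\nabla f),
\end{align*}
because the first-order perturbation $\alpha(\nabla\,\cdot\,)$ contributes to $\Gamma_2(f)$ exactly the term $-\nabla^s\alpha(\nabla f,\nabla f)$. Hence $BE(K,\infty)$, i.e.\ $\Gamma_2(f)\ge K\,|\nabla f|^2$ for all $f$, is equivalent to $|\nabla^2 f|^2+\ric_{\sM,\alpha}^{\infty}(\nabla f,\nabla f)\ge K|\nabla f|^2$; evaluating this at a fixed point $p$ for a test function with prescribed gradient and vanishing Hessian at $p$ forces $\ric_{\sM,\alpha}^{\infty}\ge K$, while the converse is immediate from $|\nabla^2 f|^2\ge 0$. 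For $(i)\Rightarrow(ii)$ I would observe that $(M,\de_{\sM},\vol_{\sM})$ is a generalized smooth metric measure space in the sense of Definition \ref{gsmms}, that any two measures in $\mathcal{P}^2(M)$ are joined by an $L^2$-Wasserstein geodesic (McCann's theorem), and then invoke the characterization, proved earlier, of the bound $\ric_{\sM,\alpha}^{\infty}\ge K$ by the entropy inequality with correction $\phi_t$ along $L^2$-Wasserstein geodesics that defines $CD(K,\infty)$; this is precisely $CD(K,\infty)$ for $(M,\de_{\sM},\vol_{\sM},\alpha)$.

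The implication $(ii)\Rightarrow(iii)$ is Proposition \ref{propA}: on a compact smooth manifold the heat kernel of $L$ satisfies the regularity hypotheses required there, so for every $\mu$ the curve $\mathcal{H}_t\mu$ is $L^2$-Wasserstein absolutely continuous and satisfies the evolution variational inequality with the $\phi$-correction, which is exactly the assertion that it is an $EVI_{K,\infty}$-flow curve starting at $\mu$. For $(iii)\Rightarrow(iv)$ I would run the standard Gronwall argument: fix $\mu,\nu\in\mathcal{P}^2(M)$, apply the $EVI_{K,\infty}$ inequality to the curve $\mathcal{H}_\bullet\mu$ with reference point $\mathcal{H}_s\nu$ and to the curve $\mathcal{H}_\bullet\nu$ with reference point $\mathcal{H}_s\mu$, and add the two inequalities at equal times. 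The two entropy differences cancel, and so do the two line-integral corrections, since the $L^2$-Wasserstein geodesic from $\mathcal{H}_s\nu$ to $\mathcal{H}_s\mu$ is the time reversal of the one from $\mathcal{H}_s\mu$ to $\mathcal{H}_s\nu$ and the line integral of a $1$-form changes sign under reversal. What remains is a Gronwall differential inequality for $s\mapsto W_2^2(\mathcal{H}_s\mu,\mathcal{H}_s\nu)$ with rate $K$, and its integration is the contraction estimate of Corollary \ref{contractionxxx}.

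Finally, for $(iv)\Rightarrow(v)$ I would use that $\mathcal{H}_t$ is the $\vol_{\sM}$-adjoint of $P_t$ from Section 7 and quote Kuwada's duality \cite{kuwadaduality}: the $L^2$-Wasserstein contraction $W_2(\mathcal{H}_t\mu,\mathcal{H}_t\nu)\le e^{-Kt}W_2(\mu,\nu)$ is equivalent to the gradient estimate $|\nabla P_tf|^2\le e^{-2Kt}P_t(|\nabla f|^2)$, which in turn is equivalent to $BE(K,\infty)$ by the standard argument of differentiating $s\mapsto P_s(|\nabla P_{t-s}f|^2)$. I expect this step to be the main obstacle, because Kuwada's theorem is stated for symmetric diffusion operators: the genuinely new point is to carry the duality through for the nonsymmetric pair $(P_t,\mathcal{H}_t)$, checking that $\mathcal{H}_t$ maps probability measures to probability measures with enough regularity and that the coupling used in the duality still yields the correct one-sided bound when the generator fails to be self-adjoint. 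This is exactly where compactness of $M$ and smoothness of $g_{\sM}$ and $\alpha$ enter in an essential way, and handling it carefully is the crux of the proof.
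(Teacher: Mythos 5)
Your proposal is correct and follows essentially the same route as the paper: the theorem there is assembled from Theorem \ref{maintheorem} for (i)$\Leftrightarrow$(ii), Proposition \ref{propA} for (ii)$\Rightarrow$(iii), the Gronwall/time-reversal cancellation argument of Corollary \ref{contractionxxx} for (iii)$\Rightarrow$(iv), Kuwada's duality for (iv)$\Rightarrow$(v), and the classical Bakry--Emery (Bochner/$\Gamma_2$) argument for (v)$\Rightarrow$(i), exactly as you outline. Your explicit $\Gamma_2$ computation simply spells out what the paper cites as classical, and your worry about nonsymmetry in Kuwada's duality is not an obstruction, since that duality is formulated for general Markov semigroups and their dual action on measures without a symmetry assumption.
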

\begin{ak}
This work was partly done while the author was in residence at the Mathematical Sciences Research Institute in Berkeley, California during the Spring 2016 semester, supported
by the National Science Foundation. I want to thank the organizers
of the Differental Geometry Program and MSRI for providing great environment for research. 
\end{ak}

\section{Preliminaries}
\noindent
\paragraph{\textbf{Metric measure spaces}}
Let $(X,\de_{\sX})$ be a complete and separable metric space, and let $\m_{\sX}$ be a locally finite Borel measure. 
We call $(X,\de_{\sX},\m_{\sX})$ a metric measure space.
The case $\m_{\sX}(X)=0$ is excluded.
The space of constant speed geodesics $\gamma:[0,1]\rightarrow X$ is denoted with $\mathcal{G}(X)$, 
and it is equipped with the topology of uniform convergence. $e_t:\gamma\mapsto \gamma(t)$ denotes the evaluation map at time $t$ that is continuous.
\smallskip\\
The $L^2$-Wasserstein space of probability measures with finite second moment is denoted with 
$\mathcal{P}^2(X)$, and $W_2$ is the $L^2$-Wasserstein distance. 
$\mathcal{P}^2_c(X)$ and $\mathcal{P}^2(\m_{\sX})$ denote the subset of compactly supported probability measures and
the family of $\m_{\sX}$-absolutely continuous probability measures, respectively.
A coupling or plan between probability measures $\mu_0$ 
and $\mu_1$ is a probability measure $\pi\in \mathcal{P}(X^2)$ such that $(p_i)_{\star}\pi=\mu_i$ 
where $(p_i)_{i=0,1}$ are the projection maps. A coupling $\pi$ is optimal if 
$
\int_{X^2}\de(x,y)^2d\pi(x,y)=W_2(\mu_0,\mu_1)^2.
$
Optimal couplings exist, and if an optimal coupling $\pi$ is induced by a map $T:Y\rightarrow X$ via 
$(T,\mbox{id}_{\sX})_{\star}\mu_0=\pi$ where $Y$ is a measurable subset of $X$, we say $T$ is an optimal map.
\smallskip\\
A probability measure $\Pi\in \mathcal{P}(\mathcal{G}(X))$ is called an optimal dynamical coupling if $(e_0,e_1)_{\star}\Pi$ is an optimal coupling between its marginal distributions. 
Let $(\mu_t)_{t\in[0,1]}$ be an $L^2$-Wasserstein geodesic in $\mathcal{P}^2(X)$. We say an optimal dynamical coupling $\Pi$ is a lift of $\mu_t$ if $(e_t)_{\star}\Pi=\mu_t$ for every $t\in [0,1]$.
If $\Pi$ is the lift of an $L^2$-Wasserstein geodesic $\mu_t$, we call $\Pi$ itself an $L^2$-Wasserstein geodesic.
We say $\Pi$ has bounded compression if there exists a constant $C:=C(\Pi)$ such
that $(e_t)_{\star}\Pi\leq C(\Pi)\m_{\sX}$ for every $t\in [0,1]$. 
\smallskip\\
We say that a metric measure space $\mms$ is \textit{essentially non-branching} if for any optimal dynamical coupling $\Pi$ there exists $A\subset \mathcal{G}(X)$ such that $\Pi(A)=1$ and for all $\gamma, \gamma'\in A$ we have that
$
\gamma(t)=\gamma'(t) \mbox{ for all }t\in [0,\epsilon] \mbox{ and for some }\epsilon>0\ \mbox{implies} \ \gamma=\gamma'.
$
For instance a metric measure space satisfying a Riemannian curvature-dimension condition $RCD(K,N)$ in the sense of \cite{erbarkuwadasturm, giglistructure} is essentially non-branching \cite{rajalasturm}.
\smallskip\\
If we assume that for $\m_{\sX}\otimes\m_{\sX}$-almost every pair $(x,y)\in X^2$ there exists a unique geodesic $\gamma_{x,y}\in\mathcal{G}(X)$ between $x$ and $y$
then by measurable selection there exists a measurable map $\Psi:X^2\rightarrow \mathcal{G}(X)$ with $\Psi(x,y)=\gamma_{x,y}$. 
For $\mu\in\mathcal{P}^2(X)$ and $A\subset X$ Borel with $\m_{\sX}(A)>0$ we set $\mathcal{M}_{\mu,A}=\Psi_{\star}(\mu\otimes\m_{\sX}|_{A})$, and for $x_0\in X$ we set $\mathcal{M}_{\delta_{x_0},A}=\mathcal{M}_{{x_0},A}$.
In this case $\Pi_{x_0,A}:=\m_{\sX}(A)^{-1}\mathcal{M}_{x_0,A}$ is the unique optimal dynamical plan between $\delta_{x_0}$ and $\m_{\sX}(A)^{-1}\m_{\sX}|_{A}$.
Again, the family of $RCD$-spaces is a class that satisfies this property \cite{giglirajalasturm}.
\smallskip
\begin{definition}[Generalized smooth metric measure spaces]\label{gsmms} We say
$(X,\de_{\sX},\m_{\sX})$ is a \textit{generalized smooth metric measure space}
if there exists an open smooth manifold $M_{\sX}=M$, and a Riemannian metric $g_{\sM}$ on $M$ with induced distance function $\de_{\sM}$ 
such that the metric completion of the metric space $(M,\de_{\sM})$ is isometric to $(X,\de_{\sX})$, 
and for any optimal dynamical plan $\Pi\in\mathcal{P}(\mathcal{G}(X))$ such that $(e_t)_{\star}\Pi=\mu_t$ is a geodesic and $\mu_0\in\mathcal{P}^2(\m_{\sX})$
we have that 
\begin{align}\label{transportcondition}
\Pi(S_{{\sscr{\Pi}}})=0 \ \mbox{ where }\ S_{{\sscr \Pi}}:=\left\{\gamma\in\mathcal{G}(X):\exists t\in (0,1)\mbox{ s.t. }\gamma(t)\in X\backslash M\right\}.
\end{align}
In particular, $\mu_t(e_t(S_{{\sscr \Pi}}))=0$, and if we choose the constant geodesic $\Pi$ with $(e_t)_{\star}\Pi=\m_{\sX}(K)^{-1}\m_{\sX}|_{K}$ for all $t\in [0,1]$ 
where $K\subset X$ is any measurable set of finite $\m_{\sX}$-measure, 
one gets that $K\cap X\backslash M$ is of $\m_{\sX}$-measure $0$. We call $M$ the set of regular points in $X$. 
\end{definition}
\begin{remark}
The condition (\ref{transportcondition}) yields that $\mms$ is essentially non-branching and that 
for $\m_{\sX}\otimes\m_{\sX}$-almost every pair $(x,y)\in X^2$ there exists a unique geodesic $\gamma_{x,y}\in\mathcal{G}(X)$ between $x$ and $y$. Moreover, 
for each pair $\mu_0,\mu_1\in\mathcal{P}^2_c(\m_{\sX})$ there is a unique dynamcial optimal coupling $\Pi$ such that 
$\Pi(\mathcal{G}(M))=1$ where $\mathcal{G}(M)$ is the space of geodesic in $M$, $(e_t)_{\star}\Pi\in \mathcal{P}^2(\m_{\sX})$,
and $\Pi$ is induced by a map. To see this note that $\mu_i(M)=1, i=0,1$ and transport geodesics are contained in $M$. Then, since one can choose an exhaustion of $M$ by 
compact sets, we can assume that $\mu_0$ and $\mu_1$ are compactly supported in $M$. Then, the claim follows from statements in \cite{cms} and since geodesics are unique.
\end{remark}
\noindent
Examples of generalized smooth metric measure spaces in the sense of the previous definition are Riemannian manifolds with boundary that are geodesically convex, cones, suspensions \cite{bastco} and
warped products \cite{ketterer}. Moreover, in an upcoming paper of the author with Ilaria Mondello, it will be shown that stratified spaces are generalized smooth provided certain assumptions on tangent cones at singular points.
This result will also show that orbifolds are generalized smooth.
\smallskip
\paragraph{\textbf{$1$-forms and vector fields}} Assume $\mms$ is a generalized smooth metric measure space.
A $1$-form $\alpha$ is a measurable map $\alpha:X\rightarrow T^*M$ with $\alpha(x)=T_x^*M$.
We say $\alpha\in L^p_{loc}(\m_{\sX},TM^*)$ for $p\in[1,\infty)$ if 
$
\left\|\alpha\right\|_{L^{p}(\m_{\sX},K)}^p=\int_{K\cap M} |\alpha|^p_{\sM}d\m_{\sX}
$ is finite
where $|\alpha|_{\sM}^2={g^*_{\sM}(\alpha,\alpha)}$ and $K\subset X$ compact. 
\smallskip\\
Similar, we can consider measurable and $L^p$-integrable vector fields on $X$.
Note that a vector field $Z$ on $M$ yields a $1$-form $\alpha$ via $\alpha=\langle Z,\cdot\rangle$. In the context of generalized smooth metric measures space 
this is the natural isomorphism between vectorfields and $1$-forms, and we will often switch between these viewpoints.
If $\Pi$ is an optimal dynamical coupling with bounded compression, the line integral
$$\phi_t(\gamma):=\phi^{\alpha}_t(\gamma):=\int_0^t\alpha(\dot{\gamma})(\tau)d\tau$$ 
exists $\Pi$-almost surely, and it does not depend on the parametrization of $\gamma$ up to changes of orientation. 
Moreover, for any $L^2$-Wasserstein geodesic $\Pi$ with bounded compression, we set $\phi_t(\Pi)=\int \phi_t(\gamma) d\Pi(\gamma)$.
\smallskip
\paragraph{\textbf{The case of arbirtrary metric measure spaces}}
Let $\phi:X\rightarrow \mathbb{R}\cup \left\{\pm\infty\right\}$ be any function. The Hopf-Lax semigroup $Q_t:X\rightarrow \mathbb{R}\cup \left\{-\infty\right\}$ is defined by
$
Q_t\phi(x)=\inf_y\frac{1}{2t}\de(x,y)^2+\phi(y).
$
$Q_1(-\phi)$ is the $c$-transform of $\phi$. We say a function $\phi$ is $c$-concave if there exist $v:X\rightarrow \mathbb{R}\cup \left\{\pm\infty\right\}$ such that $\phi=Q_1v$.
If $X$ is compact, by Kantorovich duality for any pair $\mu_0,\mu_1\in\mathcal{P}^2(\m_{\sX})$ with bounded densities there exist a Lipschitz function $\phi$ such that 
\begin{align}\label{phi}
W_2(\mu_0,\mu_t)^2=\int Q_t\phi d\mu_1-\int\phi d\mu_0=\int_0^t\int\frac{d}{ds}Q_s\phi\big|_{s=t} \rho_t d\m_{\sX} dt 
\end{align}
for any geodesic $t\mapsto \mu_t$ with bounded compression. For instance, see \cite{giglihancontinuity}.
\smallskip\\
If we follow the approach of Gigli in \cite{giglinonsmooth}, there is also a well-defined notion of $L^p(\m_{\sX})$-integrable $1$-form $\form$ for general metric measure spaces $\mms$, and one can define
the dual coupling $\form(\nabla f):X\rightarrow \mathbb{R}$ as measurable function on $X$ where $f$ is a Sobolev function. Note that in this context $\nabla f$ does not necessarily exist.
The notion of line integral along a geodesic is more subtle, but if we consider a $L^2$-Wasserstein geodesic $\Pi\in\mathcal{P}(\mathcal{G}(X))$ that has bounded compression, then we can define
$
\bar{\phi}_t(\Pi)=-\int_0^t\int \form(\nabla Q_s\phi)\rho_s d\m_{\sX}ds
$
where $Q_t\phi$ is a Kantorovich potential for $\Pi$, and $\rho_t$ is the density of $(e_t)_{\star}\Pi$ .
Since $\rho_t$ and $\nabla Q_t\phi$ are bounded, $\bar{\phi}_t(\Pi)$ is well-defined if $|\form|$ is $\m_{\sX}$-integrable. Note that in a smooth context $\dot{\gamma}(t)=-\nabla Q_t\phi|_{\gamma(t)}$, and 
therefore in smooth context we have \begin{align*}
\bar{\phi}_t(\Pi)&=\int_0^t\int \form(\nabla Q_s\phi|_x)\rho_s(x) d\m_{\sX}(x)ds= \int_0^t \int\form(\nabla Q_s\phi|_{\gamma(s)})d\Pi(\gamma)ds\\
&= \int \int_0^t\form(\nabla Q_s\phi|_{\gamma(s)})dsd\Pi(\gamma)= \int\phi_t(\gamma)d\Pi(\gamma)=\phi_t(\Pi).
\end{align*}
In the following we just write $\alpha_t(\Pi)$ for $\bar{\alpha}_t(\Pi)$.
Also note, that in general there is no identification between $1$-forms and vectorfields. 
\smallskip
\paragraph{\textbf{Entropy functionals}}
For $\mu\in\mathcal{P}^2(X)$ we define the Boltzmann-Shanon entropy by
\begin{align*}
\Ent(\mu):=\int \log\rho d\mu\ \mbox{ if }\ \mu=\rho\m_{\sX}\mbox{ and } (\rho\log\rho)_+ \mbox{ is }\m_{\sX}\mbox{-integrable},
\end{align*}
and $\Ent(\mu)=+\infty$ otherwise. 
Given a number $N\geq 1$, we define the $N$-R\'eny entropy functional $S_{\sN}:\mathcal{P}^2(X)\rightarrow (-\infty,0]$ with respect to $\m_{\sX}$ by
\begin{align*}
S_{\sN}(\mu):=-\int\rho^{1-\frac{1}{N}}(x)d\m_{\sX}
\end{align*}
where $\rho$ denotes the density of the absolutely continuous part in the Lebesgue decomposition of $\mu$. 
In the case $N=1$ the $1$-R\'eny entropy is $-\m_{\sX}(\supp\rho)$. 
If $\m_{\sX}$ is finite, then 
\begin{align*}
-\m_{\sX}(X)^{\frac{1}{N}}\leq S_{\sN}(\cdot)\leq 0
\end{align*} and 
$\Ent(\mu)=\lim_{N\rightarrow \infty}N(1+S_{\sN}(\mu))$.
Moreover, if $\m_{\sX}$ is finite and $N>1$, then $S_{\sN}$ is lower semi-continuous. If $\m_{\sX}$ is $\sigma$-finite one has to assume an exponential growth condition \cite{agmr} to guarantee lower semi continuity.
\medskip\\
If there is a $1$-form $\form$, we also define $S^{\form}_{\sN,t}:\mathcal{P}(\mathcal{G}(X))\rightarrow (-\infty,0]$ by
\begin{align*}
S^{\form}_{\sN,t}(\Pi):=-\int\rho_t^{-\frac{1}{N}}(\gamma_t)e^{\frac{1}{N}\phi_t(\gamma)}d\Pi(\gamma)
\ \ \mbox{ if $(e_t)_{\star}\Pi=\rho_t\m_{\sX}$}
\end{align*}
and $0$ otherwise. If $\form=0$, then $S^{\form}_{\sN,t}(\Pi)=S_{\sN}((e_t)_{\star}\Pi)$.
\medskip
\paragraph{\textbf{Distortion coefficients}}
For two numbers $K\in\mathbb{R}$ and $N\geq 1$ we define 
\begin{align*}
(t,\theta)\in [0,1]\times (0,\infty)\mapsto\sigma_{\sK,\sN}^{(t)}(\theta)=
\begin{cases} 
\frac{\sin_{\sK/\sN}(t\theta)}{\sin_{\sK/\sN}(\theta)} \ & \ \mbox{ if } \sin_{\sK/N}(x)>0 \mbox{ for }x\in (0,\theta],\\
\infty \ & \ \mbox{ otherwise}.
                               \end{cases}
\end{align*}
$\sin_{\sK/\sN}$ is the solution of the initial value problem 
\begin{align*}
u''+\textstyle{\frac{K}{N}}u=0, \ \ u(0)=0 \ \& \ u'(0)=1.
\end{align*}
The modified distortion coefficients for number $K\in\mathbb{R}$ and $N>1$ are given by 
\begin{align*}
(t,\theta)\in [0,1]\times (0,\infty)\mapsto \tau_{\sK,\sN}^{(t)}(\theta)=\begin{cases} \theta\cdot\infty \ & \ \mbox{ if }K>0 \mbox{ and }N=1,\\                                                                          
t^{\frac{1}{N}}\left[\sigma_{\sK/(\sN-1)}^{(t)}(\theta)\right]^{1-\frac{1}{N}} \ & \ \mbox{ otherwise}.
                                                                         \end{cases}
\end{align*}
\section{Curvature-dimension condition for nonsymmetric diffusions}
\begin{definition}\label{defA}
Let $(X,\de_{\sX},\m_{\sX})$ be a generalized smooth metric measure space, and let $\alpha$ be an $L^2$-integrable $1$-form. We say $(X,\de_{\sX},\m_{\sX},\alpha)$ satisfies 
the \textit{curvature-dimension condition $CD(K,N)$} for $K\in\mathbb{R}$ and $N\geq 1$ if and only if
for
each pair $\mu_0, \mu_1\in\mathcal{P}_c^2(X,\m_{\sX})$ there exists a dynamical optimal plan $\Pi$ with
\begin{align*}
S_{\sN,t}^{\form}(\Pi)
\leq  -\!\!\int\left[\tau_{\sK,\sN}^{(1-t)}(|\dot{\gamma}|)\rho_0(\gamma_0)^{-\frac{1}{N}}+
\tau_{\sK,\sN}^{(1-t)}(|\dot{\gamma}|)e^{\frac{1}{N}\phi_1(\gamma)}\rho_1(\gamma_1)^{-\frac{1}{N}}\right]d\Pi(\gamma)
\end{align*}
where $\phi_t(\gamma)=\int_0^t\form(\dot{\gamma})(\tau)d\tau$. We call any such $1$-form $\form$ admissible.
\smallskip\\
If we replace $\tau_{\sK,\sN}^{(t)}(\theta)$ by $\sigma_{\sK,\sN}^{(t)}(\theta)$ in the previous definition 
we say $(X,\de_{\sX},\m_{\sX},\alpha)$ satisfies the \textit{reduced curvature-dimension condition} $CD^*(K,N)$.
\end{definition}
\begin{definition}\label{defB}
Let $(X,\de_{\sX},\m_{\sX})$ be a metric measure space, and let $\alpha$ be an $L^2$-integrable $1$-form in the sense of \cite{giglinonsmooth}.
\smallskip\\
We say $(X,\de_{\sX},\m_{\sX},\alpha)$ satisfies the \textit{curvature-dimension condition} $CD(K,\infty)$ if and only if for
each pair $\mu_0, \mu_1\in\mathcal{P}^2(\m_{\sX})$ with bounded densities there exists a geodesic $\Pi$ with bounded compression and a potential $\phi$ as in (\ref{phi}) such that
\begin{align*}
\Ent(\mu_t)-\phi_{t}(\Pi)\leq (1-t)\Ent(\mu_0)&+t\left[\Ent(\mu_1)-\phi_{1}(\Pi)\right]
\\
&\hspace{1cm}
-\frac{1}{2}Kt(1-t)KW_2(\mu_0,\mu_1)^2,
\end{align*}
where $\phi_{t}(\Pi)=\int_0^t\int \alpha(\nabla Q_t\phi)\rho_t d\m_{\sX}dt$ and $\mu_t=(e_t)_{\star}\Pi$. 
Equivalently, the map $t\mapsto 
\Ent(\mu_t)-\phi_{t}(\Pi)$ is $K$-convex.
\smallskip\\
$(X,\de_{\sX},\m_{\sX},\alpha)$ satisfies the \textit{entropic curvature-dimension condition} $CD^e(K,N)$ if and only if for
each pair $\mu_0, \mu_1\in\mathcal{P}^2(\m_{\sX})$ with bounded densities there exists a geodesic $\Pi$ with bounded compression and a potential $\phi$ as in (\ref{phi}) such that
\begin{align*}
U_{\sN}(\mu_t)e^{\frac{1}{N}\phi_{t}(\Pi)}\leq 
\sigma_{\sK/\sN}^{(1-t)}(W_2(\mu_0,\mu_1))U_{\sN}(\mu_0)+
\sigma_{\sK/\sN}^{(1-t)}(W_2(\mu_0,\mu_1))e^{\frac{1}{N}\phi_{1}(\Pi)}U_{\sN}(\mu_1)
\end{align*}
where $U_{\sN}(\mu)=e^{-\frac{1}{N}\Ent(\mu_t)}$ and $\mu_t=(e_t)_{\star}\Pi$. That is the map $t\mapsto 
\Ent(\mu_t)-\phi_{t}(\Pi)$ is $(K,N)$-convex in the sense of \cite{erbarkuwadasturm}.
\end{definition}
\begin{remark}
If we can choose $\form=0$ as an admissible $1$-form, the previous definitions become the ones from \cite{lottvillani, stugeo1,stugeo2, bast, erbarkuwadasturm}.
\end{remark}
\begin{remark} It is easy to prove that 
\begin{itemize}
\smallskip
\item[(i)] $CD(K,N)\ \Longrightarrow \ CD^*(K,N)$, 
\smallskip
\item[(ii)] $CD^*(K,N), CD^e(K,N)\ \Longrightarrow \ CD^*(K',N'), CD^e(K',N')$ \\
for $K'\leq K$ and $N'\geq N$, 
\smallskip
\item[(iii)] 
If $\m_{\sX}$ is finite, then $CD^*(K,N), CD^e(K,N) \Longrightarrow \ CD(K,\infty)$. 
\end{itemize} For instance, compare with similar statements in \cite{stugeo2, erbarkuwadasturm}.
\end{remark}
\noindent
\textit{Definition \ref{defB} makes sense for any possibly non-smooth metric measure space.
But for simplicity, for the rest of the article we always assume that $\mms$ is a generalized smooth metric measure space.
Some of the statements that we prove for generalized smooth metric measure spaces extend to arbitrary metric measure spaces but in general not without additional assumptions.
}
\smallskip\\
Let $\mms$ and $(X',\de_{\sX'},\m_{\sX'})$ be generalized smooth metric measure spaces.
A map $I:\supp\m_{\sX'}\rightarrow X$ is a smooth metric measure space isomorphism
if $I$ is a metric measure space isomorphism, and if $I$ is a diffeomophism between the subsets of regular points $M'$ and $M$.
\begin{proposition}
Let $(X,\de_{\sX},\m_{\sX})=\X$ be a generalized smooth metric measure space, and let $\alpha$ be an $L^2$-intergrable $1$-form. Assume $(\X,\alpha)$
satisfies the condition $CD(K,N)$. Then the following properties hold.
\begin{itemize}
\smallskip
 \item[(i)] For $\eta,\beta >0$ define the generalized smooth metric measures space \linebreak $(X,\eta\de_{\sX},\beta\m_{\sX})=:\X'$. Then $(\X',\alpha)$ satisfies $CD(\eta^{-2}K,N)$.
 \smallskip
 \item[(ii)] For a convex subset $X'\subset X$ define the generalized smooth metric measure space $(X',\de|_{\sX'\times \sX'},\m|_{\sX})=:\X'$. 
 Then $(\X',\alpha|_{X'})$ satisfies $CD(K,N)$.
 \smallskip
 \item[(iii)]
 Let $\X'$ be a generalized smooth metric measure space, and let $I:X'\rightarrow X$ be a smooth metric measure space isomorphism. Then $(\X',I^{\star}\alpha)$ satisfies the condition $CD(K,N)$.
\end{itemize}
\end{proposition}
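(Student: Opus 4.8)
The plan is to verify each of the three invariance properties by directly unwinding Definition \ref{defA}, checking how the ingredients appearing in the $CD(K,N)$ inequality transform, and then rewriting the conclusion in the new space. In each case the key objects to track are: the set of optimal dynamical plans, the densities $\rho_0,\rho_1$ of the endpoints with respect to the relevant reference measure, the speeds $|\dot\gamma|$, the distortion coefficients $\tau^{(t)}_{\sK,\sN}$, and the line-integral term $\phi_t(\gamma)=\int_0^t\alpha(\dot\gamma)(\tau)\,d\tau$. The guiding observation is that $\alpha$ enters the inequality \emph{only} through $\phi_t$, which is a line integral and hence reparametrization-invariant up to orientation, so scaling the metric or restricting to a convex subset leaves it essentially unchanged, while a smooth isomorphism transports it via pullback exactly as one expects.

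For (i), the metric measure space $(X,\eta\de_{\sX},\beta\m_{\sX})$ has the same geodesics as $(X,\de_{\sX},\m_{\sX})$ up to the constant rescaling of speed: if $\gamma$ has $\de_{\sX}$-speed $|\dot\gamma|$ then it has $\eta\de_{\sX}$-speed $\eta|\dot\gamma|$, and $W_2^{\eta\de}=\eta W_2^{\de}$. The density of a measure with respect to $\beta\m_{\sX}$ is $\beta^{-1}$ times its density with respect to $\m_{\sX}$, so each term $\rho_i(\gamma_i)^{-1/N}$ picks up a factor $\beta^{1/N}$, and $S^{\alpha}_{\sN,t}$ likewise scales by $\beta^{1/N}$; this common factor cancels. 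The line integral $\phi_t(\gamma)$ is unchanged because it does not see the conformal factor $\eta$ (speed up by $\eta$, but the parameter interval is rescaled accordingly — more cleanly, $\phi_t$ is defined via the parametrization-independent line integral of $\alpha$, a one-form on $M$, which is metric-independent). Finally one checks the distortion coefficients: $\tau^{(t)}_{\sK,\sN}(\theta)$ depends on $K$ and $\theta$ only through $\sin_{K/N}$, and $\sin_{(\eta^{-2}K)/N}(\eta\theta)=\eta\,\sin_{K/N}(\theta)$, so $\tau^{(t)}_{\eta^{-2}K,\sN}(\eta|\dot\gamma|)=\eta\,\tau^{(t)}_{\sK,\sN}(|\dot\gamma|)$; the overall factor $\eta$ is harmless since the inequality is homogeneous once $\beta^{1/N}$ has cancelled — actually one must be slightly careful and note the $\eta$ factor appears on the right side only and can be absorbed because $|\dot\gamma|$ itself rescales; I would write this out carefully to confirm the inequality is preserved with the stated constant $\eta^{-2}K$.

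For (ii), the point is that a convex subset $X'\subset X$ (convex in the sense that geodesics between points of $X'$ stay in $X'$) inherits all structure: every $W_2$-geodesic between measures supported in $X'$ is a $W_2$-geodesic in $X$ supported, at each time, in $X'$; the density with respect to $\m_{\sX}|_{X'}$ agrees $\m_{\sX}$-a.e.\ on $X'$ with the density with respect to $\m_{\sX}$; speeds, $W_2$-distances, distortion coefficients and line integrals are all computed intrinsically and agree. Thus the $CD(K,N)$ plan $\Pi$ furnished in $X$ for $\mu_0,\mu_1\in\mathcal P^2_c(X',\m_{\sX}|_{X'})\subset\mathcal P^2_c(X,\m_{\sX})$ restricts to a valid plan in $X'$ for $\alpha|_{X'}$, and one must only observe that the admissibility/inequality for $\alpha$ transfers verbatim. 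For (iii), a smooth metric measure space isomorphism $I$ is an isometry of the completions and a diffeomorphism on regular points; it therefore maps geodesics to geodesics, optimal dynamical plans to optimal dynamical plans, pushes forward $\m_{\sX'}$ to $\m_{\sX}$ preserving densities, and preserves speeds and $W_2$-distances. The pullback one-form $I^\star\alpha$ satisfies $(I^\star\alpha)(\dot\gamma)=\alpha(\dot{(I\circ\gamma)})$ pointwise on $M'$, hence $\phi^{I^\star\alpha}_t(\gamma)=\phi^{\alpha}_t(I\circ\gamma)$, and $S^{I^\star\alpha}_{\sN,t}(\Pi)=S^{\alpha}_{\sN,t}(I_\star\Pi)$. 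Applying the $CD(K,N)$ inequality for $(\X,\alpha)$ to $I_\star\mu_0, I_\star\mu_1$ and pulling the resulting plan back by $I^{-1}$ yields the inequality for $(\X',I^\star\alpha)$ with the same $K,N$.

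The routine-but-only-mild obstacle is bookkeeping in part (i): making sure the $\eta$- and $\beta$-powers land on the correct sides and that the homogeneity of the inequality genuinely absorbs them; everything else is a transparent transport-of-structure argument. One should also note at the outset that in each case the transformed $1$-form is still $L^2$-integrable (scaling and pullback by a diffeomorphism on regular points preserve local $L^2$-integrability since $|I^\star\alpha|_{\sM'}=|\alpha|_{\sM}\circ I$), so that Definition \ref{defA} applies in the new space.
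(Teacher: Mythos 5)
Your overall strategy coincides with the paper's: the author only verifies (iii), via exactly your identity $\phi^{I^{\star}\alpha}_t(\gamma')=\phi^{\alpha}_t(\gamma)$ for $\gamma'=I^{-1}\circ\gamma$, and disposes of (i) and (ii) by appeal to the standard scaling/restriction arguments for $CD(K,N)$ spaces, which is what your parts (i) and (ii) spell out.

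One correction in part (i), precisely at the spot you flagged as needing care: the identity $\sin_{(\eta^{-2}K)/N}(\eta\theta)=\eta\,\sin_{K/N}(\theta)$ is right, but the conclusion $\tau^{(t)}_{\eta^{-2}K,N}(\eta|\dot\gamma|)=\eta\,\tau^{(t)}_{K,N}(|\dot\gamma|)$ is not. The coefficient $\sigma^{(t)}_{K,N}(\theta)$ is a \emph{ratio} of two values of $\sin_{K/N}$, so the factors $\eta$ cancel and one gets the clean invariance
\begin{align*}
\sigma^{(t)}_{\eta^{-2}K,N}(\eta\theta)=\frac{\sin_{(\eta^{-2}K)/N}(t\eta\theta)}{\sin_{(\eta^{-2}K)/N}(\eta\theta)}=\frac{\sin_{K/N}(t\theta)}{\sin_{K/N}(\theta)}=\sigma^{(t)}_{K,N}(\theta),
\end{align*}
and hence $\tau^{(t)}_{\eta^{-2}K,N}(\eta\theta)=\tau^{(t)}_{K,N}(\theta)$ as well. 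So after the common factor $\beta^{1/N}$ from the densities cancels, the $CD(\eta^{-2}K,N)$ inequality in $\X'$ is literally the $CD(K,N)$ inequality in $\X$ term by term; no stray factor of $\eta$ appears and nothing needs to be ``absorbed by homogeneity.'' This matters because the fallback you sketched would not work: a genuine multiplicative $\eta$ on the right-hand side only would strengthen or weaken the inequality (depending on $\eta\gtrless 1$) and could not be argued away, since the inequality is not homogeneous in that sense. With the coefficient identity corrected, your bookkeeping for (i), and the rest of the proposal, are sound.
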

\begin{proof}
We check (iii). We define $\form'= I^{\star}\alpha$ on $I^{-1}(M)$. 
If $\gamma$ is a geodesic in $X$, then $I^{-1}\circ\gamma=\gamma'$ is a geodesic in $X'$. The line integral of $\form'$ along $\gamma'$ is
\begin{align*}
\int_0^1 \form'(\dot{\gamma}') dt = \int_0^1 I^{\star}\form ( DI^{-1}|_{\gamma(t)}\dot{\gamma})dt=\int_0^1 \form(\dot{\gamma})dt.
\end{align*}
Then, the statement follows like similar results for metric measure spaces that satisfy a curvature-dimension condition (for instance see \cite{stugeo2}).
\end{proof}

\begin{theorem}\label{nonbranching}
Let $(X,\de_{\sX},\m_{\sX})=\X$ be a generalized smooth metric measure space, $\alpha$ an $L^2$-integrable $1$-form, and $K\in\mathbb{R}$ and $N> 0$. 
Then the following statements are equivalent:
\begin{itemize}
 \item[(i)] $(\X,\form)$ satisfies $CD^*(K,N)$.
 \smallskip
 \item[(ii)] For each pair $\mu_0,\mu_1\in\mathcal{P}_c^2(\m_{\sX})$ there exists an optimal dynamical plan $\Pi$ with $(e_t)_{\star}\Pi=\mu_t\in\mathcal{P}^2(\m_{\sX})$ such that
\begin{align}\label{something1}
\left[\rho_t(\gamma_t)e^{-\phi_t(\gamma)}\right]^{-\frac{1}{N}}
\geq \sigma_{\sK,\sN}^{(1-t)}(|\dot{\gamma}|)\rho_0(\gamma_0)^{-\frac{1}{N}}+\sigma_{\sK,\sN}^{(1-t)}(|\dot{\gamma}|)\left[e^{-\phi_1(\gamma)}\rho_1(\gamma_1)\right]^{-\frac{1}{N}}
\end{align}
for $t\in[0,1]$ and $\Pi$-a.e. $\gamma\in\mathcal{G}(X)$. $\rho_t$ is the density of $\mu_t$ w.r.t. $\m_{\sX}$.
\smallskip
 \item[(iii)] $(\X,\alpha)$ satisfies $CD^e(K,N)$.
\end{itemize}
Moreover, the condition $CD(K,N)$ is equivalent with (ii) if the coefficients $\sigma_{\sK,\sN}^{\sscr{(t)}}(\theta)$ are replaced by the coefficients $\tau_{\sK,\sN}^{\sscr{(t)}}(\theta)$.
\end{theorem}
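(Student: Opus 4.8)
The plan is to pass everything through the \emph{twisted density} $u_t(\gamma):=\rho_t(\gamma_t)e^{-\phi_t(\gamma)}$, defined whenever $(e_t)_\star\Pi=\rho_t\m_{\sX}$. With this notation one has $-S_{\sN,t}^{\form}(\Pi)=\int u_t(\gamma)^{-\frac1N}\,d\Pi(\gamma)$ and, when the relevant entropies are finite, $\Ent(\mu_t)-\phi_t(\Pi)=\int\log u_t(\gamma)\,d\Pi(\gamma)$; moreover $u_0(\gamma)=\rho_0(\gamma_0)$ because $\phi_0\equiv0$, while $u_1(\gamma)=\rho_1(\gamma_1)e^{-\phi_1(\gamma)}$ and, more generally, $\gamma\mapsto\phi_t(\gamma)$ are fixed $\Pi$-measurable functions. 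In these terms: (ii) is the pointwise inequality $u_t^{-\frac1N}\geq\sigma_{\sK,\sN}^{(1-t)}(|\dot\gamma|)\,u_0^{-\frac1N}+\sigma_{\sK,\sN}^{(t)}(|\dot\gamma|)\,u_1^{-\frac1N}$ holding $\Pi$-a.e.\ for all $t$; (i), i.e.\ $CD^*(K,N)$, is its integrated consequence against $\Pi$, using $e^{\frac1N\phi_1(\gamma)}\rho_1(\gamma_1)^{-\frac1N}=u_1(\gamma)^{-\frac1N}$; and (iii), i.e.\ $CD^e(K,N)$, is the $(K,N)$-convexity of $t\mapsto\int\log u_t\,d\Pi$. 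The point of this reformulation is that $\form$ now enters only through the \emph{fixed} weight $e^{-\phi_t(\gamma)}$ inside $u_t$, equivalently the fixed additive term $\phi_t(\Pi)$ in the entropy, which is untouched when a plan is restricted or disintegrated.

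The implication (ii)$\Rightarrow$(i) is then immediate integration, and the same computation with $\tau_{\sK,\sN}^{(t)}$ in place of $\sigma_{\sK,\sN}^{(t)}$ gives the ``Moreover'' part, namely that the $\tau$-version of (ii) implies $CD(K,N)$. For the reverse implications (i)$\Rightarrow$(ii) and $CD(K,N)\Rightarrow$($\tau$-version of (ii)) I would run the standard localization argument. By the Remark following Definition~\ref{gsmms}, $\X$ is essentially non-branching and, for $\m_{\sX}$-absolutely continuous compactly supported marginals, the optimal dynamical plan is unique; hence if $\Pi$ witnesses $CD^*(K,N)$ (resp.\ $CD(K,N)$) for $\mu_0,\mu_1$ and $A\subset\mathcal{G}(X)$ is Borel with $\Pi(A)>0$, then $\Pi(A)^{-1}\Pi|_A$ is the unique, hence the witnessing, plan between its endpoints, which are again absolutely continuous and compactly supported. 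This yields the functional inequality for $\Pi(A)^{-1}\Pi|_A$ for every such $A$; a measure-differentiation argument, carried out over a countable dense set of times $t$ and then extended by continuity in $t$, upgrades this to the pointwise bound, exactly as in \cite{bast,rajalasturm,erbarkuwadasturm}. The one new observation is that $\gamma\mapsto\phi_t(\gamma)$ is unchanged under restriction, so the line integral is entirely passive in this argument and only $\rho_t$ — now the density of $(e_t)_\star(\Pi(A)^{-1}\Pi|_A)$ — varies, as in the case $\form=0$.

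It remains to establish (ii)$\Leftrightarrow$(iii), which is a statement about the single variable $t$ and the family $\{u_t\}$. This is the $1$-form analogue of the equivalence, valid for essentially non-branching metric measure spaces, between the pointwise density inequality and $CD^e(K,N)$ from \cite{erbarkuwadasturm}: writing $U_{\sN}(\mu_t)e^{\frac1N\phi_t(\Pi)}=e^{-\frac1N(\Ent(\mu_t)-\phi_t(\Pi))}=e^{-\frac1N\int\log u_t\,d\Pi}$, the passage from the pointwise inequality to the entropic one rests on the Jensen-type inequality for the coefficients $\theta\mapsto\sigma_{\sK,\sN}^{(t)}(\theta)$ and on the replacement of the per-geodesic speed $|\dot\gamma|$ by $W_2(\mu_0,\mu_1)$, neither of which interacts with the fixed term $\phi_t$; the converse passage is, once more, a restriction-of-plans argument and is where essential non-branching is used. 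Finally, the marginals in Definition~\ref{defB} are only required to have bounded densities rather than compact support; a routine exhaustion of $M$ by compact sets — using the last assertion of the Remark after Definition~\ref{gsmms}, that $X\setminus M$ meets every set of finite $\m_{\sX}$-measure in an $\m_{\sX}$-null set — reduces to the compactly supported case treated above. The main obstacle throughout is the localization step: one must check carefully that restricting the witnessing plan preserves optimality and, via essential non-branching and uniqueness, the defining inequality for the restricted marginals, and that the line-integral weight plays no active role; with that in hand, everything else follows the classical proofs line by line.
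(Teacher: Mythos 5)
Your proposal is correct and follows essentially the same route as the paper: (ii)$\Rightarrow$(i) by integration, (i)$\Rightarrow$(ii) and (iii)$\Rightarrow$(ii) by the standard localization/restriction argument made legitimate by uniqueness of optimal dynamical plans in the generalized smooth setting (with the line integral $\phi_t$ entering only as a passive weight), and (ii)$\Rightarrow$(iii) via the convexity of $(x,y,\theta)\mapsto\log(\sigma_{\sK,\sN}^{(1-t)}(\theta)e^{x}+\sigma_{\sK,\sN}^{(t)}(\theta)e^{y})$ together with Jensen's inequality, exactly as in \cite{erbarkuwadasturm}. The paper implements the localization through a countable $\cap$-stable generator of the Borel $\sigma$-field and mutual singularity of the pieces rather than your restriction-plus-differentiation phrasing, but this is the same technique and does not change the substance.
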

\begin{proof} First, we observe that in the context of generalized smooth metric measure spaces up to a set measure zero optimal couplings between $\m_{\sX}$-absolutly continuous measures $\mu_0,\mu_1\in\mathcal{P}_c(\m_{\sX})$ are unique
(also compare with the remark after Definition \ref{gsmms}).\smallskip\\
``(i)$\Rightarrow$(ii)'': 
Let $\mu_0,\mu_1\in\mathcal{P}_2(\m_{\sX})$ be with bounded support, and let $\pi\in\mathcal{P}(\mathcal{G}(X))$ be the optimal coupling 
between $\mu_0$ and $\mu_1$. Let $\left\{M_n\right\}_{n\in\mathbb{N}}$ be an $\cap$-stable generator of the Borel $\sigma$-field of $(X,\de_{\sX})$ .
For each $n$ we define a disjoint covering of $X$ of $2^n$ sets by $L_I=\bigcap_{i\in I} M_i \cap \bigcap_{i\in I^c}^{} M_i^c $ where $I\subset \left\{1,\dots,n\right\}$ and $I^c=\left\{1, \dots,n\right\}\backslash I$.
\smallskip\\
We define $B^{\sscr{I},\sscr{J}}=L_{\sscr{I}}\times L_{\sscr{J}}$ and set 
$\pi^{\sscr{I},\sscr{J}}:=\alpha_{\sscr{I},\sscr{J}}^{-1}\pi|_{B^{\sscr{I},\sscr{J}}}$ if $\alpha_{\sscr{I},\sscr{J}}:={\pi(B^{\sscr{I},\sscr{J}})}>0$. 
Then we consider the marginal measures $\mu^{\sscr{I},\sscr{J}}_0=(e_0)_{\star}\pi^{\sscr{I},\sscr{J}}$ and $\mu^{\sscr{I},\sscr{J}}_1=(e_1)_{\star}\pi^{\sscr{I},\sscr{J}}$ that are $\m_{\sX}$-absolutely continuous, and 
$\pi^{\sscr{I},\sscr{J}}$ is the unique optimal coupling. 
Since geodesics are $\m_{\sX}\otimes\m_{\sX}$-almost surely unique, the dynamical optimal plan $\Pi^{\sscr{I},\sscr{J}}=\Psi_{\star}\pi^{\sscr{I},\sscr{J}}$ is the unique optimal dynamical coupling between its endpoints
where $\Psi(x,y)=\gamma_{x,y}\in\mathcal{G}(X)$. Therefore, $\Pi^{\sscr{I},\sscr{J}}$ satisfies the $CD^*$-inequality for every $I$ and $J$. 
In particular, $(e_t)_{\star}\Pi^{\sscr{I},\sscr{J}}=\rho_t^{\sscr{I},\sscr{J}}\m_{\sX}$ is $\m_{\sX}$-absolutely continuous.
Then, we define a dynamical coupling between $\mu_0$ and $\mu_1$ by 
$
\Pi^n:=\sum_{\sscr{I},\sscr{J}\subset\left\{1,\dots,n\right\}}\alpha_{\sscr{I},\sscr{J}}\Pi^{\sscr{I},\sscr{J}}.
$
$\Pi^n$ is optimal since 
\begin{align*}
\pi^n:=(e_0,e_1)_{\star}\Pi_n=\sum_{\sscr{I},\sscr{J}\subset\left\{1,\dots,n\right\}}\alpha_{\sscr{I},\sscr{J}}(e_0,e_1)_{\star}\Pi^{\sscr{I},\sscr{J}}=\sum_{\sscr{I},\sscr{J}\subset\left\{1,\dots,n\right\}}\alpha_{\sscr{I},\sscr{J}}\pi^{\sscr{I},\sscr{J}}=\pi
\end{align*}
is an optimal coupling. Therefore, we can apply Lemma 3.11 in \cite{erbarkuwadasturm}:
Since the measures $\mu_0^{\sscr{I},\sscr{J}}$ for ${\textstyle{I},{J}\subset\left\{1,\dots, 2^n\right\}}$ are mutually singular, 
$\mu_t^{\sscr{I},\sscr{J}}=\rho_t^{\sscr{I},\sscr{J}}d\m_{\sX}$ are mutually singular as well.
\smallskip\\
Now, for $t\in (0,1)$ we consider the measure $\mu_t^n=(e_t)_{\star}\Pi^n$. Since it decomposes into mutually singular, absolutely continuous measures $\mu_t^{\sscr{I},\sscr{J}}$ with densities $\rho_t^{\sscr{I},\sscr{J}}$, 
$\mu_t^n$ is absolutely continuous as well, and by mutual singularity of the measure $\mu_t^{\sscr{I},\sscr{J}}$ its density is $\rho^n_t=\sum\alpha_{\sscr{I},\sscr{J}}\rho_t^{{\sscr{I},\sscr{J}}}$.
Again, since geodesics are $\m_{\sX}\otimes\m_{\sX}$-almost surely unique we have that $\Pi:=\Psi_{\star}\pi=\Psi_{\star}\pi^n=\Pi^n$, and $\rho_t\m_{\sX}=(e_t)_{\star}\Pi=(e_t)_{\star}\Pi^n=\rho^n_t\m_{\sX}$ for every $n\in \mathbb{N}$.
From the $CD^*$-inequality for $\Pi^{\sscr{I},\sscr{J}}$ we have
\begin{align*}
&\int_{L_i\times L_j}\rho_t^{-\frac{1}{N}}(\gamma_{x,y}(t))e^{-\frac{1}{N}\phi_t(\gamma_{x,y})}d\pi(x,y)\\
&=\alpha_{\sscr{I},\sscr{J}}^{1-\frac{1}{N}}\!\!\int(\rho_t^{\sscr{I},\sscr{J}})^{-\frac{1}{N}}(\gamma_{x,y}(t))e^{-\frac{1}{N}\phi_t(\gamma_{x,y})}d\pi^{\sscr{I},\sscr{J}}(x,y)\\
&\geq \alpha_{\sscr{I},\sscr{J}}^{1-\frac{1}{N}}\!\!\int\sigma_{\sK,\sN}^{\sscr{(1-t)}}( |\dot{\gamma}_{x,y}|)(\rho_0^{\sscr{I},\sscr{J}})^{\frac{-1}{N}}(x)+\sigma_{\sK,\sN}^{\sscr{(t)}}( |\dot{\gamma}_{x,y}|)(\rho_1^{\sscr{I},\sscr{J}})^{\frac{-1}{N}}(y)e^{\frac{-1}{N}\phi_1(\gamma_{x,y})}d\pi^{\sscr{I},\sscr{J}}(x,y)\\
&= \int_{L_i\times L_j}\sigma_{\sK,\sN}^{\sscr{(1-t)}}( |\dot{\gamma}_{x,y}|)\rho_0^{-\frac{1}{N}}(x)+\sigma_{\sK,\sN}^{\sscr{(t)}}( |\dot{\gamma}_{x,y}|)\rho_1^{-\frac{1}{N}}(y)e^{-\frac{1}{N}\phi_1(\gamma_{x,y})}d\pi(x,y).
\end{align*}
This holds for every $L_i$ and $L_j$. Since $L_i$ and $L_j$ are mutually disjoint, by summing up the previous inequality holds for $M_i$ and $M_j$ as well. Since the family $\left\{M_j\right\}$ is a generator for the $\sigma$-field, 
we have for $\pi$-almost every $(x,y)\in X\times X$
\begin{align*}\rho_t^{-\frac{1}{N}}(\gamma_{x,y}(t))e^{-\frac{1}{N}\phi_t(\gamma_{x,y})}\geq\sigma_{\sK,\sN}^{\sscr{(1-t)}}( |\dot{\gamma}_{x,y}|)\rho_0^{-\frac{1}{N}}(x)+\sigma_{\sK,\sN}^{\sscr{(t)}}( |\dot{\gamma}_{x,y}|)\rho_1^{-\frac{1}{N}}(y)e^{-\frac{1}{N}\phi_1(\gamma_{x,y})}.
\end{align*}
And since $\Pi=\Psi_{\star}\pi$, this is the claim.
\smallskip\\
For the following recall that in the context of generalized smooth metric measure spaces $$\int_0^1\int\alpha(\nabla Q_t\phi)\rho_t dt=\int \int_0^1\alpha(\dot{\gamma})dt d\Pi$$ where $\Pi$ is an $L^2$-Wasserstein geodesic, $\phi$
is an Kantorovich potential, and
$(e_t)_{\star}\Pi=\rho_t\m_{\sX}$ for every $t\in [0,1]$.
\smallskip\\
``(ii)$\Rightarrow$(iii)'': 
Recall from \cite{erbarkuwadasturm, ketterer5} that $(x,y,\theta)\mapsto G(x,y,\theta)=\log(\sigma_{\sK,\sN}^{\sscr{(t)}}(\theta)e^x+\sigma_{\sK,\sN}^{\sscr{(t)}}(\theta)e^y)$ is convex. Then, 
apply $\log$ to (\ref{something1}) and use Jensen's inequality on the right hand side to obtain the condition $CD^e(K,N)$. 
\smallskip\\
``(iii)$\Rightarrow$(ii)'': This works like in ``(i)$\Rightarrow$(ii)'' where one has to use the convexity of $(x,y,\theta)\mapsto G(x,y,\theta)$ again. See also \cite{erbarkuwadasturm,ketterer5}.
\smallskip\\
``(ii)$\Rightarrow$(i)'': Integrate (\ref{something1}) w.r.t. the optimal dynamical plan $\Pi$.
\smallskip\\
The proof of the equivalence for the condition $CD(K,N)$ is similar.
\end{proof}
\begin{remark}
If we consider $\alpha$ such that $\alpha=-df$ on $M$ for a smooth function $ f:M\rightarrow \mathbb{R}$,
then $$\phi_t(\gamma)=\int_0^t\alpha(\dot{\gamma})dt=f(\gamma(t))-f(\gamma(0))\ \ \& \ \ \phi_1(\gamma)=f(\gamma(1))-f(\gamma(0)),$$ and we can reformulate (\ref{something1}) as 
\begin{align*}
\left[\rho_t(\gamma_t)e^{f(\gamma_t)}\right]^{-\frac{1}{N}}
\geq \tau_{\sK,\sN}^{(1-t)}(|\dot{\gamma}|)\left[\rho_0(\gamma_0)e^{f(\gamma_0)}\right]^{-\frac{1}{N}}+\tau_{\sK,\sN}^{(1-t)}(|\dot{\gamma}|)\left[\rho_1(\gamma_1)e^{f(\gamma_1)}\right]^{-\frac{1}{N}}
\end{align*}
for $\Pi$-a.e. $\gamma$. That is the {condition} $CD(K,N)$ in the sense of \cite{stugeo2} for the metric measure space $(M,\de_{\sX},e^{-f}\vol_{\sX})$.
\end{remark}
\section{The Riemannian manifold situation}
\noindent
In this section we consider a vector field $Z$ rather than a $1$-form $\form$. 
Recall that for smooth metric measure spaces we always can identifiy $Z$ with a $1$-form $\alpha$.
\begin{definition}
Let $(X,\de_{\sX},\m_{\sX})$ be a smooth metric measure space with $(X,\de_{\sX})\simeq(M,\de_{\sM})$ and $\m_{\sX}=\vol_{\sM}$. Let $\nabla$ be the Levi-Civita connection of $g_{\sM}$ and let 
$Z\in L^2(TM)$ be a smooth vector field. We define the 
Bakry-Emery $N$-Ricci tensor for $N\in (n,\infty]$ by
\begin{align*}
\ric_{\sM,\sZ}^{\sN}=\ric_{\sM}-\nabla^sZ-\frac{1}{N-n}Z\otimes Z
\end{align*}
where $\nabla^sZ(v,w)=\frac{1}{2}\left(\langle \nabla_vZ,w\rangle+\langle v,\nabla_wZ\rangle\right)$ and $n=\dim_{\sM}$.
For $N=n$ we define
$$\ric_{\sM,Z}^{\sN}(v):=
\begin{cases}
\ric_{\sM}(v)
-\nabla^{s} Z(v) \ & \ \left\langle Z,v\right\rangle_F=0\\
-\infty \ &\ \mbox{ otherwise}.
\end{cases}$$
For $1\leq N<n$ we define $\ric_{\sM,Z}^{\sN}(v):=-\infty$ for all $v\neq 0$ and $0$ otherwise.
\end{definition}
\begin{theorem}\label{maintheorem}
Let $(X,\de_{\sX},\m_{\sX})$ be a smooth metric measure space
with $(X,\de_{\sX})\simeq(M,\de_{\sM})$ and $\m_{\sX}=\vol_{\sM}$, 
$K\in\mathbb{R}$ and $N\in [1,\infty]$. Let $Z$ be an $L^2$-integrable smooth vector field. Then
$(X,\de_{\sX},\m_{\sX},Z)$ satisfies the condition $CD(K,N)$ if and only if 
\begin{align}\label{assumption}
\ric_{\sM,\sZ}^{\sN}(v)\geq K|v|^2.
\end{align}
Moreover, if $N$ is finite, then $CD(K,N)$ and $CD^*(K,N)$ are equivalent.
\end{theorem}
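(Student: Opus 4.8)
The plan is to invoke Theorem~\ref{nonbranching} to replace the curvature-dimension condition by the pointwise inequality~(ii) along $L^2$-Wasserstein geodesics, and then to recognize that inequality as an integrated form of the Bochner--Riccati comparison attached to $\ric_{\sM,\sZ}^{\sN}$. After reducing to $\mu_0,\mu_1\in\mathcal{P}^2_c(\m_{\sX})$ supported in the regular set $M$ (so that, by the remark following Definition~\ref{gsmms}, the optimal dynamical plan $\Pi$ is unique and induced by a map $T_t=\exp(-t\nabla\varphi)$ for a $c$-concave Kantorovich potential $\varphi$, with $(e_t)_{\star}\Pi=\rho_t\m_{\sX}$ and $\rho_t(T_t(x))\,\mathcal{J}_t(x)=\rho_0(x)$, where $\mathcal{J}_t(x)=\det dT_t(x)$ is the optimal-transport Jacobian), I would fix a transport geodesic $\gamma=\gamma_x$, write $\theta=|\dot{\gamma}|=\de_{\sM}(\gamma_0,\gamma_1)$, and set
\[
\ell(t):=\log\mathcal{J}_t(x)+\phi_t(\gamma),\qquad g(t):=e^{\ell(t)/N}.
\]
Substituting $\rho_t(\gamma_t)=\rho_0(x)/\mathcal{J}_t(x)$ into the reformulation of $CD^{*}(K,N)$, resp.\ $CD(K,N)$, in Theorem~\ref{nonbranching}(ii) and its $\tau$-variant, and dividing through by the positive factor $\rho_0(x)^{-1/N}$, the condition becomes exactly the statement that $g$ lies above the $\sigma_{\sK/\sN}$-, resp.\ $\tau_{\sK,\sN}$-, interpolation of $g(0)=1$ and $g(1)$; for $N=\infty$ it becomes $K$-convexity of $t\mapsto-\ell(t)$, which integrates against $\Pi$ to the entropy inequality of Definition~\ref{defB} since $\Ent(\mu_t)-\phi_t(\Pi)=\int(\log\rho_0(x)-\ell(t))\,d\Pi$.

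The second step is the computation of $\ell''+\tfrac1N(\ell')^2$. Here $\phi_t'=Z(\dot{\gamma}_t)$ and $\phi_t''=\langle\nabla_{\dot{\gamma}}Z,\dot{\gamma}\rangle=\nabla^{s}Z(\dot{\gamma}_t,\dot{\gamma}_t)$ (the antisymmetric part drops out), while the matrix Riccati equation for the optimal-transport flow together with $\tr(A^{2})\ge\tfrac1n(\tr A)^{2}$ (with $n=\dim M$) gives $(\log\mathcal{J}_t)''\le-\tfrac1n((\log\mathcal{J}_t)')^{2}-\ric_{\sM}(\dot{\gamma}_t,\dot{\gamma}_t)$. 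Writing $a=(\log\mathcal{J}_t)'$, $b=Z(\dot{\gamma}_t)$ and maximizing $-\tfrac{a^{2}}{n}+\tfrac{(a+b)^{2}}{N}$ over $a$ (whose value is $\tfrac{b^{2}}{N-n}$ when $N>n$) yields the equivalence
\[
\ell''+\tfrac1N(\ell')^{2}\le-K\theta^{2}\iff\ric_{\sM,\sZ}^{\sN}(\dot{\gamma})\ge K|\dot{\gamma}|^{2}.
\]
For ``$(\ref{assumption})\Rightarrow CD(K,N)$'' one reads this from right to left along $\Pi$-a.e.\ $\gamma$: $\ell''+\tfrac1N(\ell')^{2}\le-K\theta^{2}$ forces $g''+\tfrac KN\theta^{2}g\le0$, hence $g$ dominates the $\sigma$-interpolation, and the passage to the sharp $\tau$-coefficients is the usual refinement obtained by splitting off one ``Euclidean'' direction of $\mathcal{J}_t$ (comparing $\mathcal{J}_t^{1/n}$ as in \cite{stugeo2}); integrating over $\Pi$ against $\rho_0\,d\m_{\sX}$ then recovers Definitions~\ref{defA}/\ref{defB}. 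The cases $N=n$ (equality in Cauchy--Schwarz forces $\langle Z,\cdot\rangle\equiv0$, matching the degenerate $\tau$-coefficient) and $1\le N<n$ (both sides fail identically, since a smooth $n$-manifold never satisfies $CD(K,N)$ with $N<n$) are handled exactly as in the classical weighted case.

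For ``$CD(K,N)\Rightarrow(\ref{assumption})$'' I would localize: fix $x_0\in M$ and $v\in T_{x_0}M$, pick a short geodesic through $x_0$ with velocity $v$, and build test measures $\mu_0^{\varepsilon},\mu_1^{\varepsilon}$ with smooth densities concentrated near $x_0$ and driven by a quadratic potential $\varphi$ with prescribed $\nabla\varphi(x_0)=-v$ and prescribed $\Delta\varphi(x_0)$. Since geodesics near $x_0$ are unique the optimal plan is forced, so feeding this into Theorem~\ref{nonbranching}(ii) and Taylor-expanding in $t$ and in the support size $\varepsilon$ isolates, at second order, $g''(0)+\tfrac KN|v|^{2}g(0)\le0$; as $\Delta\varphi(x_0)$ (equivalently $a=(\log\mathcal{J}_t)'|_{t=0}$) may be chosen freely, we realize the maximizer $a=\tfrac{bn}{N-n}$, and the computation above yields $\ric_{\sM,\sZ}^{\sN}(v)\ge K|v|^{2}$. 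The ``moreover'' follows immediately: the same localization shows $CD^{*}(K,N)\Rightarrow(\ref{assumption})$, and $(\ref{assumption})\Rightarrow CD(K,N)$ has just been established, so $CD(K,N)$ and $CD^{*}(K,N)$ coincide for finite $N$.

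I expect the main obstacle to be making the reduction to this one-dimensional picture rigorous in the generalized-smooth setting: one must invoke condition~(\ref{transportcondition}) to keep $\Pi$-a.e.\ geodesic inside the regular set $M$, justify the Monge--Amp\`ere/Jacobian identity and the differentiation under the integral sign for optimal plans of bounded compression, and, in the converse direction, construct a genuine $c$-concave Kantorovich potential realizing prescribed first and second order data at a point while keeping the induced optimal dynamical plan uniquely determined near that point.
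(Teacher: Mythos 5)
Your argument is correct and follows essentially the same route as the paper: the forward implication via the Riccati/Jacobian comparison along the Monge map $T_t=\exp(-t\nabla\varphi)$ with the line integral $\phi_t$ added and the $\tau$-refinement obtained by splitting off the direction of motion, and the converse via a localized (quadratic) Kantorovich potential realizing the maximizer of $-\tfrac{a^{2}}{n}+\tfrac{(a+b)^{2}}{N}$, which also gives the $CD$/$CD^{*}$ equivalence. One precision worth recording: in the converse you must prescribe the full Hessian $\nabla^{2}\varphi(x_0)=\tfrac{1}{N-n}\langle Z,v\rangle\,\mathrm{Id}$ (so that equality $\tr(U^{2})=\tfrac1n(\tr U)^{2}$ holds at the point), not merely the value of $\Delta\varphi(x_0)$; this isotropic choice is exactly the one made in step 6 of the paper's proof.
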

\begin{proof}\textbf{1.} Assume $\ric^{\sN}_{\sM,Z}\geq Kg_{\sM}$. Let $N<\infty$. The case $N=\infty$ follows by obvious modifications. 
Consider $\mu_0,\mu_1\in\mathcal{P}^2(M)$ 
that are compactly supported. Otherwise 
we can choose compact exhaustions of $M\times M$ and consider the restriction of optimal couplings to these sets.
There exists a $c$-concave function $\phi$ such that $T_t(x)=\exp_x(-t\nabla \phi_x)$ 
is the unique optimal map between $\mu_0$ and $(T_t)_*\mu_0=\mu_t$, and $\mu_t$ is 
the unique $L^2$-Wasserstein geodesic between $\mu_0$ and $\mu_1$ in $\mathcal{P}^2(M)$. 
$\mu_t$ is compactly supported and $\m_{\sX}$-absolutely continuous \cite{cms}.
\medskip\\
The potential $\phi$ is semi-concave, and by the Bangert-Alexandrov theorem the Hessian $\nabla^2\phi(x)$ exists for $\m_{\sX}$-almost every $x\in M$.
Moreover, for $\m_{\sX}$-almost every $x\in M$ the optimal map $T_t$ admits a Jacobian $DT_t(x)$ for every $t\in[0,1]$. $DT_t(x)$ is non-singular for every $t\in [0,1]$, 
and the Monge-Ampere equation 
\begin{align}\label{jacobi}
\rho_0(x)=\det DT_t(x)\rho_t(T_t(x))
\end{align}holds $\m_{\sX}$-almost everywhere.
\medskip\\
\textbf{2.} We pick a point $x\in M$ where $DT_t(x)$ is non-singular and (\ref{jacobi}) holds. 
For an orthonormal basis $(e_i)_{i=1,\dots,n}$ of $T_xM$
\begin{align*}
t\mapsto V_i(t)=DT_t(x)e_i\in T_{\gamma_x(t)}M
\end{align*}
is the Jacobi field along $\gamma_x(t)=\exp_x(-t\nabla\phi|_x)=T_t(x)$ with initial condition $V_i(0)=e_i$ and $V_i'(0)=-\nabla_{e_i}\nabla \phi|_{x}$.
Therefore
\begin{align*}
V_i''(t)+R(V_i(t),\dot{\gamma}(t))\dot{\gamma}(t)=0 \ \ \mbox{ for } i=1,\dots,n.
\end{align*}
We set $\mathcal{A}_t(x)=(V_1,\dots,V_n)$.
Since $DT_t(x)$ is non-singular for every $t\in[0,1]$, 
Riemannian Jacobi field calculus (for instance see \cite{stugeo2}) yields for $t\mapsto y_t=\log \det \mathcal{A}_t(x)$ the differential inequality
\begin{align}\label{firstone}
y_t''\leq -\frac{1}{n}(y'_t)^2-\ric_{\sX}(\dot{\gamma},\dot{\gamma}).
\end{align}
\textbf{3.} Consider the vector field $Z$, the geodesic $t\in[0,1]\rightarrow \gamma_x(t)$, and the corresponding 
line integral $
t\mapsto \phi^{\sZ}_t(\gamma_x)=:\phi_t.
$
We set $\gamma_x=:\gamma$ and compute
\begin{align*}
\phi''_t=\langle \nabla_{\dot{\gamma}}Z|_{\gamma(t)},\dot{\gamma}(t)\rangle+\langle Z|_{\gamma(t)},\nabla_{\dot{\gamma}}\gamma'(t)\rangle
=\langle \nabla_{\dot{\gamma}}Z|_{\gamma(t)},\dot{\gamma}(t)\rangle=\nabla^sZ(\dot{\gamma},\dot{\gamma}).
\end{align*}
In addition with (\ref{firstone}) and  (\ref{assumption}) this yields
\begin{align*}
y_t''+\phi''_t&\leq -\frac{1}{n}(y'_t)^2-\ric_{\sX}(\dot{\gamma},\dot{\gamma})+\nabla^sZ(\dot{\gamma},\dot{\gamma})\\
&\leq - K|\dot{\gamma}|^2-\frac{1}{N-n}Z\otimes Z(\dot{\gamma},\dot{\gamma})-\frac{1}{n}(y'_t)^2\\
&\leq - K|\dot{\gamma}|^2 -\frac{1}{N}\left(y_t'+Z(\dot{\gamma})(t)\right)^2.
\end{align*}
Hence
\begin{align*}
y_t''+\phi''_t+\frac{1}{N}\left(y_t'+\phi'_t\right)^2+K|\dot{\gamma}|^2\leq 0.
\end{align*}
If we set $\mathcal{I}(t)=e^{y_t+\phi_t}$, then we obtain
\begin{align*}
\frac{d^2}{dt^2}\mathcal{I}^{\frac{1}{N}}\leq - \frac{K|\dot{\gamma}|^2}{N}\mathcal{I}^{\frac{1}{N}},
\end{align*}
or equivalently
\begin{align}\label{secondone}
\mathcal{I}^{\frac{1}{N}}_t\geq \sigma_{\sK,\sN}^{(1-t)}(|\dot{\gamma}|)
\mathcal{I}^{\frac{1}{N}}_0+\sigma_{\sK,\sN}^{(t)}(|\dot{\gamma}|)
\mathcal{I}^{\frac{1}{N}}_1.
\end{align}
Note the dependence on $x\in M$. (\ref{secondone}) holds for $\m_{\sX}$-a.e. $x\in M$.
\medskip\\
\textbf{4.} 
By the same computation as in \cite{stugeo2} we can improve (\ref{secondone}) by taking out the direction of motion if $n\geq 2$.
We know that
\begin{align}\label{ricattii}
U'(t)+U^2(t)+R(t)=0
\end{align}
where $U(t)=\mathcal{A}'(t)\mathcal{A}(t)^{-1}$ and $R_{i,j}(t)=\langle R(V_i,\dot{\gamma})\dot{\gamma},V_j\rangle$. From Lemma 3.1 in \cite{CMS2} one sees that $U$ is symmetric. $R$ has the form
\begin{align*}
R(t)=\begin{pmatrix}
     0 & 0\\
     0 & \bar{R}(t)
     \end{pmatrix}
\end{align*}
for an $(n-1)\times (n-1)$-matrix $\bar{R}$. Hence, if $U=(u_{i,j})_{i,j=1,\dots,n}$, we have
\begin{align}\label{kk}
u_{11}'+\sum_{i=1}^n u_{1,i}^2=0.
\end{align}
Moreover, taking the trace in (\ref{ricattii}) yields
\begin{align}\label{anotherequation}
\tr {U}'+\tr {U}^2 + \ric=0.
\end{align}
where $\ric=\ric(\dot{\gamma})$. 
The Jacobi determinant $\mathcal{J}_t=\det\mathcal{A}_t$ satisfies 
$(\log \mathcal{J})'=\mathcal{J}'/\mathcal{J}=\tr U=u_{11}+\sum_{i=2}^nu_{ii}$. 
Therefore if we set $\lambda_t=\int_0^tu_{11}(s)ds$, we have 
\begin{align*}
y_t=\log\mathcal{J}_t={\lambda_t}+{\int_0^t\Big[\sum_{i=2}^nu_{ii}(s)\Big]ds}
\end{align*}
and (\ref{anotherequation}) becomes $y'' + \tr U^2 +\ric=0$.
$\lambda_t$ describes volume distortion in direction of the transport geodesics. 
We remove this part and consider
$
\bar{y}_t:=y_t-\lambda_t.
$
And if we set $\bar{U}=(u_{ij})_{i,j=2,\dots n}$, then $
\bar{y}_t=\int^t_0\tr \bar{U}ds.
$
A computation yields
\begin{align*}
\bar{y}_t''&=y_t''-\lambda_t''= -\tr U^2(t) -\ric(t)+\sum_{i=1}^n u_{1,i}^2(t)\\ &
\leq
-\tr \bar{U}^2(t)-\ric(t)\leq -\frac{1}{n-1}\left(\tr \bar{U}\right)^2-\ric(t)=-\frac{1}{n-1}(\bar{y}_t')^2-\ric(t).
\end{align*}
and note that $\tr\bar{U}(t)=\bar{y}'_t$. Setting $\bar{\mathcal{I}}_t=e^{\bar{y}_t+\phi_t}$ as in \textbf{3.} we also obtain
\begin{align*}
\frac{d^2}{dt^2}\bar{\mathcal{I}}^{\frac{1}{N-1}}\leq -\frac{K|\dot{\gamma}|^2}{N-1}\bar{\mathcal{I}}^{\frac{1}{N-1}} \ \ \m_{\sX}\mbox{-a.e.}
\end{align*}
if $\ric_{\sM,\sZ}^{\sN}(\dot{\gamma}(t))\geq K$.
\medskip\\
\textbf{5.}
Set $e^{\lambda_t}=L_t$. Note that (\ref{kk}) implies $u_{11}'\leq -u_{11}^2$. Then $L_t''\leq 0$. 
By construction $\mathcal{J}_te^{\phi_t}=\mathcal{I}_t=\bar{\mathcal{I}}_tL_t$. Hence (similar as in part (c) of the proof of Theorem 1.7 in \cite{stugeo2}) we obtain
\begin{align*}
\mathcal{I}_t^{\frac{1}{N}}&=\left(\bar{\mathcal{I}}_tL_t\right)^{\frac{1}{N}}=\Big(\bar{\mathcal{I}}_t^{\frac{1}{N-1}}\Big)^{\frac{N-1}{N}}\left(L_t\right)^{\frac{1}{N}}
\geq 
\tau_{\sK,\sN}^{(1-t)}(|\dot{\gamma}|)\mathcal{I}_0^{\frac{1}{N}}+\tau_{\sK,\sN}^{(t)}(|\dot{\gamma}|)\mathcal{I}_1^{\frac{1}{N}}.
\end{align*}
or 
\begin{align*}
\mathcal{J}_t^{\frac{1}{N}}e^{\frac{1}{N}\phi_t}\geq \tau_{\sK,\sN}^{(1-t)}(|\dot{\gamma}|)\mathcal{J}_0^{\frac{1}{N}}+\tau_{\sK,\sN}^{(t)}(|\dot{\gamma}|)e^{\frac{1}{N}\phi_1}\mathcal{J}_1^{\frac{1}{N}}\ \ \m_{\sX}\mbox{-a.e.}
\end{align*}
Consider $\mu_0$, $\mu_1$, $T_t$ and $\mu_t=\rho_td\vol_g$ as in \textbf{1.}. Then for $\m_{\sX}$-a.e. $x\in M$ we have
\begin{align*}
\left[\rho_t(T_t(x))e^{-\phi_t(\gamma_x)}\right]^{-\frac{1}{N}}
&=\rho_0(x)^{-\frac{1}{N}}\left[\mathcal{J}_t(x)^{}e^{\phi_t(\gamma_x)}\right]^{\frac{1}{N}}\\
&\geq \tau_{\sK,\sN}^{(1-t)}(|\dot{\gamma}_x|)\rho_0(x)^{-\frac{1}{N}}\mathcal{J}_0(x)^{\frac{1}{N}}\\
&\hspace{2cm}+\tau_{\sK,\sN}^{(t)}(|\dot{\gamma}_x|)\left[e^{\phi_1(\gamma_x)}\rho_0(x)^{-1}\mathcal{J}_1(x)\right]^{\frac{1}{N}}\\
&=  \tau_{\sK,\sN}^{(1-t)}(|\dot{\gamma}_x|)\rho_0(x)^{-\frac{1}{N}}\\
&\hspace{2cm}+\tau_{\sK,\sN}^{(t)}(|\dot{\gamma}_x|)\left[e^{-\phi_1}\rho_1(T_1(x))\right]^{-\frac{1}{N}}.
\end{align*}
Recall that $\mu_t=(T_t)_{\star}\mu_0$.
$\Pi$ is the unique dynamical optimal coupling between $\mu_0$ and $\mu_1$.
Hence, integration 
with respect to $\mu_0$ yields
\begin{align*}&
S_{\sN,t}^Z(\Pi)=
-\int\left[\rho_t(\gamma_t)e^{-\phi_t(\gamma)}\right]^{-\frac{1}{N}}d\Pi(\gamma)=-\int\left[\rho_t(T_t(x))e^{-\phi_t(\gamma_x)}\right]^{-\frac{1}{N}}d\mu_0(x)\\
&\leq -\int \Big[\tau_{\sK,\sN}^{(1-t)}(|\dot{\gamma}_x|)\rho_0(x)^{-\frac{1}{N}}+\tau_{\sK,\sN}^{(t)}(|\dot{\gamma}_x|)\left[e^{-\phi_1(\gamma_x)}\rho_1(T_1(x))\right]^{-\frac{1}{N}}\Big]d\mu_0(x)\\
&= -\int \Big[\tau_{\sK,\sN}^{(1-t)}(|\dot{\gamma}|)\rho_0(\gamma_0)^{-\frac{1}{N}}+\tau_{\sK,\sN}^{(t)}(|\dot{\gamma}|)e^{-\frac{1}{N}\phi_1(\gamma)}\rho_1(\gamma_1)^{-\frac{1}{N}}\Big]d\Pi(\gamma).
\end{align*}
That is the claim.
\medskip\\
\textbf{6.} ``$\Longleftarrow$'': We argue by contradiction. Assume $(X,\de_{\sX},\m_{\sX},Z)$ satisfies the curvature-dimension condition $CD(K,N)$. We only consider the case $N>n$. 
Since $\sigma_{\sK,\sN}^{\scriptscriptstyle{(t)}}(\theta)\leq \tau_{\sK,\sN}^{\scriptscriptstyle{(t)}}(\theta)$, $(X,\de_{\sX},\m_{\sX},Z)$ satisfies the 
condition $CD^*(K,N)$ as well. Assume there is $v\in T_xM$ such that $\ric_{\sM,\sZ}^{\sN}(v)\leq (K-6\epsilon)|v|^2$. Choose a smooth function $\psi$ such that
\begin{align*}
\nabla \psi(x)=v \ \ \ \& \ \ \ \nabla^2\psi(x)=\frac{1}{N-n}\langle Z,v\rangle|_x I_n.
\end{align*}
We set $\lambda:=\frac{1}{N-n}\langle Z,v\rangle|_x$. It follows $\Delta \psi(x)= - n\lambda$.
We can assume that $\psi$ has compact support in $B_1(x)$, and $\psi$ and $-\psi$ are Kantorovich potentials by replacing $\psi$ by $\theta\psi$ and $v$ by $\theta v$ for a sufficiently small $\theta >0$ 
(we apply Theorem 13.5 in \cite{viltot} to $\psi$ and $-\psi$).
Then the map $T_t(y)=\exp_y(-t\nabla\psi(y))$ for $t\in [-\delta,\delta]$ induces a constant speed $L^2$-Wasserstein geodesic $(\mu_t)_{t\in [-\delta,\delta]}$. 
For this particular $T_t$ we repeat calculations of the previous steps and obtain for $\mathcal{A}'_t\mathcal{A}_t^{-1}={U}_t$
\begin{align*}
\mbox{tr}{U}_t'+\phi_t''=-\mbox{tr}{U}_t^2-\ric(\dot{\gamma}_t,\dot{\gamma}_t)+\nabla^sZ(\dot{\gamma}_t,\dot{\gamma}_t)
\end{align*}
$\m_{\sX}$-a.e.
where $\gamma_t=T_{t}(x)$. Now, we consider $\mu=\m_{\sX}(B_{\eta}(x))^{-1}\m_{\sX}|_{B_{\eta}(x)}$ and the induced Wasserstein geodesic $(T_t)_{\star}\mu=\mu_t$. We set $T_t(y)=\gamma_y(t)$, and note
that $\dot{\gamma}_x(0)=v$.
By continuity of $\ric_{\sM,\sZ}^{\sN}(\cdot)$ we can choose $\eta$ and $\delta>0$ such that 
\begin{align}\label{lolo}
\ric_{\sM,\sZ}^{\sN}(\dot{\gamma}_y(t))\leq (K-5\epsilon)|\dot{\gamma}_y(t)|^2 \mbox{ for }y\in B_{\eta}(x),\ t\in [-\delta,\delta].
\end{align}
Moreover, by continuity we can choose $\eta$ and $\delta>0$ even smaller such that $\frac{1}{2}|v|\leq |\dot{\gamma}_y(t)|$ for $y\in B_{\eta}(x)$ and $t\in [-\delta,\delta]$.
And again by using just continuity of $Z$, $\psi$ and all its first and second order derivatives for $\epsilon'\leq \frac{1}{4}\epsilon|v|^2$ we can choose $\eta>0$ and $\delta>0$ even more smaller such that 
\begin{align*}
&(a)\ |\mbox{tr}U^2_t-n\lambda^2|<\epsilon', \ \ (b)\ \left|\frac{1}{N}(\mbox{tr}U_t+\langle Z,\dot{\gamma}_t\rangle)^2-\frac{1}{N}(n\lambda+\langle Z,\dot{\gamma}_t\rangle)^2\right|<\epsilon',\\
&\ \ \ \ \ \ \ \ \ \ \ \ (c)\ {\textstyle \frac{n}{N(N-n)}}\left|(N-n)\lambda -\langle Z,\dot{\gamma}_t\rangle\right|^2<\epsilon'
\end{align*}
for every $t \in (-\delta,\delta)$ and every $y\in B_{\eta}(x)$. Note that $\mbox{tr}(\nabla^2\psi(x))^2=n\lambda^2$ and $\Delta\psi(x)=n\lambda$.
Then we compute for $y_t=\log\det\mathcal{A}_t$ - omitting the dependency on $y\in B_{\eta}(x)$, and using $\epsilon'\leq \epsilon\frac{1}{4}|v|^2\leq \epsilon|\dot{\gamma}_y(t)|^2$ for any $y\in B_{\eta}(x)$ and each $t\in [-\delta,\delta]$ - 
\begin{align*}
y_t''+\phi_t''=\mbox{tr}{U}_t'+\phi_t''&= -\mbox{tr}U_t^2-\ric_{\sM}(\dot{\gamma}(t),\dot{\gamma}(t))+\nabla^sZ(\dot{\gamma}(t),\dot{\gamma}(t))\\
&\overset{(\ref{lolo})+(a)}{\geq} -\frac{1}{n}\left(n\lambda\right)^2 -(K-4\epsilon)|\dot{\gamma}_t|^2- \frac{1}{N-n}\langle Z,\dot{\gamma}_t\rangle^2
\\
&=-\frac{1}{N}\left({n}\lambda+\langle Z,\dot{\gamma}_t\rangle\right)^2 -(K-4\epsilon)|\dot{\gamma}_t|^2\\
&\hspace{2cm}- \frac{n}{N(N-n)}\left(\langle Z,\dot{\gamma}_t\rangle -{(N-n)}\lambda\right)^2\\
&\overset{(c)}{\geq} -\frac{1}{N}\left({n}\lambda+\langle Z,\dot{\gamma}_t\rangle\right)^2- (K-3\epsilon)\left|\dot{\gamma}_t\right|^2\\
&\overset{(b)}{\geq} -\frac{1}{N}\left(\mbox{tr}U_t+ \phi_t'\right)^2- (K-2\epsilon)\left|\dot{\gamma}_t\right|^2\\
&= -\frac{1}{N}\left(y_t'+ \phi_t'\right)^2- (K-2\epsilon)\left|\dot{\gamma}_t\right|^2\ \ \mbox{ for }t\in[-\delta,\delta]
\end{align*}
where we use $$\frac{1}{N}(a+b)^2+\frac{n}{N(N-n)}\left(b-a\frac{N-n}{n}\right)^2=\frac{1}{n}a^2+\frac{1}{N-n}b^2$$
in the third equality.
Recall $\mbox{tr}U_t=y_t'$. The previous differential inequality is equivalent to 
\begin{align*}
\left[e^{\frac{1}{N}(y_t+\phi_t)}\right]''\geq - \frac{K-2\epsilon}{N}\left|\dot{\gamma}_t\right|^2e^{\frac{1}{N}(y_t+\phi_t)} \ \ \mbox{ on }[-\delta,\delta].
\end{align*}
A reparametrization $s\in[-\frac{1}{2},\frac{1}{2}]\rightarrow -s\delta+s\delta$ yields
\begin{align}\label{lala}
\left[e^{\frac{1}{N}(\hat{y}_t+\hat{\phi}_t)}\right]''\geq - \frac{K-2\epsilon}{N}\left|\dot{\hat{\gamma}}_t\right|^2e^{\frac{1}{N}(\hat{y}_t+\hat{\phi}_t)} \ \ \mbox{ on }[-\textstyle{\frac{1}{2}},\textstyle{\frac{1}{2}}].
\end{align}
where $\hat{y}_t+\hat{\phi}_t={y}_{-\delta t+\delta t}+{\phi}_{-\delta t+\delta t}$, $\hat{\gamma}_t=\exp(-t\delta\nabla\phi)$ and $|\dot{\hat{\gamma}}_t|=2\delta|\dot{\gamma}_t|=L(\hat{\gamma})$.
$\delta\phi$ is a Kantorovich potential for the Wasserstein geodesic $t\in [-\frac{1}{2},\frac{1}{2}]\mapsto \mu_{-\delta t+\delta t}$, and by the previous computation (\ref{lala}) is 
the differential inequality as in step (4) for the potential $\delta \phi$ but with reversed inequalities. 
We obtain (\ref{secondone}) with reverse inequalities and $K$ replaced by $K-2\epsilon$ that contradicts the condition $CD^*(K,N)$.
\end{proof}
\section{Geometric consequences}
\noindent
Consider a generalized smooth metric measure space $(X,\de_{\sX},\m_{\sX})$ with regular set $M$.
In this section we assume that $\vol_{\sM}$ is $\m_{\sX}$-absolutely continuous.
We consider again vector fields instead of $1$-forms.
\noindent
\smallskip\\
Recall the definition of the measure $\mathcal{M}$ from section 2. For any measurable set $A\subset X$ with $\m_{\sX}(A)>0$ and any point $x_0\in X$ we define $\mathcal{M}_{x_0,A}=\Psi_{\star}(\delta_{x_0}\otimes \m_{\sX}|_A)$
where $\Psi(x,y)=\gamma_{x,y}:[0,1]\rightarrow X\in\mathcal{G}(X)$. Since geodesics are $\m_{\sX}\otimes\m_{\sX}$-almost everywhere unique, $\Pi_{x_0,A}=\m_{\sX}(A)^{-1}\mathcal{M}_{x_0,A}$ is the unique optimal dynamical plan between 
$\delta_{x_0}$ and $\m_{\sX}(A)^{-1}\m_{\sX}|_A$.
\smallskip\\
We define for $r\in (0,\infty)$ and $\bar{B}_r(x_0)=\overline{B_r(x_0)}$
$$
v^{\sZ,x_0}(r):=\int e^{\phi(\gamma)}\mathcal{M}_{x_0,\bar{B}_r(x_0)}(d\gamma)
=\int e^{\phi(\gamma_{x,y})}d(\delta_{x_0}\otimes\m_{\sX}|_{\bar{B}_r(x_0)})(x,y)
$$ 
If $Z=0$, then $v^{\sZ,x_0}(r)=\m_{\sX}(\bar{B}_r(x_0))=:v^{x_0}(r)$.
\medskip\\
We say the vector field $Z$ has \textit{locally finite flow} if for any $x_0\in X$ there is $r>0$ such that $v^{\sZ,x_0}(r)<\infty$.
%
If $Z=\nabla f$ for some smooth $f$, $Z$ has locally finite flow if $e^{-f}\m_{\sX}$ is a locally finite measure.
\medskip\\
We define for $r\in (0,\infty)$
$$
s^{\sZ,x_0}(r):=
\limsup_{\delta\rightarrow 0}{\delta}^{-1}\int e^{\phi(\gamma_{x,y})}\delta_x\otimes\m_{\sX}|_{\bar{B}_{r+\delta}(x_0)\backslash B_r(x_0)}.
$$
If $Z=0$, then $s^{\sZ,x_0}(r)=:s^{x_0}(r)$.
\begin{theorem}[Generalized Bishop-Gromov inequality]\label{bg} Let $(X,\de_{\sX},\m_{\sX})=\X$ be a generalized smooth metric measure space, and $Z$ an $L^2$-integrable vector field. Assume
 $(\X,Z)$ satisfies the curvature-dimension $CD(K,N)$ for $K\in \mathbb{R}$ and $N\geq 1$. 
Then each bounded set $X'\subset \supp\m_{\sX}$ has finite $\m_{\sX}$-measure, and either $\m_{\sX}$ is supported by one point or all points and all spheres have mass $0$.
\medskip\\
Moreover, if $N>1$ then for each fixed $x_0\in \supp[\m_{\sX}]\cap M$ and all $0<r<R$ (with $R\leq \pi \sqrt{\textstyle{\frac{N-1}{K}}}$ if $K>0$), we have
\begin{align}\label{spheres}
\frac{s^{\sZ,x_0}(r)}{s^{\sZ,x_0}(R)}\geq \frac{\sin_{\sK/(\sN-1)}^{\sN-1}(r)}{\sin_{\sK/(\sN-1)}^{\sN-1}(R)}
\end{align}
and 
\begin{align}\label{balls}
\frac{v^{\sZ,x_0}(r)}{v^{\sZ,x_0}(R)}\geq \frac{\int_0^r\sin_{\sK/(\sN-1)}^{\sN-1}(t)dt}{\int^R_0\sin_{\sK/(\sN-1)}^{\sN-1}(t)dt}
\end{align}
In particular, if $K=0$, then
\begin{align*}
\frac{s^{\sZ,x_0}(r)}{s^{\sZ,x_0}(R)}\geq \frac{r^{\sN-1}}{R^{\sN-1}}\
\
\&\
\
\frac{v^{\sZ,x_0}(r)}{v^{\sZ,x_0}(R)}\geq \frac{r^N}{R^N}
\end{align*}
for all $R,r>0$ and $x_0$, and the latter also holds for $N=1$ and $K\leq 0$.
\end{theorem}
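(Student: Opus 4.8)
The plan is to adapt Sturm's proof of the generalized Bishop--Gromov inequality in \cite{stugeo2}, the new ingredient being that the vector field enters through the weighted quantities $v^{\sZ,x_0}$ and $s^{\sZ,x_0}$ rather than through the reference measure. First I would dispose of the ``soft'' part: along geodesics contained in a fixed bounded set the line integrals $\phi_t(\gamma)$ are bounded, so the defining inequality of $CD(K,N)$ for $(\X,Z)$ controls the unweighted $N$-R\'enyi entropies up to multiplicative constants $e^{\pm C/N}$; together with local finiteness of $\m_{\sX}$ and essential non-branching of generalized smooth metric measure spaces (the remark after Definition \ref{gsmms}), this gives, exactly as in \cite{stugeo2}, that bounded subsets of $\supp\m_{\sX}$ have finite $\m_{\sX}$-measure and that, unless $\m_{\sX}$ is a Dirac mass, all points and all metric spheres are $\m_{\sX}$-null; in particular $v^{\sZ,x_0}$ is continuous, non-decreasing, finite and vanishes at $0$.

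The heart of the matter is the sphere estimate \eqref{spheres}, which I would prove by transporting a point onto a thin spherical shell. Fix $x_0\in\supp\m_{\sX}\cap M$ and $R>0$ (with $R\leq\pi\sqrt{(N-1)/K}$ if $K>0$), set $A_\delta:=\bar B_{R+\delta}(x_0)\setminus B_R(x_0)$, and consider the optimal dynamical plan $\Pi_{x_0,A_\delta}$ between $\delta_{x_0}$ and $\m_{\sX}(A_\delta)^{-1}\m_{\sX}|_{A_\delta}$, with $\mu_t=(e_t)_\star\Pi_{x_0,A_\delta}=\rho_t\m_{\sX}$. Approximating $\delta_{x_0}$ by $\m_{\sX}(\bar B_\varepsilon(x_0))^{-1}\m_{\sX}|_{\bar B_\varepsilon(x_0)}$ and using uniqueness of optimal plans in the generalized smooth setting, I would pass to the limit $\varepsilon\to0$ in the pointwise $CD(K,N)$ inequality of Theorem \ref{nonbranching} (with $\tau$--coefficients); since $\m_{\sX}(\{x_0\})=0$ the $\rho_0(\gamma_0)^{-1/N}$ term vanishes, and with $\rho_1\equiv\m_{\sX}(A_\delta)^{-1}$ on $A_\delta$ one gets, for $\Pi_{x_0,A_\delta}$-a.e.\ $\gamma$ and $t\in(0,1)$,
\begin{equation*}
\rho_t(\gamma_t)^{-1}\ \geq\ \big[\tau^{(t)}_{\sK,\sN}(|\dot\gamma|)\big]^{N}\,e^{\phi_1(\gamma)-\phi_t(\gamma)}\,\m_{\sX}(A_\delta),\qquad\text{with }\ \big[\tau^{(t)}_{\sK,\sN}(\theta)\big]^{N}=t\Big(\tfrac{\sin_{K/(N-1)}(t\theta)}{\sin_{K/(N-1)}(\theta)}\Big)^{N-1}.
\end{equation*}
Since $\de_{\sX}(x_0,\gamma_t)=t\,\de_{\sX}(x_0,\gamma_1)\in[tR,t(R+\delta)]$, the measure $\mu_t$ is supported in $\bar B_{t(R+\delta)}(x_0)\setminus B_{tR}(x_0)$, whose $e^{\phi}$-weighted mass equals $v^{\sZ,x_0}(t(R+\delta))-v^{\sZ,x_0}(tR)$ as spheres are null. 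Writing that mass as $\int\rho_t^{-1}e^{\phi(\gamma_{x_0,\cdot})}d\mu_t$, pulling it back to $A_\delta$ along $y\mapsto\gamma_{x_0,y}(t)$, and using $\phi(\gamma_{x_0,\gamma_{x_0,y}(t)})=\phi_t(\gamma_{x_0,y})$ together with the displayed bound, one arrives at
\begin{equation*}
v^{\sZ,x_0}(t(R+\delta))-v^{\sZ,x_0}(tR)\ \geq\ \int_{A_\delta}\big[\tau^{(t)}_{\sK,\sN}(\de_{\sX}(x_0,y))\big]^{N}e^{\phi(\gamma_{x_0,y})}\,d\m_{\sX}(y).
\end{equation*}
Bounding the coefficient below by its infimum over $\de_{\sX}(x_0,y)\in[R,R+\delta]$, dividing by $t\delta$ and letting $\delta\to0$: the left side has $\limsup$ equal to $s^{\sZ,x_0}(tR)$, the coefficient tends to $t\,\big(\sin_{K/(N-1)}(tR)/\sin_{K/(N-1)}(R)\big)^{N-1}$, and $\delta^{-1}\int_{A_\delta}e^{\phi}d\m_{\sX}$ has $\limsup$ equal to $s^{\sZ,x_0}(R)$; with $r=tR$ this is \eqref{spheres}.

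It remains to deduce \eqref{balls}. Inequality \eqref{spheres} says $\rho\mapsto s^{\sZ,x_0}(\rho)/\sin_{K/(N-1)}^{N-1}(\rho)$ is non-increasing; being finite almost everywhere (because $v^{\sZ,x_0}$ is monotone and finite) it is finite and locally bounded on $(0,R]$, and since $s^{\sZ,x_0}$ is the upper right Dini derivative of the continuous non-decreasing function $v^{\sZ,x_0}$, this forces $v^{\sZ,x_0}$ to be locally Lipschitz, hence $v^{\sZ,x_0}(\rho)=\int_0^\rho s^{\sZ,x_0}(t)\,dt$. The elementary fact that $\int_0^r f\big/\int_0^R f\geq\int_0^r g\big/\int_0^R g$ whenever $f/g$ is non-increasing, applied with $f=s^{\sZ,x_0}$ and $g=\sin_{K/(N-1)}^{N-1}$, then yields \eqref{balls}. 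The case $K=0$ is the specialization $\sin_{0/(N-1)}(\rho)=\rho$; for $N=1$ and $K\leq0$ one repeats the shell computation with $\tau^{(t)}_{\sK,1}(\theta)=t$, giving $s^{\sZ,x_0}(tR)\geq s^{\sZ,x_0}(R)$ and $v^{\sZ,x_0}(r)/v^{\sZ,x_0}(R)\geq r/R$.

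The main obstacle is the degeneracy of the point-to-shell transport: justifying the pointwise $CD(K,N)$ inequality for $\Pi_{x_0,A_\delta}$ needs the approximation by $\m_{\sX}$-absolutely continuous measures together with uniqueness of optimal plans and, more delicately, the passage of an almost-everywhere inequality to the limit. The remaining care is in the limit $\delta\to0$: $\supp\mu_t$ need not exhaust the shell $\bar B_{t(R+\delta)}(x_0)\setminus B_{tR}(x_0)$, which is harmless since only a lower bound on the weighted volume is needed, and one must interchange $\limsup$ with the convergence of the distortion coefficient. The soft part, by contrast, is routine once one observes that the vector field contributes only locally bounded exponential factors.
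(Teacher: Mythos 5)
Your overall strategy is the one the paper itself uses --- transport $\delta_{x_0}$ onto a thin annulus, apply the pointwise inequality of Theorem \ref{nonbranching} with $\tau$-coefficients, discard the $\rho_0$-term, pull the interpolant density back to compare weighted annuli at radii $tR$ and $R$, let $\delta\to 0$ for \eqref{spheres}, and integrate for \eqref{balls} --- but two of your steps have genuine gaps. First, the ``soft'' part: you justify finiteness on bounded sets and nullity of points and spheres by claiming that $\phi_t(\gamma)$ is bounded along geodesics contained in a bounded set, so that the weighted condition differs from Sturm's unweighted one only by factors $e^{\pm C/N}$. But $Z$ is only assumed $L^2$-integrable (the associated $1$-form is merely measurable and $L^2_{loc}$ on $M$), so the line integrals need not be bounded on bounded sets and this reduction fails. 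The paper obtains these statements in the opposite order: continuity of $v^{\sZ,x_0}$, hence nullity of points and spheres, is read off from the difference-quotient inequality (\ref{differences}) after that inequality has been established, and the case $\m_{\sX}(\{x_0\})>0$ is excluded by a separate contradiction argument at the end. Since you invoke sphere-nullity inside your shell computation, your argument as written is circular at that point; it is repairable, but only by reordering as in the paper.

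Second, the degenerate endpoint $\delta_{x_0}$ is the actual crux, and your treatment leaves it open. You approximate $\delta_{x_0}$ by normalized small balls and ``pass to the limit in the pointwise $CD(K,N)$ inequality'', flagging this as delicate but not carrying it out. The inequality at level $\varepsilon$ involves the interpolant densities of $\Pi^{\varepsilon}$, and you give no argument identifying their limits with the density $\rho_t$ of $(e_t)_{\star}\Pi_{x_0,A_\delta}$, nor even that this interpolant is $\m_{\sX}$-absolutely continuous so that the limiting statement makes sense; the bound supplied by the $\varepsilon$-level inequality contains the factor $e^{\phi_t-\phi_1}$, which is only a.e.\ finite, so no uniform density bound survives the limit. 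The paper avoids the spatial approximation altogether: it works directly with $\Pi_{x_0,A_R}$, notes that since $x_0\in M$ the transport geodesics stay in $M$, so by Riemannian interpolation together with the standing assumption of Section 5 that $\vol_{\sM}$ is $\m_{\sX}$-absolutely continuous the measures $\mu_t$, $t\in(0,1)$, are $\m_{\sX}$-absolutely continuous; it then applies the pointwise inequality to the time-restricted plans on $[\epsilon,1]$ and removes $\epsilon$ using that $t\mapsto \rho_t(\gamma_t)^{-1/N}e^{\phi_t(\gamma)/N}$ is semi-convex, hence continuous, on $(0,1)$, a fact derived first from $CD^*(K,N)$. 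You should either import this time-restriction-and-continuity argument or supply the missing stability proof for your $\varepsilon\to0$ limit. Your remaining steps (the pull-back identity $\phi_1(\gamma_{x_0,\gamma(t)})=\phi_t(\gamma)$, the passage from \eqref{spheres} to local Lipschitzness of $v^{\sZ,x_0}$ and then to \eqref{balls} via the monotone-ratio lemma, and the $K=0$, $N=1$ specializations) match the paper and are sound.
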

\begin{proof}
Fix a point $x_0\in \supp[\m_{\sX}]\cap M$, and assume first that $\m_{\sX}(\left\{x_0\right\})=0$. We
set $A_{R}={\bar{B}_{R+\delta R}(x_0)}\backslash B_R(x_0)$, and $t\in[0,1]\mapsto (e_t)_{\star}\Pi_{x_0,A_R}=\mu_t\in \mathcal{P}^2(X)$.
$(\mu_t)_{t\in[0,1]}$ is the unique geodesic between between its endpoints. Note that by definition of a generalized smooth metric measure space transport geodesics 
connecting $x_0\in M$ and points in $A_R$ are $\Pi_{x_0,A}$-almost surely contained in $M$. 
Therefore, since $x_0\in M$ by Riemannian calculus $\mu_t\in \mathcal{P}^2(\vol_{\sM})$ for any $t\in (0,1)$, and
since $\vol_{\sM}$ is $\m_{\sX}$-absolutely continuous, it follows that $\mu_t\in\mathcal{P}^2(\m_{\sX})$ for any $t\in (0,1)$. In particular,
$s\mapsto (e_{(1-s)t_0+st_1})_{\star}\Pi_{x_0,A_R}=:\mu_s$ is the unique Wasserstein geodesic between $(e_{t_0})_{\star}\Pi_{x_0,R}$ and $(e_{t_1})_{\star}\Pi_{x_0,R}$ for any $t_0,t_1\in (0,1]$. 
The unique optimal dynamical plan $\tilde{\Pi}$ is obtained by pushforward $\Pi_{x_0,A_R}$ w.r.t. the map $\gamma\mapsto \tilde{\gamma}$ with $\tilde{\gamma}(s)=\gamma((1-s)t_0+st_1)$. 
Similar $\tilde{\rho}_s=\rho_{(1-s)t_0+st_1}$. 
\smallskip\\
First, we check continuity of $t\mapsto\left[\rho_t(\gamma_t)e^{-\phi_{{s}}(\gamma)}\right]^{-\frac{1}{N}}=:h_t(\gamma)$ on $(0,1)$.
By the condition $CD^*(K,N)$ and Theorem \ref{nonbranching} we have
\begin{align*}
\left[\tilde{\rho}_{{s}}(\gamma_{{s}})e^{-\phi_{{s}}(\gamma)}\right]^{-\frac{1}{N}}
&\geq \sigma_{\sK,\sN}^{(1-s)}(|\dot{\gamma}|)\tilde{\rho}_{0}(\gamma_{0})^{-\frac{1}{N}}+\sigma_{\sK,\sN}^{(s)}(|\dot{\gamma}|)\left[e^{-\phi_1(\gamma)}\tilde{\rho}_1(\gamma_1)\right]^{-\frac{1}{N}}
\end{align*}
for $s\in[0,1]$ and $\tilde{\Pi}$-almost every $\gamma\in\mathcal{G}(X)$. That is
\begin{align*}
\frac{d^2}{dt^2}\left[\rho_t(\gamma_t)e^{-\phi_{{s}}(\gamma)}\right]^{-\frac{1}{N}}\leq -\frac{K}{N}|\dot{\gamma}|^2\left[\rho_t(\gamma_t)e^{-\phi_{{s}}(\gamma)}\right]^{-\frac{1}{N}}\ \ \mbox{on }(0,1)
\end{align*}
for ${\Pi}$-almost every $\gamma\in\mathcal{G}(X)$. In particular, $h_t(\gamma)$ is semi-convex and therefore continuous on $(0,1)$ for $\Pi$-almost every $\gamma$.
\smallskip\\
Now, we choose $t_0=\epsilon$ and $t_1=1$, and by the condition $CD(K,N)$ and again Theorem \ref{nonbranching} we have
\begin{align*}
\left[\tilde{\rho}_{{s}}(\gamma_{{s}})e^{-\phi_{{s}}(\gamma)}\right]^{-\frac{1}{N}}
&\geq \tau_{\sK,\sN}^{(1-{s})}(|\dot{\gamma}|)\tilde{\rho}_{0}(\gamma_{0})^{-\frac{1}{N}}+\tau_{\sK,\sN}^{(s)}(|\dot{\gamma}|)\left[e^{-\phi_1(\gamma)}\tilde{\rho}_1(\gamma_1)\right]^{-\frac{1}{N}}\\
&\geq \tau_{\sK,\sN}^{(s)}(|\dot{\gamma}|)\left[e^{-\phi_1(\gamma)}\rho_1(\gamma_1)\right]^{-\frac{1}{N}}\ \ \mbox{for }\tilde{\Pi}\mbox{-a.e. }\gamma\in\mathcal{G}(X),
\end{align*}
or equivalently,
for ${\Pi}$-almost every $\gamma\in\mathcal{G}(X)$:
\begin{align*}
\left[{\rho}_{{(1-s)\epsilon + s}}(\gamma_{(1-s)\epsilon+s})e^{-\phi_{(1-s)\epsilon+s}(\gamma)}\right]^{-\frac{1}{N}}\geq \tau_{\sK,\sN}^{(s)}((1-\epsilon)|\dot{\gamma}|)\left[e^{-\phi_1(\gamma)}\rho_1(\gamma_1)\right]^{-\frac{1}{N}}.
\end{align*}
By continuity of $h_t(\gamma)$, $\phi_{s}(\gamma)$ in $(0,1)$, it follows
for $\epsilon\rightarrow 0$
\begin{align*}
 \rho_{{s}}(\gamma_{{s}})^{-1}e^{\phi_{{s}}(\gamma)}
&\geq \tau_{\sK,\sN}^{(s)}(|\dot{\gamma}|)^{N}e^{\phi_1(\gamma)}\rho_1(\gamma_1)^{-1}.
\end{align*}
Now, we choose $s=r/R$ for $r\in (0,R)$. Integration with respect to $\Pi$ yields
\begin{align}\label{lele}
\int \rho_{{r/R}}(\gamma_{{r/R}})^{-1}e^{\phi_{{r/R}}(\gamma)}d\Pi(\gamma)
&\geq  \tau_{\sK,\sN}^{(r/R)}(R\pm R\delta)^{\sN}\int e^{\phi_1(\gamma)}d\mathcal{M}_{x_0,A_{R}(x_0)}(\gamma)
\end{align}
where we choose $-$ in $\tau$ if $K\geq 0$, and $+$ otherwise.\\
\\
Note that the left hand side of the previous inequality can be rewritten as follows
\begin{align*}
\int \rho_{{r/R}}(\gamma_{{1}})^{-1}e^{\phi_{{1}}(\gamma)}d\bar{\Pi}(\gamma)&=
\int \rho_{{r/R}}(\gamma_{{1}})^{-1}e^{\phi_{{1}}(\gamma_{x_0,\gamma_1})}d\bar{\Pi}(\gamma)\\
&=\int \rho_{r/R}^{-1}(x)e^{\phi_1(\gamma_{x_0,y})}d(e_{r/R})_{\star}\Pi(y)\\
&=\int e^{\phi_1(\Psi(x_0,y))}d\m_{\sX}|_{\supp\mu_{r/R}}(y)\\
&= \int e^{\phi_1(\Psi(x,y))}d\delta_{x_0}\otimes\m_{\sX}|_{\supp\mu_{r/R}}(x,y)
\end{align*}
where $\bar{\Pi}$ is the optimal plan between $\delta_{x_0}$ and $(e_{r/R})_{\star}\Pi$. $\bar{\Pi}$ arises as the pushforward w.r.t. the map $\Phi(\gamma)(t)=\gamma(tr/R)\in\mathcal{G}(X)$.
Since $\gamma_{r/R}\in A_{r}(x_0)$ for $\Pi$-almost every $\gamma$, the left hand side in (\ref{lele}) is dominated by $\int e^{\phi_{1}(\gamma)}d\mathcal{M}_{x_0,A_{r}(x_0)}$, and we obtain the following 
inequality:
\begin{align}\label{differences}
\frac{v^{x_0,Z}(r+\delta r)-v^{x_0,Z}(r)}{\delta r} \geq \frac{\sin^{\sN-1}_{\sK/(\sN-1)}(r\pm r\delta)}{\sin^{\sN-1}_{\sK/(\sN-1)}(R\pm R\delta)}
\frac{v^{x_0,Z}(R+\delta R)-v^{x_0,Z}(R)}{\delta R}.
\end{align}
By construction, $r\mapsto v^{\sZ,x_0}(r)$ is monoton non-decreasing and right continuous. Hence, it has only countably many discontinuities, 
and we can pick an arbitrarily small continuity point $r>0$. Then, for arbitrarily small $\delta>0$ we can apply (\ref{differences}) to deduce that $v^{\sZ,x_0}$ is continuous at any $R>r$.
Hence $v^{\sZ,x_0}$ is continuous on $(0,\infty)$.
In particular, for any $r>$ and $x_0\in X$ it follow that
$$\int e^{\phi_t(\gamma)}d\mathcal{M}_{x_0,\partial B_R(x_0)}(\gamma)=\int_{\partial B_r(x_0)} e^{\phi_t(\gamma_{x_0,y})}d\m_{\sX}(y)=0,$$
and $\int_{\left\{y_0\right\}} e^{\phi_t(\gamma_{x_0,y})}d\m_{\sX}(y)=0$ for any $y\in X$. Hence, $\m_{\sX}(\partial B_r(x_0))=\m_{\sX}(\left\{x_0\right\})=0$ for any $r>0$ and $x_0\in X$.
Moreover, if $\delta\downarrow 0$ in (\ref{differences}), we obtain (\ref{spheres}).
\medskip\\
Now, for $r>0$ and $\delta>0$ given, one considers $2^n$ points $(1+2^{-n}\delta)r$ in $[r,(1+\delta)r)$. By (\ref{differences}) again it follows
\begin{align*}
0\leq v^{\sZ,x_0}((1+2^{-n}\delta)r)-v^{\sZ,x_0}(r) \leq C(\delta,r)2^{-n}\delta r.
\end{align*}
This implies $v^{\sZ,x_0}$ is locally Lipschitz, and therefore weakly differentiable, and its weak derivative is given by $s^{\sZ,x_0}(r)$ $\mathcal{L}^1$-almost everywhere. Then we can apply Gromov's integration trick 
\cite[Lemma 3.1]{chavelmodern} and obtain (\ref{balls}).
\medskip\\
It remains to consider the case $\m_{\sX}(\left\{x_0\right\})>0$. Assume $\supp\m_{\sX}\neq \left\{x_0\right\}$. Since $\supp\m_{\sX}$ is a length space, 
there is a least one point $y_0\in M$ such that $\m_{\sX}(y_0)=0$. We then apply apply the previous steps. 
This yields $\m_{\sX}(x)=0$ for every $x\in X\backslash \left\{y_0\right\}$ which is a contradiction.
Otherwise $X=\left\{x_0\right\}$ and $\m_{\sX}\equiv \delta_{x_0}$.
\end{proof}
\begin{theorem}[Generalized Bonnet-Myers]
Assume that $(X,\de_{\sX},\m_{\sX},Z)$  satisfies the curvature-dimension $CD(K,N)$ for $K>0$, $N\geq 1$ and a $L^2$-integrable vectorfield $Z$. Then
the support of $\m_{\sX}$ is compact and its diameter $L$ satifies
\begin{align*}
L\leq \pi\sqrt{\frac{N-1}{K}}.
\end{align*}
In particular, if $N=1$, then $\supp\m_{\sX}$ consist of one point. 
\end{theorem}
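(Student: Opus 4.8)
The strategy is the classical one: derive the diameter bound directly from the generalized Bishop--Gromov inequality (\ref{balls}) of Theorem~\ref{bg}, applied to the measure $e^{\phi}\,\mathcal{M}_{x_0,\cdot}$ which plays the role of the ``weighted volume'' here. First I would dispose of the degenerate possibilities using Theorem~\ref{bg}: if $\m_{\sX}$ is supported on a single point there is nothing to prove, and otherwise all points and spheres have mass $0$, so we are in the situation where (\ref{balls}) applies with $N>1$ (the case $N=1$ with $K>0$ is handled separately below). Fix a regular point $x_0\in\supp\m_{\sX}\cap M$. Recall from Theorem~\ref{bg} that for $N>1$ and $K>0$ the inequality (\ref{balls}) is asserted only for $0<r<R\leq \pi\sqrt{(N-1)/K}$; the function $\theta\mapsto\sin_{\sK/(\sN-1)}(\theta)$ is positive exactly on $(0,\pi\sqrt{(N-1)/K})$ and vanishes at the endpoint.

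The key step is to show that $v^{\sZ,x_0}(R)$ cannot grow once $R$ exceeds $L_0:=\pi\sqrt{(N-1)/K}$, which forces every point of $\supp\m_{\sX}$ to lie within distance $L_0$ of $x_0$. Concretely, suppose for contradiction that there is a point $y\in\supp\m_{\sX}$ with $\de_{\sX}(x_0,y)=D>L_0$. Then for $R$ slightly less than $D$ we have $v^{\sZ,x_0}(R)>0$ (since $y$ is in the support and balls around support points have positive measure, using also that $e^{\phi}>0$ and $Z$ has locally finite flow along the relevant geodesics), while the comparison integral $\int_0^R\sin_{\sK/(\sN-1)}^{\sN-1}(t)\,dt$ would need $R\le L_0$ for the monotonicity argument. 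I would run the standard telescoping/limiting argument: apply (\ref{balls}) with $r\uparrow L_0$ held fixed and $R\uparrow D$, noting that the right-hand ratio $\int_0^r\sin^{\sN-1}/\int_0^R\sin^{\sN-1}$ does \emph{not} degenerate because the integrand is bounded and the denominator over $[0,L_0]$ is finite and positive; the contradiction instead comes from pushing $r$ all the way to $L_0$ and observing, via (\ref{spheres}) (or directly via the differential inequality $\frac{d^2}{dt^2}\mathcal{I}^{1/N}\le -\frac{K}{N}|\dot\gamma|^2\mathcal{I}^{1/N}$ from the proof of Theorem~\ref{maintheorem}, restricted to the geodesic from $x_0$ toward $y$), that the density $\rho_t$ along any transport geodesic reaching distance $>L_0$ would have to vanish or blow up, contradicting that $\mu_t$ is a genuine probability measure absolutely continuous with respect to $\m_{\sX}$ on $(0,1)$. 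This last point is really the heart of the matter: a Jacobi field solving $V''+\frac{K}{N-1}V=0$ (after removing the transport direction, as in step~4 of the proof of Theorem~\ref{maintheorem}) has a conjugate point at parameter distance $L_0$, so the optimal map cannot remain nonsingular along a geodesic of length exceeding $L_0$, which is incompatible with the Monge--Amp\`ere equation~(\ref{jacobi}) holding $\m_{\sX}$-a.e.

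Having bounded $\de_{\sX}(x_0,y)\le L_0$ for all $y\in\supp\m_{\sX}$, the set $\supp\m_{\sX}$ is bounded, hence by Theorem~\ref{bg} it has finite measure, and being a closed bounded subset of a complete length space in which (by the generalized smooth structure and local compactness coming from finite measure on bounded sets) bounded closed sets are compact, $\supp\m_{\sX}$ is compact. The diameter bound $L\le 2L_0$ is immediate from the triangle inequality, but the sharp bound $L\le L_0$ requires the slightly finer observation that the argument above applies with $x_0$ replaced by \emph{any} regular point, and in fact one applies the conjugate-point obstruction to the midpoint of a would-be geodesic of length $>L_0$ between two support points, exactly as in Sturm's proof \cite{stugeo2}: a geodesic realizing a distance $>L_0$ between two points of positive local density leads, via (\ref{secondone}) applied on that geodesic, to the same contradiction. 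For $N=1$: here $CD(K,1)$ with $K>0$ is only satisfiable in a degenerate way (the $\tau$-coefficient $\tau_{\sK,1}^{(t)}(\theta)=\theta\cdot\infty$ is infinite whenever $\theta>0$), so the defining inequality of $CD(K,1)$ forces $|\dot\gamma|=0$ $\Pi$-a.e.\ for every pair $\mu_0,\mu_1$, which is possible only if $\mu_0=\mu_1$ for all such pairs, i.e.\ $\supp\m_{\sX}$ is a single point.

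**Main obstacle.** The delicate part is \emph{not} the diameter estimate itself but making rigorous the ``vanishing/blow-up of $\rho_t$ at the conjugate distance'' in the non-smooth generalized setting: one must ensure that the geodesic from $x_0$ toward a far point $y$ can be realized as part of a transport geodesic between absolutely continuous measures with bounded densities (using $\mathcal{M}_{x_0,A}$ with $A$ a small ball around $y$, as in the proof of Theorem~\ref{bg}), so that the distortion inequality (\ref{secondone})/(\ref{spheres}) genuinely applies along it; and then to extract the contradiction from the fact that $\sin_{\sK/(\sN-1)}$ vanishes at $L_0$, forcing $\tau_{\sK,\sN}^{(t)}(|\dot\gamma|)=\infty$ for $|\dot\gamma|\ge L_0$ in the $CD(K,N)$ inequality of Definition~\ref{defA}, which is incompatible with the left-hand side $S_{\sN,t}^{Z}(\Pi)$ being finite (it is, since densities are bounded and $\phi_t$ is bounded along $\Pi$-a.e.\ geodesic by local finiteness of the flow). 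This makes the $\tau$-version of the $CD$ condition essential, and is precisely why the theorem is stated for $CD(K,N)$ rather than merely $CD^*(K,N)$.
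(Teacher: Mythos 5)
Your closing paragraph does arrive at the paper's actual mechanism, and to that extent the approach coincides: take two support points at distance greater than $\pi\sqrt{(N-1)/K}$, replace one by a nearby regular point $x_0\in\supp\m_{\sX}\cap M$, transport $\delta_{x_0}$ to $\m_{\sX}(\bar B_\epsilon(x_1))^{-1}\m_{\sX}|_{\bar B_\epsilon(x_1)}$ (the interpolants are $\m_{\sX}$-absolutely continuous because $x_0$ is regular, exactly as in the proof of Theorem \ref{bg}), and exploit that $\tau_{\sK,\sN}^{(t)}(\theta)=\infty$ once $\theta>\pi\sqrt{(N-1)/K}$, so the right-hand side of the $CD(K,N)$ inequality becomes infinite. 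Your treatment of $N=1$ and the upgrade from ``distance to a fixed regular point'' to the full diameter bound (using density of regular points in the support) are fine.

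The genuine gap is the finiteness of the left-hand side, which is precisely what the contradiction hinges on, and your stated justification fails: the densities are \emph{not} bounded (one endpoint is a Dirac mass, and Definition \ref{defA} must anyway be applied through the limiting argument of the Bishop--Gromov proof), ``locally finite flow'' is not a hypothesis of the theorem, and even where it holds it is an integrability condition on $e^{\phi_1}$, not a pointwise bound on $\phi_t$ along $\Pi$-a.e.\ geodesic. The paper closes this step by rewriting $\int\rho_{1/2}(\gamma_{1/2})^{-1}e^{\phi_{1/2}(\gamma)}\,d\Pi(\gamma)$, via the change-of-variables identity established in the proof of Theorem \ref{bg}, as a weighted volume dominated by $v^{\sZ,x_0}(2\theta)=\int e^{\phi_1(\gamma)}\,d\mathcal{M}_{x_0,\bar B_{2\theta}(x_0)}(\gamma)$, whose finiteness is supplied by Theorem \ref{bg}; without this (or a substitute) the argument does not close. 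Your earlier routes would not substitute for it: inequality (\ref{balls}) is only available for $R\le\pi\sqrt{(N-1)/K}$, so no contradiction can be extracted from the Bishop--Gromov ratios alone (as you half concede), and the Jacobi-field/Monge--Amp\`ere conjugate-point argument presupposes a pointwise Ricci bound on a smooth manifold, whereas here the hypothesis is the synthetic $CD(K,N)$ condition on a generalized smooth space and Theorem \ref{maintheorem} converts one into the other only when $X=M$ is smooth. Finally, compactness of $\supp\m_{\sX}$ should be deduced from total boundedness coming from Theorem \ref{bg} together with completeness, not from ``finite measure on bounded sets implies local compactness''.
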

\begin{proof}We argue by contradiction. 
Assume there are points $x_0,x_1\in \supp\m_{\sX}$ with $\de_{\sX}(x_0,x_1)>\pi\sqrt{\scriptstyle{\frac{N-1}{K}}}.$ Since $\m_{\sX}(X\backslash M)=0$, we can assume that $x_0\in M$.
Choose $\epsilon >0$ such that $\theta:=\de_{\sX}(\delta_{x_0},\bar{B}_{\epsilon}(x_1))>\pi\sqrt{\scriptstyle{\frac{N-1}{K}}}$, and 
set $\mu={\scriptstyle \m_{\sX}(\bar{B}_{\epsilon}(x_1))^{-1}}\m_{\sX}|_{\bar{B}_{\epsilon}(x_1)}$.
Let $\Pi_{x_0,\mu}$ be the optimal transport between $\delta_{x_0}$ and $\mu$. As in the proof of the Bishop-Gromov comparison $x_0\in M$ implies that $(e_t)_{\star}\Pi_{x_0,\mu}$ is $\m_{\sX}$-absolutely continuous.
Hence
\begin{align*}
\int \rho_{{1/2}}(\gamma_{\sscr{\frac{1}{2}}})^{-1}e^{\phi_{\sscr{\frac{1}{2}}}(\gamma)}d\Pi(\gamma)\geq  \tau_{\sK,\sN}^{\sscr{(\frac{1}{2})}}(\theta)^{\sN}\int e^{\phi_1(\gamma)}d\mathcal{M}_{x_0,\bar{B}_{\epsilon}(x_1)}(\gamma)=\infty.
\end{align*}
The left hand side is dominated by 
$\int e^{\phi_{{1}}(\gamma)}d\mathcal{M}_{x_0,\bar{B}_{2\theta}(x_0)}$.
Hence, $v^{\sZ,x_0}(2\theta)=\infty$. On the other hand, Theorem \ref{bg} implies that $v^{\sZ,x_0}(r)<\infty$ for every $r>0$.
\end{proof}
\noindent
For $(X,\de_{\sX},\m_{\sX})$ and $Z$ we define
\begin{align*}
\sup_{B_{\epsilon}(x_0)\in\mathcal{B}_{\epsilon},\epsilon>0} \sum_{B_{\epsilon}(x_0)\in \mathcal{B}_{\epsilon}}^{}\int e^{\phi_{{1}}(\gamma)}d\mathcal{M}_{x_0,\bar{B}_{\epsilon}(x_0)}(\gamma)=:\M_{X,Z}\in [0,\infty]
\end{align*}
where the supremum is w.r.t. families $\mathcal{B}_{\epsilon}$ of disjoint $\epsilon$-Balls.
We also define
\begin{align*}
 \inf_{x_0\in X}\int e^{\phi_{{1}}(\gamma)}d\mathcal{M}_{x_0,X}(\gamma)=:\m_{X,Z}\in [0,\infty].
\end{align*}
We have $\M_{X,Z}\geq \m_{X,Z}$, and 
if $Z$ is essentially bounded, and if $\diam_{\sX}=D<\infty$, then
$\M_{\sX,\sZ}\leq e^{\left\|Z\right\|_{L^{\infty}}D}\m_{\sX}(X)<\infty$ and $\m_{X,Z}> e^{-\left\|Z\right\|_{L^{\infty}}D}\m_{\sX}(X)>0$.
\\
\\
\noindent
Let $\mathcal{X}(K,N)$ be the family of smooth metric spaces $(X,\de_{\sX})$ such that there exits a measure $\m_{\sX}$ and an $L^2$-integrable vector field $Z$ such that $(X,\de_{\sX},\m_{\sX},Z)$ satisfies the condition $CD(K,N)$.  
Let $\mathcal{X}(K,N,L)$ be the subset of metric spaces $(X,\de_{\sX})$ in $\mathcal{X}(K,N)$ with $\diam_{\sX}\leq L$. Note that $\mathcal{X}(K,N)$ usually denotes the class of \textit{metric measure spaces} that satisfy a condition 
$CD(K,N)$. 
\begin{corollary}
The family $\mathcal{X}(K,N,L,C)$ of smooth metric spaces $(X,\de_{\sX})$ such that $(X,\de_{\sX},\m_{\sX},Z)$ satisfies $CD(K,N)$ for a measure $\m_{\sX}$ and an $L^2$-integrable vector field $Z$ with
\begin{align*}
 {\frac{\M_{X,Z}}{\m_{X,Z}}}\leq C<\infty
\end{align*}
is precompact with respect to Gromov-Hausdorff convergence.
\end{corollary}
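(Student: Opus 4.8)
The plan is to verify Gromov's precompactness criterion: a family of compact metric spaces is precompact for Gromov--Hausdorff convergence provided their diameters are uniformly bounded and, for every $\epsilon>0$, the number of $\epsilon$-balls needed to cover each member is bounded by a constant $N(\epsilon)$ depending only on the family. The diameter bound is built into the definition of $\mathcal{X}(K,N,L,C)$, so the task is to produce such a uniform covering bound. Since $\m_{\sX}(X\backslash M)=0$, the set $\supp\m_{\sX}\cap M$ is dense in $\supp\m_{\sX}$, and hence it suffices to bound the cardinality $m$ of an arbitrary $\epsilon$-separated subset $\{x_1,\dots,x_m\}$ of $\supp\m_{\sX}\cap M$: the balls $B_{\epsilon/2}(x_i)$ are then pairwise disjoint, while the $m$ balls $B_{2\epsilon}(x_i)$ cover $\supp\m_{\sX}$, so the covering number of $\supp\m_{\sX}$ at scale $2\epsilon$ is at most $m$.

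We may assume $\epsilon<\diam_{\sX}$, since otherwise $\supp\m_{\sX}$ is contained in a single $2\epsilon$-ball. As $\{B_{\epsilon/2}(x_i)\}_{i=1}^m$ is a disjoint family of $(\epsilon/2)$-balls, the definition of $\M_{X,Z}$ gives
\[
\sum_{i=1}^{m} v^{\sZ,x_i}(\epsilon/2)\;=\;\sum_{i=1}^m\int e^{\phi_1(\gamma)}\,d\mathcal{M}_{x_i,\bar{B}_{\epsilon/2}(x_i)}(\gamma)\;\leq\;\M_{X,Z}.
\]
Each $x_i$ lies in $\supp\m_{\sX}\cap M$, so the generalized Bishop--Gromov inequality of Theorem \ref{bg} applies with $r=\epsilon/2$ and $R=\diam_{\sX}$; one uses (\ref{balls}) when $N>1$, and its exponent-one version when $N=1$ and $K\le 0$, and when $K>0$ the generalized Bonnet--Myers theorem forces $\diam_{\sX}\le\pi\sqrt{(N-1)/K}$, so that $R$ stays in the admissible range. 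Thus, for each $i$,
\[
v^{\sZ,x_i}(\epsilon/2)\;\geq\;\frac{\int_0^{\epsilon/2}\sin_{K/(N-1)}^{N-1}(t)\,dt}{\int_0^{\diam_{\sX}}\sin_{K/(N-1)}^{N-1}(t)\,dt}\;v^{\sZ,x_i}(\diam_{\sX})\;\geq\;\theta_\epsilon\,v^{\sZ,x_i}(\diam_{\sX}),
\]
where $\theta_\epsilon\in(0,1]$ is the same ratio with $\diam_{\sX}$ in the denominator replaced by $L':=L$ if $K\le 0$ and by $L':=\min\{L,\pi\sqrt{(N-1)/K}\}$ if $K>0$; this enlargement of the denominator is legitimate because $\diam_{\sX}\le L'$ and the integrand is positive on $(0,L')$, and $\theta_\epsilon$ depends only on $K,N,L,\epsilon$. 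Finally, since $\m_{\sX}$ is carried by $\bar{B}_{\diam_{\sX}}(x_i)$, we have $v^{\sZ,x_i}(\diam_{\sX})=\int e^{\phi_1(\gamma)}\,d\mathcal{M}_{x_i,X}(\gamma)\ge\m_{X,Z}$ by the definition of $\m_{X,Z}$ as an infimum over base points.

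Chaining the three estimates yields $m\,\theta_\epsilon\,\m_{X,Z}\le\sum_i v^{\sZ,x_i}(\epsilon/2)\le\M_{X,Z}\le C\,\m_{X,Z}$, and since $\m_{X,Z}>0$ (the hypothesis $\M_{X,Z}/\m_{X,Z}\le C<\infty$ is only meaningful when $\m_{X,Z}>0$, which indeed holds because $v^{\sZ,x_0}(\diam_{\sX})>0$ for every $x_0$), we conclude $m\le C/\theta_\epsilon=:N(\epsilon)$, a bound independent of the particular space. As this holds for every finite $\epsilon$-separated subset of $\supp\m_{\sX}\cap M$, each $\supp\m_{\sX}$ is totally bounded; being closed in the complete space $X$, it is compact. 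The family is therefore uniformly totally bounded, and Gromov's precompactness theorem yields the assertion. (When $N=1$ and $K>0$, each $\supp\m_{\sX}$ reduces to a point by Bonnet--Myers and there is nothing to prove.)

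The only substantive point is the uniformity of $N(\epsilon)$: it rests on applying Bishop--Gromov in its multiplicative ``ratio-to-the-diameter'' form, which is exactly what permits absorbing the small volumes $v^{\sZ,x_i}(\epsilon/2)$ into $\M_{X,Z}$ and the large volume $v^{\sZ,x_i}(\diam_{\sX})$ into $\m_{X,Z}$; once this is done the hypothesis $\M_{X,Z}/\m_{X,Z}\le C$ closes the estimate. Bonnet--Myers enters only to keep the comparison radius inside the interval on which the comparison model has positive volume when $K>0$. The density of the regular set $M$ in $\supp\m_{\sX}$ and the treatment of the degenerate cases are routine.
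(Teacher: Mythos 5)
Your proof is correct and follows essentially the same route as the paper: disjoint small balls centered at regular points (using density of $M$), the generalized Bishop--Gromov inequality to bound each weighted ball volume from below by a constant $C(K,N,L,\epsilon)$ times $\m_{X,Z}$, summation against the definition of $\M_{X,Z}$, and the hypothesis $\M_{X,Z}/\m_{X,Z}\leq C$ to get a uniform packing bound, followed by Gromov's precompactness theorem. Your use of $\epsilon$-separated sets (and your explicit handling of the $K>0$ range restriction via Bonnet--Myers and of the degenerate cases) is only a minor, slightly more careful variant of the paper's argument.
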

\begin{proof}Let $(X,\de_{\sX})\in\mathcal{X}(K,N,L,C)$, and let $\m_{\sX}$ be a measure and let $Z$ be an admissible vector field such that ${\scriptstyle\frac{\M_{X,Z}}{\m_{X,Z}}}\leq C <\infty$
and $(X,\de_{\sX},\m_{\sX},Z)$ satifies $CD(K,N)$.
Fix $\epsilon>0$ and a family $\mathcal{B}_{\epsilon}$ of disjoint $\epsilon$-balls in $X$. We replace every $\epsilon$-ball by a slightly smaller $\epsilon'$-ball that is contained in the first one and centered in $M$ for some $\epsilon'<\epsilon$.
This is possible since $M$ is dense in $X$.
The generalized Bishop-Gromov inequality yields for every $B_{\epsilon'}(x_0)$ with $x_0\in \supp\m_{\sX}\cap M$.
\begin{align*}
\int e^{\phi_{{1}}(\gamma)}d\mathcal{M}_{x_0,\bar{B}_{\epsilon'}(x_0)}(\gamma)\geq C_{K,N,L} \int e^{\phi_{{1}}(\gamma)}d\mathcal{M}_{x_0,X}(\gamma)\geq C_{K,N,L}\m_{X,Z}.
\end{align*}
Hence $\infty>C\geq \frac{\M_{X,Z}}{\m_{X,Z}}\geq C(K,N,L)\#\mathcal{B}_{\epsilon}$, and
the family $\mathcal{X}(K,N,L,C)$ is uniformily totally bounded, and the claim follows from Gromov's (pre)compactness theorem.
\end{proof}
\begin{remark}
Let us emphasize that the measures $\m_{\sX}$ do not play a role explicitly in proving that the family $\mathcal{X}(K,N,L,C)$ is uniformily totally bounded. The total masses do not need to be uniformly bounded, but the quantity $
 {{\M_{X,Z}}/{\m_{X,Z}}}$. Therefore, a sequence of measures has not to be precompact, and consequently the statement is not formulated in terms of measured Gromov-Hausdorff convergence (or measured Gromov convergence \cite{gmsstability}).
 This viewpoint becomes even more apparent if one considers just vector fields $Z$ that are uniformily essentially bounded. Then, in ${{\M_{X,Z}}/{\m_{X,Z}}}$ the mass cancels, and $\left\|Z\right\|_{L^{\infty}}\leq C$ suggests that one 
 can extract a ``weakly converging'' subseqence of $\left\{Z\right\}$.
\end{remark}

\section{Examples}
\paragraph{\textbf{Riemannian manifolds with boundary}} Let $(M,g_{\sM})$ be a geodesically convex, compact Riemannian manifold with non-empty boundary, and $\dim_{\sM}=n$.
For instance, one can consider a convex domain in $\mathbb{R}^n$. Condition (\ref{transportcondition}) is obviously satisfied.
Note that any Wasserstein geodesic in $\mathcal{P}^2(M)$ between absolutely continuous probability measures with bounded densities has midpoint measures with bounded densities since $\ric_{\sM}\geq -C$ for some $C>0$.
Let $Z$ be any smooth vector field on $M$ that is $L^2$-integrable w.r.t. $\vol_{\sM}$. Assume that $-\nabla^s Z\geq K+ \frac{1}{N}Z\otimes Z$ for some $K\in\mathbb{R}$ and $N\geq 1$.
Then $(M,\de_{\sM},\vol_{\sM},Z)$ satisfies the condition $CD(K+K',N+n)$ if $\ric_{\sM}\geq K'$. 
In particular, if $K=0$, $\ric_{\sM}\geq K'+\epsilon>0$ and $\left\|Z\right\|_{L^{\infty}}\leq \sqrt{\epsilon N}$, then $\nabla^sZ\geq K-\epsilon$ and 
$(M,\de_{\sM},\vol_{\sM},Z)$ satisfies the condition $CD(K',N+n)$.
\medskip
\noindent
\paragraph{\textbf{$N$-warped products}}
Let $(B,g_B)$ be a $d$-dimensional Riemannian manifold, and lef $f:B\rightarrow (0,\infty)$ be a smooth function.
We assume
\begin{itemize}\smallskip
 \item[(i)] $\ric_{\sB}\geq (d-1)Kg_{\sB}$\smallskip
 \item[(ii)] $\nabla^2f+Kfg_B\leq 0$\smallskip
 \item[(iii)] $|\nabla f|^2+Kf^2\leq K_{\scriptscriptstyle{F}}$.
\end{itemize}
\medskip
Let $(F,g_{\sF},\m_{\scriptscriptstyle{F}})=\F$ be weighted Riemannian manifold where $\m_{\scriptscriptstyle{F}}=e^{-\Psi}d\vol_{\scriptscriptstyle{F}}$ is a smooth 
reference measure and $\vol_{\scriptscriptstyle{F}}$ is the Riemannian volume with respect to $g_{\scriptscriptstyle{F}}$. The
Riemannian $N$-warped product $C:=B\times^N_fF$ between $B$, $\F$ and $f$ is given by the metric completion $(X,\de_{\sX})$ of the Riemannian metric
\begin{eqnarray*}
g_{\scriptscriptstyle{B\times_f F}}=(p_{\sB})^{\star}g_{\sB}+(f\circ p_{\sF})^2(p_{\scriptscriptstyle{F}})^{\star}g_{\scriptscriptstyle{F}}.
\end{eqnarray*}
on $B\times F$, and the measure $d\vol_{\sB}\otimes f^Nd\m_{\sF}=\m^N_f$ on $X$ (see also \cite{ketterer}). $p_B:B\times F\rightarrow B$ and $p_F:B\times F\rightarrow F$ are the projections maps.
\medskip\\
The $N$-Ricci tensor of $g_{\scriptscriptstyle{B\times_f F}}$ and $\m_{f}^{\sN}$ is computed in \cite{ketterer}:
\begin{align*}
&\ric^{N+d,\m^N_{\scriptscriptstyle{C}}}_{_{\scriptscriptstyle{C}}}(\xi+v)=\ric_{{B}}(\xi)-\textstyle{N\frac{\nabla^2f(\xi)}{f(p)}}\\
&\hspace{4cm}+\ric^{N,\m_{\scriptscriptstyle{F}}}_{\scriptscriptstyle{F}}(v)-\left[\textstyle{\frac{\Delta^Bf(p)}{f(p)}}+(N-1)\textstyle{\frac{|\nabla f_p|^2}{f^2(p)}}\right]|v|_{\sC}^2
\end{align*}
and the previous assumptions together with
$\ric^{N,\m_{\scriptscriptstyle{F}}}_{{\scriptscriptstyle{F}}}(v,v)\geq(N-1)K_{\scriptscriptstyle{F}}|v|_{\sF}^2$ imply
\begin{equation*}\ric^{N+d,\m^N_{\scriptscriptstyle{C}}}_{\sC}(\xi+v,\xi+v)\geq (n+d-1)K|\xi+v|_{\sC}^2\,
\end{equation*}
for every $v\in T F$ and for every $\xi+v\in TB\times_f F=TB\oplus TF$ respectively. 
\medskip\\
Let $(F,g_{\sF})=\F$ be a Riemannian manifold and $Z$ a smooth vector field on $F$ such that $(\F,Z)$ satisfies the condition $CD(K_{\sF}(N-1),N)$. By 
Theorem \ref{maintheorem} the condition is equivalent to $\ric_{\sF,\sZ}^{\sN}\geq K_{\sF}(N-1)$. 

\begin{theorem}
Let $f$, $(F,g_{\sF},\vol_{\sF})=\F$, and $Z$ be as before, with constants $K,K_{\sF}\in \mathbb{R}$ and $N\geq 1$. 
Assume $B$ has Alexandrov curvature bounded from below by $K$, and $f^{-1}(\left\{0\right\})\subset \partial B$. If $N=1$ and $K_{\sF}>0$, we assume $\diam_{\sF}\leq \pi/\sqrt{K_{\sF}}$. 
\smallskip\\
Then the $N$-warped product $B\times_{f}^{\sN}F$ is a smooth metric measure space, and $(B\times_{f}^{\sN}F, Z^{\flat})$ satisfies the condition $CD(K(N+d-1),N+d)$ where $Z^{\flat}={f^{-2}}Z$.
\end{theorem}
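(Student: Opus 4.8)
\textbf{Step 1 ($C$ is generalized smooth, and reduction to nice data).} Away from the collapsed set $f^{-1}(\{0\})\times F$ the space $C:=B\times_f^N F$ carries the genuine Riemannian metric $g_{B\times_f F}$ written above, and the transport condition (\ref{transportcondition}) together with the identification of its regular set $M_C=(B\setminus f^{-1}(\{0\}))\times F$ is precisely the warped-product analysis of \cite{ketterer} --- this is why warped products already appear among the examples after Definition~\ref{gsmms}; the hypotheses that $B$ has Alexandrov curvature $\geq K$, that $f^{-1}(\{0\})\subset\partial B$, conditions (i)--(iii), and the extra assumption $\diam_F\leq\pi/\sqrt{K_F}$ when $N=1,K_F>0$ are all used there. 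By Theorem~\ref{nonbranching} and the remark after Definition~\ref{gsmms} it is then enough to verify the (reduced) curvature-dimension inequality of that theorem on pairs of compactly supported $\m^N_f$-absolutely continuous measures, which we may take supported in $M_C$.

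\textbf{Step 2 (smooth base: reduction to a pointwise bound).} Suppose first that $B$ is a smooth $d$-manifold with convex boundary, and set $n':=\dim F$. As the fibre metric of $C$ is $(f\circ p_B)^2 g_F$, one has $\vol_C=(f\circ p_B)^{n'}\,\vol_B\otimes\vol_F$, so $\m^N_f=e^{-W}\vol_C$ with $W:=(n'-N)\log(f\circ p_B)$. A direct inspection of Definition~\ref{defA} shows that $(C,\de_C,\m^N_f,Z^\flat)$ satisfies $CD(K',N')$ if and only if $(C,\de_C,\vol_C,\widetilde Z)$ does, where $\widetilde Z:=Z^\flat-\nabla^C W$: passing from $\m^N_f$ to $\vol_C$ multiplies $\rho_t^{-1/N'}$ by $e^{W(\gamma_t)/N'}$, while passing from the line integral of $Z^\flat$ to that of $\widetilde Z$ multiplies $e^{\phi_t(\gamma)/N'}$ by $e^{-(W(\gamma_t)-W(\gamma_0))/N'}$, so every term of the $CD$-inequality picks up the common factor $e^{W(\gamma_0)/N'}$. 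By Theorem~\ref{maintheorem} (with effective dimension $N+d$, hence codimension $N-n'$) it therefore suffices to prove
\begin{align*}
\ric^{N+d}_{C,\widetilde Z}(u)\ \geq\ K(N+d-1)\,|u|_C^2\qquad\text{for every }u\in TC.
\end{align*}

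\textbf{Step 3 (the pointwise computation).} Here $\nabla^C W=(n'-N)(\nabla^B\log f)^{\mathrm{hor}}$ is horizontal while $Z^\flat=(f\circ p_B)^{-2}Z^{\mathrm{ver}}$ is vertical. The weighted $N$-Ricci tensor $\ric^{N+d,\m^N_f}_C=\ric_C+\nabla^2 W-\tfrac1{N-n'}dW\otimes dW$ is exactly the tensor displayed before the theorem, and incorporating the extra vertical drift $Z^\flat$ yields
\begin{align*}
\ric^{N+d}_{C,\widetilde Z}=\ric^{N+d,\m^N_f}_C-\nabla^s_C Z^\flat-\tfrac1{N-n'}\langle Z^\flat,\cdot\rangle_C^{\,2}+\tfrac2{N-n'}\langle Z^\flat,\cdot\rangle_C\,\langle\nabla^C W,\cdot\rangle_C .
\end{align*}
Evaluating at $u=\xi+v$ with $\xi\in TB$, $v\in TF$ and using the O'Neill formulas for $g_{B\times_f F}$: the horizontal block is $\geq K(N+d-1)|\xi|_C^2$ by $\sec_B\geq K$ and (ii); the mixed terms combine with the remaining diagonal terms by the same completion of squares as in Step~3 of the proof of Theorem~\ref{maintheorem}; and the vertical block, after cancelling the factors $(f\circ p_B)^{\pm2}$ carried by the fibre metric and by $Z^\flat$ and after using (ii)--(iii), is bounded below by $(f\circ p_B)^{-2}\ric^N_{F,Z}(v)$ up to an error that is nonnegative by (iii). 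Since $(F,\vol_F,Z)$ satisfies $CD(K_F(N-1),N)$, Theorem~\ref{maintheorem} gives $\ric^N_{F,Z}\geq K_F(N-1)$, so the vertical block is $\geq K(N+d-1)|v|_C^2$. The degenerate values are handled by the conventions already built into the definition of $\ric^{N}_{F,Z}$: for $N=n'$ the fibre bound forces $\langle Z,v\rangle_F=0$ on the relevant directions, and for $N=1$ (where $\ric^1_{F,Z}\equiv-\infty$ off $v=0$) the hypothesis $\diam_F\leq\pi/\sqrt{K_F}$ is what validates the one-dimensional fibre comparison, exactly as for euclidean $N$-cones and $N$-suspensions.

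\textbf{Step 4 (general Alexandrov base).} For a genuinely non-smooth base $B$ I would run the optimal transport argument on $C$ directly, as in \cite{ketterer}: a $W_2$-geodesic on $C$ between measures supported in $M_C$ lifts to a dynamical optimal plan whose geodesics each project to a geodesic of $B$ and, after the $f$-induced time change along that base geodesic, to a geodesic of $F$; disintegrating along the base, the base factor of the Jacobian is controlled by the Alexandrov lower bound $\geq K$ and (ii), the warping factor $(f\circ p_B)^N$ by (ii)--(iii), and the fibre factor by $CD(K_F(N-1),N)$ for $(F,Z)$ as in Step~3. The one genuinely new ingredient --- and the step I expect to be the main obstacle --- is that the line integral $\phi_t$ of $Z^\flat$ along such a geodesic must equal the line integral of $Z$ along the reparametrized fibre geodesic; this is exactly what the normalization $Z^\flat=f^{-2}Z$ achieves, since a unit ambient speed in the fibre direction corresponds to fibre speed $(f\circ p_B)^{-1}$ and the factor $f^{-2}$ turns $\langle Z^\flat,\dot{\gamma}\rangle_C$ into the $F$-pairing $\langle Z,\dot{\gamma}\rangle_F$ read along the fibre geodesic, up to the reparametrization. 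With this in hand the fibre inequality $\ric^N_{F,Z}\geq K_F(N-1)$ can be applied inside the disintegration and reassembled against the base via (ii)--(iii). Making all of this rigorous in the essentially non-branching setting of \cite{ketterer} is where the real work lies.
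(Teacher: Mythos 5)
Your pointwise computation (Steps 2--3) reproduces, up to repackaging the weight $f^{N-n'}$ into the drift, the first step of the paper's proof: the paper likewise computes $\ric^{\sN+d}_{\scriptscriptstyle{C},\sZ^{\flat}}(\xi+v)$ as in Proposition 3.2 of \cite{ketterer} and deduces the bound $(N+d-1)K$ from (i)--(iii) and $\ric^{\sN}_{\scriptscriptstyle{F},\sZ}\geq (N-1)K_{\sF}$ (the latter via Theorem \ref{maintheorem} applied to the fibre). Your Step 4 is also in the spirit of the paper's final step, which runs the warped-product optimal transport argument of \cite{ketterer} in combination with Theorem \ref{maintheorem} rather than applying Theorem \ref{maintheorem} to $C$ itself.

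The genuine gap is in Step 1, i.e.\ in the claim that $B\times^{\sN}_fF$ is a generalized smooth metric measure space, which you delegate wholesale to \cite{ketterer}. The point of that verification is the transport condition (\ref{transportcondition}): one must show that optimal dynamical plans starting from $\m^{\sN}_f$-absolutely continuous measures give no mass to geodesics hitting the collapsed set $f^{-1}(\{0\})\times F$. In \cite{ketterer} this rests on a maximal diameter (rigidity) theorem for the fibre, and here the fibre is $(F,g_{\sF},\vol_{\sF},Z)$ with a genuinely nonsymmetric drift satisfying the new condition $CD(K_{\sF}(N-1),N)$ --- a situation not covered by \cite{ketterer}. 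The paper's proof supplies exactly the missing ingredients: Theorem 1.2 of \cite{albi} to translate hypothesis (iii) into the dichotomy $K_{\sF}\geq f^2K$ or ($K_{\sF}>0$ and $|\nabla f|\leq\sqrt{K_{\sF}}$), the generalized Bonnet--Myers theorem of this paper to get $\diam_{\sF}\leq\pi/\sqrt{K_{\sF}}$ when $K_{\sF}>0$, and, crucially, Kuwada's maximal diameter theorem \cite{kuwadamaximaldiameter} for smooth nonsymmetric diffusions to handle the case where this diameter bound is attained. None of this appears in your plan, and without it the avoidance of the singular set --- hence the very applicability of Theorem \ref{nonbranching} and of your Jacobian/disintegration analysis --- is unproved. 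Relatedly, your suggestion that for a smooth base the conclusion follows by applying Theorem \ref{maintheorem} directly to $C$ does not work: even with $B$ smooth, $C$ fails to be a smooth metric measure space along $f^{-1}(\{0\})\times F$ (and along $\partial B$), which is precisely why the paper passes through the generalized-smooth framework and the transport argument instead.
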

\begin{proof}
First,
by similar computations as in the proof of Proposition 3.2 in \cite{ketterer} we obtain that the $N+d$-Ricci tensor of $L^C$ is given by
\begin{align*}
\ric^{\sN+d}_{\scriptscriptstyle{C},\sZ^{\flat}}(\xi+v)=&\ric_{\scriptscriptstyle{B}}(\xi)-N\frac{\nabla^2f(\xi)}{f(p)}+\ric^{\sN}_{\scriptscriptstyle{F},\sZ}(v)-\left[\textstyle{\frac{\Delta^Bf(p)}{f(p)}}+(N-1)\textstyle{\frac{|\nabla f_p|^2}{f^2(p)}}\right]|v|_{\scriptscriptstyle{C}}^2
\end{align*}
and under the previous assumptions together with
$\ric^{\sN}_{\scriptscriptstyle{F},\sZ}(v)\geq(N-1)K_F|v|_{\scriptscriptstyle{F}}^2$, this implies
\begin{equation*}
\ric^{N+d,}_{\scriptscriptstyle{C},\sZ^{\flat}}(\xi+v)\geq (N+d-1)K|\xi+v|_{\scriptscriptstyle{C}}^2.
\end{equation*}
\smallskip
\noindent
Moreover, if $K_{\sF}>0$, then by the generalized Bonnet-Myers theorem we know that $\diam_{\sF}\leq \pi{\scriptstyle \sqrt{\frac{1}{K_{\sF}}}}$.
Note, by Theorem 1.2 in \cite{albi} condition (iii) from the beginning - provided condition (ii) - is equivalent to 
\begin{itemize}
\smallskip
 \item[(1)] $K_{\sF}\geq f^2 K$ if $f^{-1}(\left\{0\right\})=\emptyset$,
 \medskip
 \item[(2)] $K_{\sF}>0$ and $|\nabla f|\leq \sqrt{K_{\sF}}$ if $f^{-1}(\left\{0\right\})\neq\emptyset$.
 \medskip
\end{itemize}
Then, exactly like in the proof of Theorem 3.4 in \cite{ketterer} we can prove
if $\Pi$ is an optimal dynamical transference plan in $C$
such that $(e_0)_{\star}\Pi$
is 
absolutely continuous with respect to $\m^{\sN}_f$, then
\begin{align*}
\Pi\left(\left\{\gamma\in\mathcal{G}(X): \exists\ t\in (0,1) \mbox{ such that } \gamma(t)\in f^{-1}(\left\{0\right\})\right\}\right)=0.
\end{align*}
Hence, the $N$-warped product is indeed a generalized smooth metric measure space in the sense of our definition. For the proof we need a maximal diameter theorem 
for $(F,g_{\sF},\vol_{\sF},Z)$ that satisfies the condition $CD(K_{\sF},N)$ with $K_{\sF}>0$. For smooth $Z$ this is provided by Kuwada in \cite{kuwadamaximaldiameter} (We also refer to \cite{limoncu} for closely related results). 
\smallskip\\
Finally, following the lines of the proof of the main theorem in \cite{ketterer} in combination with Theorem \ref{maintheorem}, we obtain the condition $CD(K,N)$ with admissible vector field $Z^{\flat}$.
\end{proof}
\begin{remark}
\begin{itemize}
 \item[(i)] In particular, the previous theorem covers the case of $N$-cones and $N$-suspensions that play an important role for the study of spaces with generalized lower Ricci curvature bounds. 
 \item[(ii)] $\tilde{Z}$ is $L^2$-integrable w.r.t. $\m^{\sN}_f$ if $N>1$.
\end{itemize}
\end{remark}

\begin{example}
Let $B=[0,\pi]$ and $F=\mathbb{S}^2$, and let $Z$ be any smooth vector field on $\mathbb{S}^2$. 
Since $Z$ is smooth and $\mathbb{S}^2$ is compact, we can find a constant $\kappa>0$ such that 
\begin{align*}
-\nabla^sZ(v,v)-\langle Z,v\rangle^2\geq -\kappa|v|^2\ \ \mbox{ for any }v\in T\mathbb{S}^2.
\end{align*}
Now, we consider $\alpha Z$ with $\alpha>0$ such that $\alpha \kappa \leq \frac{1}{2}$ and choose $N>2$ such that $\frac{1}{N-2}\leq \frac{1}{\alpha}$. 
Hence
\begin{align*}
-\nabla^s\alpha Z(v,v)-\frac{1}{N-2}\langle \alpha Z,v\rangle^2= -\alpha\left[\nabla^sZ(v,v)+\frac{\alpha}{N-2}\langle Z,v\rangle^2\right]\geq -\alpha \kappa|v|^2.
\end{align*}
and therefore
\begin{align*}
\ric_{\sF,\alpha Z}^{\sN}(v,v)&=\ric_{\sF}(v,v)-\nabla^s\alpha Z(v,v)-\frac{1}{N-2}\langle \alpha Z,v\rangle^2\\
&\geq (1-\alpha\kappa)|v|^2\geq \frac{1}{2}|v|^2=\frac{1}{2(N-2)}(N-2)|v|^2.
\end{align*}
Now, we set $K_{\sF}:= \frac{1}{2(N-2)}$ and $f:[0,\pi]\rightarrow [0,\infty)$ with $f(r)=\sqrt{K_{\sF}}\sin(r)$.
The previous theorem yields that $B\times_{f}^{\sN}F$ with $\frac{1}{\sin^2}\alpha Z=\alpha\tilde{Z}$ satisfies the condition $CD(N,N+1)$.
Note $\ric_{\sF,Z}^{\sN}(v,v)=g_{\sF}(v,v)$ one only can achieve if we set $\alpha=0$. Then, any $N\geq 2$ is admissible, $Z=0$ and - for $N=2$ - $B\times_f^{2}F\simeq\mathbb{S}^3$. 
Otherwise, the underlying metric space has singularities
at the vanishing points of $f$.
Also note that $f^{\sN}rdr\otimes d\vol_{\mathbb{S}^2_{}}$ is an invariant measure of the corresponding 
nonsymmetric diffusion operator
$$\frac{d^2}{dr^2}+\frac{N}{\sin}\frac{d}{dr}+\frac{1}{\sin^2}\left(\Delta_{\mathbb{S}^2_{\beta}}+\alpha Z\right)=\Delta_{B\times_{f}^{\sN}F}+\alpha\tilde{Z}$$
on $B\times_{f}^{\sN}F$ where $\tilde{Z}=\frac{1}{\sin^2}Z$. 
We observe that the diameter bound $\pi$ of the generalized Bonnet-Myers theorem is attained but the $N$-warped product is not a smooth manifold unless $\alpha=0$ and $N=2$! 
In \cite{kuwadamaximaldiameter} Kuwada proves that for $(\X,Z)$ (with $X=M$ a smooth manifold, $\m_{\sX}=\vol_{\sM}$, and $Z$ a smooth vector field) 
that satisfies $CD(n-1,n)$ with $n\in \mathbb{N}$ and $n\geq 2$ the maximal diameter is attained if and only if $X=\mathbb{S}^n$ and $Z=0$.
We also observe that the generalized Laplace operator splits into a nonsymmetric part $\frac{\alpha}{\sin^2}Z$ and a symmetric part $\Delta_{B\times_{f}^{\sN}F}$
where the invariant measure $f^{\sN}rdr\otimes d\vol_{\mathbb{S}^2_{\beta}}$ is also the invariant measure for the symmetric part, and $\int Zg f^{\sN}rdr\otimes d\vol_{\mathbb{S}^2_{\beta}}=0$ for any smooth function $g$.
\end{example}

\section{Evolution variational inequality and Wasserstein control}
\noindent
In this section we discuss the link between the curvature-dimension condition and contraction estimates in Wasserstein space. 
Let $(X,\de_{\sX},\m_{\sX})$ be a compact generalized smooth metric measure space that satisfies the condition $CD(K,N)$ for an admissible vector field $Z$. 
We assume that $X=M_{}$, $\m_{\sX}=\vol_{\sM}$ and $Z$ is smooth. Hence, $\de_{\sX}$ is induced by a smooth Riemannian metric $g_{\sM}$ on $M$.
Theorem \ref{maintheorem} yields
\begin{align*}
\ric_{\sM,\sZ}^{\sN}\geq Kg_{\sM}
\end{align*}
for $K\in\mathbb{R}$ and $N\in [1,\infty]$. We also assume $N=\infty$. \\
\\
We can consider the diffusion operator ${L}=\Delta + Z$ and the corresponding nonsymmetric Dirichlet form
\begin{align*}
\mathcal{E}(u)=\int|\nabla u|^2d\vol_{\sM} + \int Z(u)u d\vol_{\sM}
\end{align*}
on $L^2(\vol_{\sM})$
where $D(\mathcal{E})=W^{1,2}(X)$. Note that $\mathcal{E}$ is a  Dirichlet form adapted to the Dirichlet energy w.r.t. $g_{\sM}$ in the sense of \cite{lierlsaloffcoste}, and
$\Delta$ is the Laplace-Beltrami operator of $(M,g_{\sM})$. Let $P_t$ be the associated diffusion semigroup, let $P_t^*$ be the so-called co-semigroup, 
and let $\mathcal{H}_t$  be the dual flow acting on probability measures:
\begin{align*}
\int  (P_tu) v d\vol_{\sM}= \int u P_t^*vd\vol_{\sM}, \ \ \int \phi d\mathcal{H}_t\mu=\int P_t\phi d\mu
\end{align*}
where $u,v\in L^2(\vol_{\sM})$ and $\phi\in C_b(X)$. $L^*$ denotes the generator of $P_t^*$. Note
\begin{align}\label{uuuu}
\frac{d}{dt}\int u P_t^*v d\vol_{\sM} = -\int u L^* vd\vol_{\sM}=\mathcal{E}(u,v).
\end{align}
For more details about nonsymmetric Dirichlet forms and the relation between $P_t$, $P^*_t$, $L$, $L^*$ and $\mathcal{E}$ we refer to \cite{maroeckner}.
Since
$
\int d\mathcal{H}_t\mu=\int P_t1d\mu=1,
$
we have $\mathcal{H}_t:\mathcal{P}^2(X)\rightarrow \mathcal{P}^2(X)$.\smallskip\\
We assume that there is a $C^{\infty}$-kernel $p_t(x,y)$ for $P_t$, i.e. 
$$
P_t u(x)=\int_X p_t(x,y)u(y)d\vol_{\sM}(y)
$$ 
such that $p:(0,\infty)\times X^2\rightarrow (0,\infty)$ is $C^{\infty}$.
Then 
\begin{align*}
\int \phi(x) d\mathcal{H}_t\mu(x)=\int_X \int_X p_t(x,y)\phi(y)d\vol_{\sM}(y) d\mu(x)
\end{align*}$\mbox{for any } \phi\in C_b(X).$
In particular, $\mathcal{H}_t\mu$ is $\vol_{\sM}$-absolutely continuous for any $\mu\in\mathcal{P}^2(M)$, and its density $\rho_t(y)=\int p_t(x,y) d\mu(x)$ is smooth. 
Note that we can extend $P_t^*$ (and $P_t$ as well) as semigroup acting on $L^1(\vol_{\sM})$. 
Then, for $\vol_{\sM}$-absolutely continuous probability
measures $\mu=\rho\vol_{\sM}$ we have $\mathcal{H}_t\mu=P^*_t \rho d\vol_{\sM}$.
By abuse of notation we denote with $L$ the $L^1$-generator of $P_t$, and similar for $P_t^*$. Then, for instance (\ref{uuuu}) extends in a canonical way provided $L^{\infty}$-integrability for $u$. 
This is straightforward, and we omit details. 

\begin{proposition}[Kuwada's Lemma]\label{kuwadaslemma} Let $(M,g_{\sM}, \vol_{\sM})$ and $Z$ be as before. Let $\mu\in\mathcal{P}^2(M)$.
Then $t\in (0,\infty)\mapsto \mathcal{H}_t\mu$ is a locally absolutely continuous curve in $\mathcal{P}^2(\vol_{\sM})$, and 
\begin{align}\label{metricspeed}
|\dot{\mathcal{H}}_{t}\mu|^2=\lim_{s\rightarrow 0}\frac{1}{s^2}W_2(\mathcal{H}_t\mu,\mathcal{H}_{t+s}\mu)^2\leq \int_M  |\nabla\log\rho_{t}-Z|^2d\mathcal{H}_{t}\mu .
\end{align}
In particular, it is differentiable almost everywhere in $t$.
\end{proposition}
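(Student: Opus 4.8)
The plan is to recognize $t\mapsto\mathcal{H}_t\mu$ as a solution of a continuity equation and then bound its $W_2$-speed by the $L^2$-norm of the associated velocity field, following Kuwada's duality argument through the Hopf-Lax semigroup.

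\emph{Step 1 (Continuity equation).} Computing the $L^2(\vol_{\sM})$-adjoint of $L=\Delta+Z$ gives $L^{*}v=\Delta v-Z(v)-(\operatorname{div}Z)v=\operatorname{div}(\nabla v-vZ)$. Since for $t>0$ we have $\mathcal{H}_t\mu=\rho_t\vol_{\sM}$ with $\rho_t(y)=\int p_t(x,y)\,d\mu(x)$ smooth, and $\partial_t\rho_t=L^{*}\rho_t$ (this is how $\mathcal{H}_t$ acts on $\vol_{\sM}$-absolutely continuous measures; moreover $\rho_t$ and $\nabla\rho_t$ are, by smoothness of the heat kernel on the compact $M$, bounded and $\rho_t$ is bounded away from $0$, locally uniformly in $t\in(0,\infty)$), we obtain
\[
\partial_t\rho_t+\operatorname{div}\bigl(\rho_t\,V_t\bigr)=0,\qquad V_t:=Z-\nabla\log\rho_t,
\]
so $|V_t|=|\nabla\log\rho_t-Z|$ and $t\mapsto\int|V_t|^{2}\,d\mathcal{H}_t\mu$ is finite and continuous on $(0,\infty)$.

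\emph{Step 2 (Duality estimate).} Fix $0<t$ and $s$ with $t+s>0$, a bounded Lipschitz $\phi$, and set $\mu_r:=\mathcal{H}_{t+rs}\mu$ for $r\in[0,1]$. Differentiating $r\mapsto\int Q_r\phi\,d\mu_r$, using the Hamilton-Jacobi subsolution property $\partial_rQ_r\phi\le-\tfrac12|\nabla Q_r\phi|^{2}$ of the Hopf-Lax semigroup from Section 2 for the first contribution, and $\partial_r\rho_{t+rs}=s\,L^{*}\rho_{t+rs}$ together with integration by parts against $\operatorname{div}(\rho_{t+rs}V_{t+rs})$ for the second, one gets for a.e. $r$
\[
\frac{d}{dr}\int Q_r\phi\,d\mu_r\le\int\Bigl(-\tfrac12|\nabla Q_r\phi|^{2}+s\,\langle\nabla Q_r\phi,\,V_{t+rs}\rangle\Bigr)\,d\mu_r\le\frac{s^{2}}{2}\int|\nabla\log\rho_{t+rs}-Z|^{2}\,d\mu_r,
\]
the last step using $-\tfrac12|a|^{2}+s\langle a,w\rangle=-\tfrac12|a-sw|^{2}+\tfrac{s^{2}}{2}|w|^{2}\le\tfrac{s^{2}}{2}|w|^{2}$. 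Integrating in $r\in[0,1]$, recalling $Q_0\phi=\phi$, and taking the supremum over $\phi$, Kantorovich duality $\tfrac12W_2(\nu_0,\nu_1)^{2}=\sup_\phi\{\int Q_1\phi\,d\nu_1-\int\phi\,d\nu_0\}$ yields
\[
W_2(\mathcal{H}_t\mu,\mathcal{H}_{t+s}\mu)^{2}\le s^{2}\int_0^1\int|\nabla\log\rho_{t+rs}-Z|^{2}\,d\mathcal{H}_{t+rs}\mu\,dr .
\]

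\emph{Step 3 (Conclusion).} By Step 1 the function $g(\tau):=\int|\nabla\log\rho_\tau-Z|^{2}\,d\mathcal{H}_\tau\mu$ is continuous on $(0,\infty)$ and bounded on compact subintervals, so the estimate of Step 2 shows that $t\mapsto\mathcal{H}_t\mu$ is locally Lipschitz, hence locally absolutely continuous, in $(\mathcal{P}^2(\vol_{\sM}),W_2)$; consequently its metric derivative exists for a.e.\ $t$. Dividing the estimate of Step 2 by $s^{2}$ and letting $s\to0$ (both signs), using $\int_0^1 g(t+rs)\,dr\to g(t)$ by continuity of $g$, gives, wherever the limit exists, $|\dot{\mathcal{H}}_t\mu|^{2}\le\int|\nabla\log\rho_t-Z|^{2}\,d\mathcal{H}_t\mu$, which is the claim.

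I expect the main obstacle to be the rigorous justification of the differential inequality in Step 2, i.e.\ that $r\mapsto\int Q_r\phi\,d\mu_r$ is absolutely continuous with the asserted a.e.\ derivative: this requires the Hopf-Lax Hamilton-Jacobi estimate, joint measurability and Lipschitz control of $Q_r\phi$ in $(r,x)$, the smooth time-dependence of $\rho_{t+rs}$, and the legitimacy of moving $L^{*}$ off $\rho$ onto $Q_r\phi$ by integration by parts; the needed bounds on $\rho_\tau$, $\nabla\rho_\tau$ and on $Z$ come from smoothness of the heat kernel and of $Z$ on the compact manifold.
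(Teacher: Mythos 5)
Your proposal is correct and follows essentially the same route as the paper: Kantorovich duality via the Hopf-Lax semigroup, differentiation of $r\mapsto\int Q_r\varphi\,d\mathcal{H}_{t+rs}\mu$ using the Hamilton-Jacobi equation and $\partial_r\rho_{t+rs}=sL^{*}\rho_{t+rs}$, integration by parts to produce $\langle\nabla Q_r\varphi,\nabla\log\rho-Z\rangle$, and Young's inequality, yielding the same bound $W_2^2\le s^2\int_0^1\int|\nabla\log\rho_{t+rs}-Z|^2\,d\mathcal{H}_{t+rs}\mu\,dr$. Your phrasing via the continuity equation with velocity $V_t=Z-\nabla\log\rho_t$ and the supremum form of duality (rather than the paper's choice of an optimal Kantorovich potential) is only a cosmetic repackaging of the same argument.
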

\begin{proof}
Since $\mathcal{H}_t\mu\in\mathcal{P}^2(\vol_{\sM})$ for $t>0$ and $\mu\in\mathcal{P}^2(M)$, it is enough to consider $\mu=\rho\vol_{\sM}\in\mathcal{P}^2(\vol_{\sM})$.
Fix $t,s>0$. By Kantorovich duality there exists a Lipschitz function $\varphi$ such that
\begin{align*}
\frac{1}{2}W_2(\mathcal{H}_t\mu,\mathcal{H}_{t+s}\mu)^2=\int Q_1\varphi d\mathcal{H}_{t+s}\mu-\int\varphi d\mathcal{H}_t\mu.
\end{align*} 
where $r\in[0,1]\rightarrow Q_r$ is the Hopf-Lax semigroup. $r\mapsto Q_r\varphi\in L^2(\vol_{\sM})$ is differentiable and satisfies the Hamilton-Jacobi equation
\begin{align*}
\frac{d}{dr}Q_r\varphi+\frac{1}{2}|\nabla Q_r\varphi|^2=0
\end{align*}
in the sense of \cite{agsheat}.
Since $\mathcal{H}_t\mu=\int p_t(x,\cdot)d\mu(x)=P_t^*\rho\vol_{\sM}$ the curve $t\in (0,\infty)\mapsto P_t^*\rho\in L^2(\vol_{\sM})$ is locally Lipschitz, and therefore $r\in [0,1]\mapsto Q_r\varphi P_{t+rs}^*\rho$ is Lipschitz as well. 
Therefore
\begin{align*}
&\int Q_1\varphi d\mathcal{H}_{t+s}\mu-\int\varphi d\mathcal{H}_t\mu=\int_0^1\frac{d}{dr}\int Q_r\varphi P_{t+rs}^*\rho dr \\
&=-\int_0^1\int\frac{1}{2}|\nabla Q_r\varphi|^2d\mathcal{H}_{t+rs}\mu dr+\int_0^1\int (Q_r\varphi) \frac{d}{dr}P_{t+rs}^*\rho d\vol_{\sM} dr.
\end{align*}
Furthermore, we have
\begin{align*}
\int (Q_r\varphi) \frac{d}{dr}P_{t+rs}^*\rho d\vol_{\sM}&=s\int (Q_r\varphi )L^*P_{t+rs}^*\rho d\vol_{\sM}\\
&=-s\int \langle \nabla Q_r\varphi,\nabla \rho_{t+rs}\rangle d\vol_{\sM}+s\int ZQ_r\varphi P_{t+rs}^*\rho d\vol_{\sM}\\
&=-s\int \langle \nabla Q_r\varphi ,\frac{\nabla \rho_{t+rs}}{\rho_{t+rs}}-Z\rangle \rho_{t+rs}d\vol_{\sM}.
\end{align*}
Now, recall that the inequality
$
-\langle\nabla g,\nabla \tilde{g}\rangle\leq \frac{1}{2s}|\nabla g|^2+\frac{s}{2}|\nabla\tilde{g}|^2.
$ Hence
\begin{align*}
\int (Q_r\varphi) \frac{d}{dr}P_{t+rs}^*\rho d\vol_{\sM}&\leq \frac{1}{2}\int|\nabla Q_r\varphi|^2d\mathcal{H}_{t+rs}\mu+\frac{s^2}{2}\int |\nabla\log\rho_{t+rs}-Z|^2d\mathcal{H}_{t+rs}\mu
\end{align*}
and consequently
\begin{align}\label{hmh}
\int Q_1\varphi d\mathcal{H}_{t+s}\mu-\int\varphi d\mathcal{H}_t\mu&\leq \frac{s^2}{2}\int_0^1\int |\nabla\log\rho_{t+rs}-Z|^2d\mathcal{H}_{t+rs}\mu dr.
\end{align}
By smoothness of the heat kernel $p_t$, $t\mapsto \mathcal{H}_{t+rs}\mu\in\mathcal{P}^2(M)$ is locally Lipschitz. Moreover, (\ref{metricspeed}) easily follows from (\ref{hmh}).
\end{proof}
\begin{proposition}\label{propA}
Let $(M,g_{\sM},\vol_{\sM})$, $Z$ and $\mathcal{H}_t\mu$ be as above. Then 
\begin{align*}
\frac{1}{2}\frac{d}{ds}W^2_2(\mathcal{H}_s\mu,\nu)+\frac{K}{2}W^2_2(\mathcal{H}_s\mu,\nu)\leq \int_0^1\int \alpha(\dot{\gamma})d\Pi^s(\gamma)dt + \Ent(\nu)-\Ent(\mathcal{H}_s\mu).
\end{align*}
where $\nu\in \mathcal{P}^2(M)$, $\Pi^t$ is the dynamical optimal plan between $\mathcal{H}_t\mu$ and $\nu$, and $\alpha$ is the co-vectorfield that corresponds to $Z$.
\end{proposition}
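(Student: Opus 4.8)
The plan is to verify this evolution variational inequality by the classical action-comparison argument, which realizes $s\mapsto\mathcal{H}_s\mu$ as the $L^2$-Wasserstein gradient flow of the modified entropy $\mu\mapsto\Ent(\mu)-(\text{line integral of }\alpha)$, feeding in Kuwada's Lemma (Proposition \ref{kuwadaslemma}) together with the $K$-convexity of that functional along geodesics furnished by $CD(K,\infty)$ --- equivalently $\ric_{\sM,\sZ}^{\infty}\ge Kg_{\sM}$, by Theorem \ref{maintheorem}; recall that $N=\infty$ throughout this section. First I would record the structural facts about the flow. We may assume $\Ent(\nu)<\infty$ (otherwise the right-hand side is $+\infty$) and, after a routine truncation-and-rescaling approximation of $\nu$ on the compact manifold $M$, that $\nu$ has bounded density. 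By Proposition \ref{kuwadaslemma}, $s\in(0,\infty)\mapsto\mathcal{H}_s\mu=\rho_s\vol_{\sM}\in\mathcal{P}^2(\vol_{\sM})$ is locally absolutely continuous, and since $\rho_s(y)=\int p_s(x,y)\,d\mu(x)$ is smooth and strictly positive it solves the Fokker--Planck continuity equation
\begin{align*}
\partial_s\rho_s+\operatorname{div}(\rho_s v_s)=0,\qquad v_s:=Z-\nabla\log\rho_s,
\end{align*}
with $\int_M|v_s|^2\,d\mathcal{H}_s\mu<\infty$ for every $s>0$ --- precisely the quantity bounding $|\dot{\mathcal{H}}_s\mu|^2$ in (\ref{metricspeed}). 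Because $\mathcal{H}_s\mu$ has smooth and $\nu$ bounded density and the space is generalized smooth (Definition \ref{gsmms}), the dynamical optimal plan $\Pi^s$ between $\mathcal{H}_s\mu$ and $\nu$ is unique and has bounded compression, so it is the very plan along which $CD(K,\infty)$ holds; I parametrize it so that $(e_0)_\star\Pi^s=\nu$, $(e_1)_\star\Pi^s=\mathcal{H}_s\mu$, and let $\Phi^s$ be a Kantorovich potential realizing the optimal map from $\mathcal{H}_s\mu$ to $\nu$, so that $W_2^2(\mathcal{H}_s\mu,\nu)=\int|\nabla\Phi^s|^2\,d\mathcal{H}_s\mu$ and $\dot\gamma(1)=\nabla\Phi^s|_{\gamma(1)}$ for $\Pi^s$-a.e.\ $\gamma$.

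The core step is a first-variation identity linking the derivative of the squared Wasserstein distance to the endpoint slope of the modified entropy along $\Pi^s$. On one hand, the standard formula for the derivative of $W_2^2$ along an absolutely continuous curve solving a continuity equation (see e.g.\ \cite{viltot}), applicable thanks to the $L^2(\mathcal{H}_s\mu)$-bound on $v_s$ from Kuwada's Lemma, gives for a.e.\ $s>0$
\begin{align*}
\frac{1}{2}\frac{d}{ds}W_2^2(\mathcal{H}_s\mu,\nu)=\int_M\langle v_s,\nabla\Phi^s\rangle\,d\mathcal{H}_s\mu.
\end{align*}
On the other hand, writing $\mu^s_t=(e_t)_\star\Pi^s$ and $\phi_t(\Pi^s)=\int\!\int_0^t\alpha(\dot\gamma)(\tau)\,d\tau\,d\Pi^s(\gamma)$, the classical first-variation formulas along the geodesic $\Pi^s$ --- namely $\frac{d}{dt}\big|_{t=1}\Ent(\mu^s_t)=\int\langle\nabla\log\rho_s,\nabla\Phi^s\rangle\,d\mathcal{H}_s\mu$ (legitimate since $\rho_s$ is smooth and positive) and $\frac{d}{dt}\big|_{t=1}\phi_t(\Pi^s)=\int\alpha(\dot\gamma(1))\,d\Pi^s=\int\langle Z,\nabla\Phi^s\rangle\,d\mathcal{H}_s\mu$ --- combine to
\begin{align*}
\frac{d}{dt}\Big|_{t=1}\big[\Ent(\mu^s_t)-\phi_t(\Pi^s)\big]=\int_M\langle\nabla\log\rho_s-Z,\nabla\Phi^s\rangle\,d\mathcal{H}_s\mu=-\frac{1}{2}\frac{d}{ds}W_2^2(\mathcal{H}_s\mu,\nu).
\end{align*}

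Then I would close the argument with the curvature hypothesis. Since $(M,g_{\sM},\vol_{\sM},Z)$ satisfies $CD(K,\infty)$, Definition \ref{defB} gives that $t\mapsto H^s(t):=\Ent(\mu^s_t)-\phi_t(\Pi^s)$ is $K$-convex on $[0,1]$, and a $K$-convex function satisfies the endpoint estimate $(H^s)'(1^-)\ge H^s(1)-H^s(0)+\frac{K}{2}W_2^2(\mathcal{H}_s\mu,\nu)$ (divide the defining inequality by $1-t$ and let $t\uparrow1$). Inserting $H^s(0)=\Ent(\nu)$, $H^s(1)=\Ent(\mathcal{H}_s\mu)-\phi_1(\Pi^s)$ and the identity above yields
\begin{align*}
\frac{1}{2}\frac{d}{ds}W_2^2(\mathcal{H}_s\mu,\nu)+\frac{K}{2}W_2^2(\mathcal{H}_s\mu,\nu)\le\Ent(\nu)-\Ent(\mathcal{H}_s\mu)+\phi_1(\Pi^s),
\end{align*}
which, since $\phi_1(\Pi^s)=\int_0^1\int\alpha(\dot\gamma)\,d\Pi^s(\gamma)\,dt$, is exactly the claim when $\nu$ has bounded density. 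The general absolutely continuous case then follows by approximation --- using lower semicontinuity of $\Ent$, stability of $W_2$ and of optimal dynamical plans, and continuity of the line integral under a bounded-compression bound --- most conveniently after integrating the inequality over a compact subinterval $[s_0,s_1]\subset(0,\infty)$.

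The part I expect to be the main obstacle is making the first-variation identity fully rigorous: one has to establish a.e.\ differentiability of $s\mapsto W_2^2(\mathcal{H}_s\mu,\nu)$ (which follows from local absolute continuity of the curve), control the regularity of the Kantorovich potential $\Phi^s$ and of the optimal map and geodesic entering the formula, and justify exchanging derivative and integral in the first-variation formulas for $\Ent$ and for $\phi_t(\Pi^s)$ --- all standard on a compact Riemannian manifold but needing careful assembly, in particular the identification $\dot\gamma(1)=\nabla\Phi^s$ at $\mathcal{H}_s\mu$-almost every point. A secondary technical point is the approximation removing the bounded-density hypothesis on $\nu$, where one must ensure that the bounded-compression constants of the approximating geodesics stay under control.
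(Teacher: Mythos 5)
Your argument is correct and follows essentially the same route as the paper's proof: Kuwada's Lemma for absolute continuity, a first-variation computation at the $\mathcal{H}_s\mu$-endpoint of the optimal geodesic linking the slope of $t\mapsto\Ent(\mu^s_t)-\phi_t(\Pi^s)$ to the derivative of $W_2^2(\mathcal{H}_s\mu,\nu)$ through the Kantorovich potential, and finally the $K$-convexity supplied by $CD(K,\infty)$ to produce the evolution variational inequality. The only (immaterial) difference is that the paper works with one-sided estimates — bounding the endpoint derivative of the modified entropy from below as in Theorem 6.5 of \cite{agmr} and the derivative of $W_2^2$ from above via Kantorovich duality and the dual generator $L^*$ — whereas you assert exact first-variation identities (and reverse the parametrization of the geodesic), which is stronger than what is actually needed.
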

\begin{proof} We set $\mathcal{H}_t\mu=\mu_t=\rho_t\vol_{\sM}$, and let $(\mu_{t,r})_{r\in[0,1]}$ be the $L^2$-Wasserstein geodesic between $\mu_t=\mu_{t,0}$  and $\nu=\mu_{t,1}\in \mathcal{P}^2(M)$. 
Let $\Pi^t\in \mathcal{P}(\mathcal{G}(M))$ be the corresponding optimal dynamical plan. By the McCann-Brenier theorem \cite{mccann} we know that $\Pi^t$ and $(\mu_{t,r})_{r\in[0,1]}$ are unique and 
$\mu_{t,r}=\rho_{t,r}\vol_{\sM}\in\mathcal{P}^2(\vol_{\sM})$ for $r\in [0,1)$. By the semigroup property of $p_t$ it is clear that $\mathcal{H}_t\mu=P_{t/2}^*\rho_{t/2}\vol_{\sM}$. 
Therefore, without restriction we can assume $\mu\in\mathcal{P}^2(\vol_{\sM})$.
Consider
\begin{align*}
\Ent(\mu_{\tau})-\int \int_0^{\tau}\langle Z|_{\gamma(r)},\dot{\gamma}(r)\rangle dr d\Pi^t=:\mathcal{S}_{\tau}(\Pi).
\end{align*}
Let $\varphi^t<\infty$ be a Kantorovich potential for the geodesic $(\mu_{t,r})_{r\in [0,1]}$. Recall that a Kantorovich potential $\varphi^t<\infty$ on $X$ is Lipschitz.
Moreover, $\rho_{t,0}\in C^{\infty}(X)$ since $p_t$ is smooth.
We compute exactly as in the proof of Theorem 6.5 of \cite{agmr}
\begin{align}\label{dada}
\frac{d}{d\tau}\Big|_{\tau=0}\mathcal{S}_{\tau}(\Pi^t)&=\frac{d}{d\tau}\Big|_{\tau=0}\Ent(\mu_{t,\tau})-\frac{d}{d\tau}\Big|_{\tau=0}\int \int_0^{\tau}\langle Z|_{\gamma_r}, \dot{\gamma}_r\rangle dr d\Pi^t\nonumber\\
&\geq - \int\langle\nabla \varphi^t,\nabla \rho_{t,0}\rangle d\vol_{\sM}+\int \langle Z,\nabla\varphi^t\rangle \rho_{t,0}d\vol_{\sM}.
\end{align}
Note again that $\gamma_t=-\nabla\phi(\gamma_0)$ for $\Pi$-a.e. $\gamma\in\mathcal{G}(M)$.
On the other hand, we can compute the derivative of the $L^2$-Wasserstein distance along $\mathcal{H}_t\mu$. First, by Kantorovich duality we have
\begin{align*}
\frac{1}{2}W_2(\mathcal{H}_t\mu,\nu)^2=\int \varphi^t d\mathcal{H}_t\mu+\int Q_1(-\varphi^t) d\nu
\end{align*}
and 
\begin{align*}
\frac{1}{2}W_2(\mathcal{H}_{t-h}\mu,\nu)^2\geq \int \varphi^t d\mathcal{H}_{t-h}\mu+\int Q_1(-\varphi^t) d\nu
\end{align*}
Note, that we have $Q_1(-\varphi)=\varphi^c$ (we changed the sign convention for this proof).
Then, for $t>0$ and $h>0$ we have
\begin{align*}
&\liminf_{h\downarrow 0}\frac{1}{h}\left(W_2(\mathcal{H}_{t-h}\mu,\nu)^2-W_2(\mathcal{H}_t\mu,\nu)^2\right)\\
&\geq\liminf_{h\downarrow 0}\frac{1}{h}\left[\int \varphi^t d\mathcal{H}_{t-h}\mu-\int \varphi^t d\mathcal{H}_t\mu\right]\\
&=\lim_{h\downarrow 0}\frac{1}{h}\left[\int \varphi^t P^*_{t-h}\rho_{0}d\vol_{\sM}-\int \varphi^t P^*_{t}\rho_{0}d\vol_{\sM}\right]\\
&=-\int \varphi^t\frac{d}{dt}P^*_{t}\rho_{0} d\vol_{\sM}=-\int \varphi^tL^*P_t^*\rho_{0} d\vol_{\sM}\\
&=\int\langle\nabla\varphi^t,\nabla\rho_{t,0} \rangle d\vol_{\sM}-\int\langle Z|_{x},\nabla\varphi^t(x)\rangle \rho_{t,0}(x)d\vol_{\sM}.
\end{align*}
The last equality follows from the relation between $\mathcal{E}$ and its dual generator $L^*$ \cite{maroeckner}.
Therefore $$\frac{d^-}{ds}\Big|_{s=t}W_2(\mathcal{H}_t\mu,\nu)^2\leq -\int\langle\nabla\varphi^t,\nabla\rho_{t,0} \rangle d\vol_{\sM}+\int\langle Z|_{x},\nabla\varphi^t(x)\rangle \rho_{t,0}(x)d\vol_{\sM},$$
where $\frac{d^-}{ds}\Big|_{s=t}f(t)=\limsup\limits_{h\uparrow 0}\frac{1}{h}[f(t+h)-f(t)]$. Together with (\ref{dada}) and since $t\mapsto \mathcal{H}_t\mu$ is absolutely continuous, it follows
\begin{align}\label{pretty}
\frac{d}{d\tau}\Big|_{\tau=0}\mathcal{S}_{\tau}(\Pi)&\geq \frac{d}{ds}\Big|_{s=t}W_2(\mathcal{H}_s\mu,\nu)^2
\end{align}
The curvature-dimension condition implies that for $\mathcal{H}_t\mu$, $\nu$ and $\Pi$ with $(e_r)_{\star}\Pi=\mu_r$ 
\begin{align*}
\Ent(\mu_t)-\phi_{t}(\Pi)\leq (1-t)\Ent(\mu_0)&+t\left[\Ent(\mu_1)-\phi_{1}(\Pi)\right]
\\
&\hspace{1cm}
-\frac{1}{2}Kt(1-t)KW_2(\mu_0,\mu_1)^2,
\end{align*}
where $\phi_{t}(\Pi)=\int \phi_t(\gamma)d\Pi(\gamma)$. Now we substract 
$\Ent(\mu_0)$, devide by $t>0$, and let $t\rightarrow 0$. We obtain
\begin{align*}
\frac{d}{d\tau}\Big|_0 S_{\tau}(\Pi)\leq -\Ent(\mathcal{H}_t\mu)+\left[\Ent(\nu)-\phi_{1}(\Pi)\right]-\frac{1}{2}K W_2(\mathcal{H}_t\mu,\nu)^2
\end{align*}
Together with (\ref{pretty}) this implies the result.
\end{proof}
\begin{definition}
Let $\mms$ be a metric measure space, and let $Z$ be an $L^2$-integrable vector field.
Let $K\in\mathbb{R}$ and $(\mu_t)_{t\in (0,\infty)}\subset \mathcal{P}^2(\vol_{\sM})$ be locally absolutely continuous. We say $\mu_t$ is an $EVI_{K,\infty}$-flow curve of $Z$ starting in $\mu_0\in\mathcal{P}^2(X)$ if 
$\lim_{t\rightarrow 0}\mu_t=\mu_0$, and 
for every $\nu\in \mathcal{P}^2(\vol_{\sM})$ the \textit{evolution variational inequality} 
\begin{align*}
\frac{1}{2}\frac{d}{ds}W^2_2(\mathcal{H}_s\mu,\nu)+\frac{K}{2}W^2_2(\mathcal{H}_s\mu,\nu)\leq -\int_0^1\int Z(\dot{\gamma})d\Pi^s(\gamma)dr + \Ent(\nu)-\Ent(\mathcal{H}_s(\mu)).
\end{align*}
holds for a.e. $t>0$ where $\Pi^s$ is the dynamical optimal plan between $\mathcal{H}_s\mu$ and $\nu$.
\end{definition}
\begin{corollary} 
Assume $(M,g_{\sM},\vol_{\sM},Z)$ as above satisfies the condition $CD(K,N)$. Then
$t\mapsto \mathcal{H}_t\mu$ is an $EVI_{K,\infty}$ gradient flow curve for every $\mu\in\mathcal{P}^2(M)$.
\end{corollary}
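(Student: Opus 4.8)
The statement is the synthesis of Proposition~\ref{kuwadaslemma} and Proposition~\ref{propA}, and the plan is to assemble these two ingredients and dispatch the remaining minor points. Since $M$ is compact the measure $\vol_{\sM}$ is finite, so by the elementary implications $CD(K,N)\Rightarrow CD^*(K,N)\Rightarrow CD(K,\infty)$ recorded after Definition~\ref{defB} we may assume the entropy form of the curvature-dimension condition throughout, equivalently $\ric_{\sM,\sZ}^{\infty}\geq Kg_{\sM}$ by Theorem~\ref{maintheorem}; this is precisely the hypothesis under which Proposition~\ref{propA} is stated. Proposition~\ref{kuwadaslemma} then gives that for every $\mu\in\mathcal{P}^2(M)$ the curve $t\in(0,\infty)\mapsto\mathcal{H}_t\mu$ lies in $\mathcal{P}^2(\vol_{\sM})$ and is locally absolutely continuous; in particular $s\mapsto W_2(\mathcal{H}_s\mu,\nu)^2$ is locally absolutely continuous for each $\nu$, hence differentiable for a.e.\ $s>0$.

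Next I would check the initial condition $\lim_{t\to 0}\mathcal{H}_t\mu=\mu$ required in the definition of an $EVI_{K,\infty}$-flow curve. Since $X$ is compact, $W_2$-convergence coincides with weak convergence, and $\int\phi\,d\mathcal{H}_t\mu=\int P_t\phi\,d\mu\to\int\phi\,d\mu$ for every $\phi\in C_b(X)$ by strong continuity of the semigroup $P_t$ on $C(M)$ (equivalently, $p_t(x,\cdot)\vol_{\sM}\rightharpoonup\delta_x$ as $t\to 0$). Hence $\mathcal{H}_t\mu\to\mu$ in $\mathcal{P}^2(X)$, and the curve indeed emanates from the prescribed, possibly singular, datum $\mu$.

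Finally, Proposition~\ref{propA}, after identifying the $1$-form $\alpha$ with the vector field $Z$ via $\alpha=\langle Z,\cdot\rangle$ and reading off the orientation convention for the line integral along $\Pi^s$, is exactly the evolution variational inequality in the definition of an $EVI_{K,\infty}$-flow curve; since its left-hand side is differentiable for a.e.\ $s>0$, the one-sided derivative estimate furnished by Proposition~\ref{propA} yields the genuine (two-sided) inequality at a.e.\ such $s$. Combining the three paragraphs shows that $t\mapsto\mathcal{H}_t\mu$ is an $EVI_{K,\infty}$-flow curve of $Z$ starting at $\mu$ for every $\mu\in\mathcal{P}^2(M)$. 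Everything here is a repackaging of the two propositions; the only genuinely non-formal point is the $t\to 0$ limit with a singular initial datum, and the step most prone to error is keeping the sign of the line-integral term consistent between the two statements.
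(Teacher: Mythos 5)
Your proposal is correct and follows essentially the same route as the paper, which states the corollary without proof precisely because it is the immediate combination of Proposition \ref{kuwadaslemma} (local absolute continuity of $t\mapsto\mathcal{H}_t\mu$ in $\mathcal{P}^2(\vol_{\sM})$, hence a.e.\ differentiability of $s\mapsto W_2^2(\mathcal{H}_s\mu,\nu)$) with Proposition \ref{propA} (the evolution variational inequality), your added checks — the reduction to $CD(K,\infty)$ via finiteness of $\vol_{\sM}$ (or via Theorem \ref{maintheorem} and monotonicity in $N$) and the initial condition $\mathcal{H}_t\mu\to\mu$ as $t\to0$ from the Feller property of $P_t$ — being exactly the details the paper leaves implicit. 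The sign mismatch you flag between the line-integral term in Proposition \ref{propA} and in the definition of an $EVI_{K,\infty}$-flow curve is an orientation convention/typo in the paper itself, and treating it as such is the correct reading.
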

\noindent
\noindent
The following contraction estimate for nonsymmetric diffusions was known before.
\begin{corollary}\label{contractionxxx}
Let $(M,g_{\sM},\vol_{\sM})$ and $Z$ be as before. Then 
\begin{align*}
W_2(\mathcal{H}_t\mu,\mathcal{H}_t\nu)^2\leq e^{-2Kt}W_2(\mu,\nu)^2.
\end{align*}
for $\mu,\nu\in \mathcal{P}^2(M)$.
\end{corollary}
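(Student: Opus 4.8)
The plan is to obtain the contraction estimate from the evolution variational inequality of Proposition~\ref{propA} by the standard doubling-of-variables argument for $EVI$-flows, exploiting the fact that the two ``extra'' terms appearing in Proposition~\ref{propA} — the line integral $\int_0^1\!\int\alpha(\dot\gamma)\,d\Pi^s(\gamma)\,dr$ and the entropy difference $\Ent(\nu)-\Ent(\mathcal{H}_s\mu)$ — are antisymmetric under exchanging the two measures together with reversing the connecting geodesics, and hence cancel when one adds the inequality to its ``mirror image''. This is essentially the argument of Kuwada \cite{kuwadaduality}, adapted to the present nonsymmetric setting.

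Concretely, I would first record that by Proposition~\ref{kuwadaslemma} both $s\mapsto\mathcal{H}_s\mu$ and $s\mapsto\mathcal{H}_s\nu$ are locally absolutely continuous curves in $\mathcal{P}^2(\vol_{\sM})$, and that for $s>0$ they have smooth positive densities, so $\Ent$ is finite along both; in particular $\varphi(t):=W_2(\mathcal{H}_t\mu,\mathcal{H}_t\nu)^2$ is locally absolutely continuous on $(0,\infty)$. Fixing $t>0$, I apply Proposition~\ref{propA} once with the flow $\mathcal{H}_s\mu$ and fixed target $\mathcal{H}_t\nu$, and once with the flow $\mathcal{H}_s\nu$ and fixed target $\mathcal{H}_t\mu$, each evaluated at $s=t$. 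The dynamical optimal plan from $\mathcal{H}_t\nu$ to $\mathcal{H}_t\mu$ is the time-reversal $\gamma\mapsto\gamma(1-\,\cdot\,)$ of the one from $\mathcal{H}_t\mu$ to $\mathcal{H}_t\nu$, under which $\int_0^1\alpha(\dot\gamma)\,dr$ changes sign, while the entropy difference simply changes sign as well. Adding the two inequalities, these terms drop out and one is left with
\begin{align*}
\tfrac12\Big(\tfrac{d}{ds}\big|_{s=t}W_2(\mathcal{H}_s\mu,\mathcal{H}_t\nu)^2+\tfrac{d}{ds}\big|_{s=t}W_2(\mathcal{H}_t\mu,\mathcal{H}_s\nu)^2\Big)+K\varphi(t)\leq 0.
\end{align*}

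To finish, I would invoke the chain rule for the squared $W_2$-distance between two locally absolutely continuous curves (as in \cite{agsheat}) to identify, for a.e.\ $t$, the genuine derivative $\varphi'(t)$ with the sum of the two partial derivatives above, giving $\varphi'(t)\leq -2K\varphi(t)$ for a.e.\ $t>0$; Grönwall's inequality then yields $\varphi(t)\leq e^{-2Kt}\lim_{s\downarrow 0}\varphi(s)$, and since $M$ is compact $\mathcal{H}_s\mu\to\mu$ and $\mathcal{H}_s\nu\to\nu$ narrowly, hence in $\mathcal{P}^2(X)$, so $\lim_{s\downarrow 0}\varphi(s)=W_2(\mu,\nu)^2$. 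The main obstacle is the technical bookkeeping in this last stage rather than any genuine new idea: one must pass from the one-sided (upper left) derivative estimates actually produced by Proposition~\ref{propA} to a pointwise a.e.\ differential inequality for the diagonal curve $t\mapsto\varphi(t)$ — which requires either upgrading the ``a.e.\ $s$'' EVI to an integral/semicontinuous form before differentiating, or appealing to the Daneri–Savaré-type chain-rule machinery — whereas the cancellation of the nonsymmetric correction terms, which is the only point specific to this paper, is immediate from time-reversal symmetry.
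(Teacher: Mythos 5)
Your proposal is correct and takes essentially the same approach as the paper: the doubling-of-variables EVI argument, with the line-integral and entropy terms cancelling because the optimal dynamical plan between the two absolutely continuous measures is unique and its time-reversal flips the sign of $\int_0^1\alpha(\dot\gamma)\,dr$, followed by Gronwall. The diagonal-derivative technicality you flag is handled in the paper exactly by the first option you mention — integrating the EVI over $[t,s]$ with frozen targets, adding the two inequalities so the cross terms cancel, dividing by $s-t$ and letting $s\to t$ — so no additional chain-rule machinery is needed.
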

\begin{proof} First, note that by the previous corollary and Lemma 4.3.4 in \cite{agsgradient} the function $t\in (0,\infty)\mapsto W_2(\mathcal{H}_t\mu,\mathcal{H}_t\nu)^2$ is 
locally absolutely continuous, and therefore differentiable almost everywhere. 
\smallskip\\
Let $t<s$.
Integrating the $EVI_K$ inequality from $t$ to $s$ with $\nu=\mathcal{H}_s\mu$ yields
\begin{align*}
&\frac{1}{2}W^2_2(\mathcal{H}_s\mu,\mathcal{H}_s\nu)-\frac{1}{2}W^2_2(\mathcal{H}_t\mu,\mathcal{H}_s\nu)+\int_t^s\frac{K}{2}W^2_2(\mathcal{H}_{\tau}\mu,\mathcal{H}_s\nu)d\tau\\
&\leq -\int_t^s\int_0^1\int Z(\dot{\gamma})d\Pi^{\tau}(\gamma)dr d\tau + (s-t)\Ent(\mathcal{H}_s\nu)-\int_t^s\Ent(\mathcal{H}_{\tau}(\mu))d\tau.
\end{align*}
where $\Pi^{\tau}$ is the geodesic between $\mathcal{H}_{\tau}\mu$ for $\tau\in [t,s]$ and $\mathcal{H}_s\nu$,
and similar if we set $\nu=\mathcal{H}_t\mu$, we obtain
\begin{align*}
&\frac{1}{2}W^2_2(\mathcal{H}_s\mu,\mathcal{H}_t\nu)-\frac{1}{2}W^2_2(\mathcal{H}_t\mu,\mathcal{H}_t\nu)+\int_t^s\frac{K}{2}W^2_2(\mathcal{H}_{t}\mu,\mathcal{H}_{\tau}\nu)d\tau\\
&\leq -\int_t^s\int_0^1\int Z(\dot{\gamma})d\Pi^{\tau,-}(\gamma)dr d\tau + (s-t)\Ent(\mathcal{H}_t\mu)-\int_t^s\Ent(\mathcal{H}_{\tau}(\nu))d\tau.
\end{align*}
where $\Pi^{\tau,-}$ is the geodesic between $\mathcal{H}_{\tau}\nu$ for $\tau\in [t,s]$ and $\mathcal{H}_t\mu$. Adding the previous inequalities from each other yields
\begin{align*}
&\frac{1}{2}W^2_2(\mathcal{H}_s\mu,\mathcal{H}_s\nu)-\frac{1}{2}W^2_2(\mathcal{H}_t\mu,\mathcal{H}_t\nu)+\frac{K}{2}\int_t^s\left[W^2_2(\mathcal{H}_{t}\mu,\mathcal{H}_{\tau}\nu)+W^2_2(\mathcal{H}_{\tau}\mu,\mathcal{H}_s\nu)\right]d\tau\\
&\leq -\int_t^s\int_0^1\int Z(\dot{\gamma})d\Pi^{\tau}(\gamma)dt d\tau-\int_t^s\int_0^1\int Z(\dot{\gamma})d\Pi^{\tau,-}(\gamma)dt d\tau\\
&\hspace{2cm}+(s-t)(\Ent(\mathcal{H}_s\mu)+\Ent(\mathcal{H}_t\mu))-2\int_t^s\Ent(\mathcal{H}_{\tau}(\nu))d\tau.
\end{align*}
Deviding by $(s-t)$, and letting $s\rightarrow t$ yields
\begin{align}\label{seethelastline}
\frac{d}{d\tau}\Big|_{\tau=t}\frac{1}{2}W^2_2(\mathcal{H}_{\tau}\mu,\mathcal{H}_{\tau}\nu)&\leq - K W^2_2(\mathcal{H}_{t}\mu,\mathcal{H}_{t}\nu)\nonumber\\
&-\int_0^1\left(\int Z(\dot{\gamma})d\Pi^{t}(\gamma)+\int Z(\dot{\gamma})d\Pi^{t,-}(\gamma)\right)dt.
\end{align}
Since geodesic between absolutely continuous probability measures are unique, and since $\Pi^{\tau,-}$ coincides with $\Pi^{\tau}$ up to reverse parametrization, we have for $\Psi(\gamma)=\gamma^-$
$$\int Z(\dot{\gamma})d\Pi^{t}(\gamma)=\int Z(\dot{\gamma}^-)d\Psi_{\star}\Pi^{t}(\gamma)=-\int Z(\dot{\gamma})d\Pi^{t,-}(\gamma)$$ and the last line in (\ref{seethelastline}) vanishes. 
Hence
\begin{align*}
\frac{d}{dt}\Big|_{t=0}W_2(\mathcal{H}_t\mu,\mathcal{H}_t\nu)^2\leq - 2KW_2(\mathcal{H}_t\mu,\mathcal{H}_t\nu)^2.
\end{align*}
Finally, Gromwall's lemma yields the claim.
\end{proof}

\begin{corollary}[Kuwada, \cite{kuwadaduality}]
For $f\in D(\mathcal{E})$, we have
\begin{align*}
\left|\nabla P_t f\right|^2\leq e^{-2K}P_t\left|\nabla f\right|^2.
\end{align*}
Moreover, $P_t$ satisfies the Bakry-Emery curvature-dimension condition $BE(K,\infty)$.
\end{corollary}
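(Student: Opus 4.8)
The plan is to deduce the gradient estimate from the $W_2$-contraction of Corollary~\ref{contractionxxx} by the Kuwada duality argument \cite{kuwadaduality}, and then to recognise the resulting pointwise bound as the semigroup form of $BE(K,\infty)$.

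For the gradient estimate I would first treat a Lipschitz $f$ and a fixed $t>0$, and bound $|\nabla P_tf(x)|$ at an arbitrary $x\in M$. Let $v\in T_xM$ be a unit vector and $s\mapsto\gamma(s)$ the geodesic with $\gamma(0)=x$, $\dot\gamma(0)=v$. Since $\mathcal{H}_t$ is dual to $P_t$ and $p_t$ is a smooth positive kernel, $\sigma_s:=\mathcal{H}_t\delta_{\gamma(s)}=p_t(\gamma(s),\cdot)\vol_{\sM}$ is a curve in $\mathcal{P}^2(\vol_{\sM})$ with $\int\phi\,d\sigma_s=P_t\phi(\gamma(s))$; in particular $\int|\nabla f|^2\,d\sigma_0=P_t(|\nabla f|^2)(x)$, and the derivative of $s\mapsto\int f\,d\sigma_s$ at $s=0$ equals $\langle\nabla P_tf(x),v\rangle$. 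By Corollary~\ref{contractionxxx}, $W_2(\sigma_s,\sigma_{s'})\le e^{-Kt}\de(\gamma(s),\gamma(s'))$, so $\sigma_s$ is absolutely continuous in $\mathcal{P}^2$ with metric speed $|\dot\sigma_0|\le e^{-Kt}$; writing $w_0$ for its minimal velocity field at $s=0$, so that $\|w_0\|_{L^2(\sigma_0)}=|\dot\sigma_0|$ and the above derivative equals $\int\langle\nabla f,w_0\rangle\,d\sigma_0$, the Cauchy--Schwarz inequality gives
\begin{align*}
|\langle\nabla P_tf(x),v\rangle|\le\Big(\int|\nabla f|^2\,d\sigma_0\Big)^{1/2}|\dot\sigma_0|\le e^{-Kt}\big(P_t(|\nabla f|^2)(x)\big)^{1/2}.
\end{align*}
Taking the supremum over unit $v$ yields $|\nabla P_tf|^2\le e^{-2Kt}P_t(|\nabla f|^2)$ for Lipschitz $f$, and since $P_tf$ is smooth for $t>0$ and Lipschitz functions are dense in $D(\mathcal{E})=W^{1,2}(X)$, the estimate extends to all $f\in D(\mathcal{E})$. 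This is precisely Kuwada's duality theorem applied with conjugate exponents $p=q=2$; its hypotheses hold because $M$ is compact with smooth data, $\mathcal{H}_t$ maps $\mathcal{P}^2(X)$ into $\mathcal{P}^2(\vol_{\sM})$, and the absolute continuity and metric-speed input is exactly Proposition~\ref{kuwadaslemma} (Kuwada's Lemma).

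For the second assertion I would invoke the standard equivalence between $BE(K,\infty)$ and the gradient bound $\Gamma(P_tf)\le e^{-2Kt}P_t\Gamma(f)$ with $\Gamma(f)=|\nabla f|^2$. For the direction needed here, fix $f$ and set $h(t):=e^{-2Kt}P_t\Gamma(f)-\Gamma(P_tf)$; the gradient estimate just proved says $h\ge0$, while $h(0)=0$, hence $0\le h'(0)=2\big(\Gamma_2(f)-K\Gamma(f)\big)$ with $\Gamma_2(f)=\tfrac12 L\Gamma(f)-\Gamma(f,Lf)$, which is $BE(K,\infty)$. (The converse is the usual Gr\"onwall argument along $\psi(s)=P_s(\Gamma(P_{t-s}f))$, using $\tfrac{d}{ds}\psi(s)=2P_s(\Gamma_2(P_{t-s}f))\ge 2K\psi(s)$.) For $L=\Delta+Z$ the Bochner identity gives $\Gamma_2(f)=|\nabla^2f|^2+\ric^{\infty}_{\sM,\sZ}(\nabla f,\nabla f)$, so $BE(K,\infty)$ is equivalent to $\ric^{\infty}_{\sM,\sZ}\ge Kg_{\sM}$; together with Theorem~\ref{maintheorem} this closes the chain of equivalences of the main theorem.

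The main obstacle is the first step, i.e.\ making the duality rigorous: one has to check that $s\mapsto\mathcal{H}_t\delta_{\gamma(s)}$ is genuinely an absolutely continuous curve in $(\mathcal{P}^2,W_2)$ whose metric speed is controlled by the contraction rate, and that differentiating $\int f\,d\sigma_s$ and representing the derivative through the velocity field is legitimate for merely Lipschitz $f$. Both follow from the smoothness and strict positivity of the heat kernel for $t>0$ together with Proposition~\ref{kuwadaslemma}; once this is in place the $L^2$-duality reduces to Cauchy--Schwarz, and the passage from Lipschitz $f$ to $D(\mathcal{E})$ as well as the $BE$ reformulation are routine.
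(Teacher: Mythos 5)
Your proposal is correct and follows essentially the same route as the paper, which obtains this corollary by applying Kuwada's duality theorem to the $W_2$-contraction of Corollary \ref{contractionxxx} (you simply spell out that duality argument, plus the standard differentiation-at-$t=0$ equivalence between the gradient bound and $BE(K,\infty)$, which the paper leaves to the citation \cite{kuwadaduality}). Your exponent $e^{-2Kt}$ is indeed the intended one (the $e^{-2K}$ in the statement is a typo), and the technical point you flag about the velocity-field representation at $s=0$ is harmless here because the kernel $p_t$ is smooth and positive.
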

\begin{remark}
By a classical result of Bakry and Emery (for instance \cite{saintfleur}) the condition $BE(K,\infty)$ again implies $\ric_{\sM,\infty}^{\sZ}\geq K$ and therefore the condition $CD(K,\infty)$.
\end{remark}
\begin{remark}
It is rather obvious how to improve the previous estimates in context of the condition $CD(K,N)$ for $N<\infty$ in the sense of \cite{erbarkuwadasturm} by modifying the computations (see also \cite{kuwadaspacetime}), 
and we omit details. 
\end{remark}
\noindent
We summarize the previous statements in the following theorem. 
\begin{theorem}
Let $(M,g_{\sM})$ be a compact smooth Riemannian manifold, and let $Z$ be an smooth vector field. We denote with $(M,\de_{\sM},\vol_{\sM})$ the corresponding metric measure space, 
and let $P_t$ and $\mathcal{H}_t$ be as above.
Then, the following statements are equivalent.
\begin{itemize}
\smallskip
 \item[(i)]$\ric_{\sM,\infty}^{\sZ}\geq K$, 
\smallskip
 \item[(ii)]$(M,\de_{\sM},\vol_{\sM},Z)$ satisfies the condition $CD(K,\infty)$,
\smallskip
 \item[(iii)]For every $\mu\in\mathcal{P}^2(X)$ $\mathcal{H}_t\mu$ is an $EVI_{K,\infty}$-flow curve starting in $\mu$,
\smallskip
 \item[(iv)]$\mathcal{H}_t$ satisfies the contraction estimate in corollary \ref{contractionxxx},
\smallskip
 \item[(v)]$P_t$ satisfies the condition $BE(K,\infty)$.
\end{itemize}
\end{theorem}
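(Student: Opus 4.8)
The plan is to establish the cycle of implications $(i)\Rightarrow(ii)\Rightarrow(iii)\Rightarrow(iv)\Rightarrow(v)\Rightarrow(i)$, where each arrow is one of the results already assembled in the paper. The equivalence $(i)\Leftrightarrow(ii)$ is exactly Theorem~\ref{maintheorem} applied to the smooth metric measure space $(M,\de_{\sM},\vol_{\sM})$ together with the smooth, $L^2$-integrable vector field $Z$, in the case $N=\infty$; in particular it supplies $(i)\Rightarrow(ii)$, and once the loop is closed it yields the converse $(ii)\Rightarrow(i)$ as well. Throughout, compactness of $M$ is used so that $P_t$ admits a smooth strictly positive heat kernel $p_t$, all entropies below are finite, and $\mathcal{H}_t\mu\in\mathcal{P}^2(\vol_{\sM})$ for every $\mu\in\mathcal{P}^2(M)$ and $t>0$.

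For $(ii)\Rightarrow(iii)$: assuming $CD(K,\infty)$, Proposition~\ref{kuwadaslemma} (Kuwada's Lemma) shows that $t\mapsto\mathcal{H}_t\mu$ is a locally absolutely continuous curve in $\mathcal{P}^2(\vol_{\sM})$ converging to $\mu$ as $t\to0$, and Proposition~\ref{propA} produces, for every $\nu\in\mathcal{P}^2(\vol_{\sM})$ and a.e.\ $t>0$, exactly the evolution variational inequality defining an $EVI_{K,\infty}$-flow curve of $Z$ (here one uses the identification $\alpha=\langle Z,\cdot\rangle$ between the admissible $1$-form entering the $CD$-inequality and the vector field $Z$). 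This is the content of the Corollary following Proposition~\ref{propA}, so $(iii)$ holds.

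For $(iii)\Rightarrow(iv)$ we follow Corollary~\ref{contractionxxx}: integrate the $EVI_{K,\infty}$ inequality for $\mathcal{H}_\cdot\mu$ over an interval $[t,s]$ against $\nu=\mathcal{H}_s\nu$, integrate symmetrically the inequality for $\mathcal{H}_\cdot\nu$ against $\mathcal{H}_t\mu$, add the two, divide by $s-t$ and let $s\to t$; the two transport-line-integral terms cancel because the reversed plan $\Pi^{t,-}$ is $\Pi^t$ run with opposite orientation and $Z(\dot{\gamma}^{-})=-Z(\dot{\gamma})$, which leaves $\frac{d}{dt}W_2^2(\mathcal{H}_t\mu,\mathcal{H}_t\nu)\leq -2KW_2^2(\mathcal{H}_t\mu,\mathcal{H}_t\nu)$, and Gronwall's lemma gives the contraction estimate. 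For $(iv)\Rightarrow(v)$ one invokes the duality theorem of Kuwada \cite{kuwadaduality}: the $W_2$-contraction $W_2(\mathcal{H}_t\mu,\mathcal{H}_t\nu)^2\leq e^{-2Kt}W_2(\mu,\nu)^2$ for the dual flow is equivalent to the pointwise Bakry--Emery gradient estimate $|\nabla P_tf|^2\leq e^{-2Kt}P_t|\nabla f|^2$, i.e.\ to $BE(K,\infty)$ for $P_t$. Finally $(v)\Rightarrow(i)$ is the classical $\Gamma_2$-computation of Bakry--\'Emery (the Remark above; see e.g.\ \cite{saintfleur}): for $L=\Delta+Z$, Bochner's formula gives $\Gamma_2(f)=|\nabla^2 f|^2+(\ric_{\sM}-\nabla^sZ)(\nabla f,\nabla f)$, and the requirement $\Gamma_2(f)\geq K|\nabla f|^2$ for all $f$, tested against functions with a prescribed gradient and arbitrarily small Hessian at a given point, forces $\ric_{\sM,Z}^{\infty}=\ric_{\sM}-\nabla^sZ\geq Kg_{\sM}$.

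The main difficulty here is bookkeeping rather than mathematical: one must carefully track the sign and duality conventions relating the $1$-form $\alpha$ (used in Definition~\ref{defB} and Proposition~\ref{propA}) to the vector field $Z$ (used in $BE(K,\infty)$ and in the Bakry--Emery tensor), and one must route the argument through $(iii)\Rightarrow(iv)\Rightarrow(v)\Rightarrow(i)$ rather than attempting a direct $(iii)\Rightarrow(ii)$, since the $EVI_{K,\infty}$ property a priori only constrains the single curve $\mathcal{H}_t\mu$. Smoothness and strict positivity of the heat kernel, guaranteed by compactness of $M$, is what makes legitimate the differentiations underlying Propositions~\ref{kuwadaslemma} and~\ref{propA} and hence the whole chain.
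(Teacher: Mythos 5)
Your proposal is correct and follows essentially the same route as the paper, which simply assembles its previous results: Theorem~\ref{maintheorem} for (i)$\Leftrightarrow$(ii), Propositions~\ref{kuwadaslemma} and~\ref{propA} with the subsequent corollary for (ii)$\Rightarrow$(iii), Corollary~\ref{contractionxxx} for (iii)$\Rightarrow$(iv), Kuwada's duality for (iv)$\Rightarrow$(v), and the classical Bakry--\'Emery $\Gamma_2$ argument for (v)$\Rightarrow$(i). Your remarks on the $\alpha$--$Z$ identification and on routing through the cycle instead of a direct (iii)$\Rightarrow$(ii) match the paper's implicit logic.
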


\noindent
\small{
\bibliography{new}

\bibliographystyle{amsalpha}}

\end{document}